\def\clb{\color{blue}}
\newcommand \comment[1]{{\fbox{{\color{red}Commented out material}}}}
\def\P{\mathbb P}
\def\E{\mathbb E}
\newtheorem{theorem}{Theorem}[section]
\newtheorem{prop}[theorem]{Proposition}
\newtheorem{cor}[theorem]{Corollary}
\newtheorem{lemma}[theorem]{Lemma}
\theoremstyle{definition}
\newtheorem{definition}[theorem]{Definition}
\newtheorem{remark}[theorem]{Remark}
\numberwithin{equation}{section}
\def\ToM0{\stackrel{{\rm M_0}}{\Longrightarrow}}
\def\toM0{\Rightarrow^{\rm M_0}}
\def\R{\mathbb R}
\title{Tail-dependence, exceedance sets, and metric embeddings}
\author{Anja Jan{\ss}en\footnote{Otto-von-Guericke University Magdeburg, Germany (anja.janssen@ovgu.de)},\quad Sebastian Neblung\footnote{University of Hamburg, Germany (sebastian.neblung@uni-hamburg.de)},\quad Stilian Stoev\footnote{University of Michigan, Ann Arbor, US (sstoev@umich.edu); This author was partially supported by the NSF Grant DMS-1916226.} }
\begin{document}
\parindent0cm

\maketitle

\begin{abstract}

  There are many ways of measuring and modeling tail-dependence in random vectors: from the general framework of multivariate regular variation and the flexible class of max-stable vectors down to simple and concise summary measures like the matrix of bivariate tail-dependence coefficients. This paper starts by providing a review of existing results from a unifying perspective, which highlights connections between extreme value theory and the theory of cuts and metrics. Our approach leads to some new findings in both areas with some applications to current topics in risk management. 
  
  We begin by using the framework of multivariate regular variation to show that extremal coefficients, or equivalently, the higher-order tail-dependence coefficients of a random vector can simply be understood in terms of random exceedance sets, which allows us to extend the notion of Bernoulli compatibility. In the special but important case of bi-variate tail-dependence, we establish a correspondence between tail-dependence matrices and
   $L^1$- and $\ell_1$-embeddable finite metric spaces via the spectral distance, which is a metric on the space 
   of jointly $1$-Fr\'{e}chet random variables. Namely, the coefficients of the cut-decomposition of 
   the spectral distance and of the Tawn-Molchanov max-stable model realizing the 
   corresponding bi-variate extremal dependence coincide. We show that line metrics are rigid and if the spectral distance corresponds to a line metric, the higher order tail-dependence is determined by the bi-variate tail-dependence matrix. 
   
   Finally, the correspondence between $\ell_1$-embeddable metric spaces and tail-dependence matrices allows us
   to revisit the realizability problem, i.e.\ checking whether a given matrix is a valid 
   tail-dependence matrix. We confirm a conjecture of \cite{Shyamalkumar2020} that this problem is NP-complete.

    \medskip \noindent
    \textbf{Keywords: } Bernoulli compatibility, exceedance sets, line metrics, max-stable vectors, metric embedding, multivariate regular variation, NP completeness, tail-dependence (TD) matrix, tail-dependence coefficients, Tawn-Molchanov models, realization problem for TD matrices
    
    \noindent
    \textbf{MSC2020: } primary 60G70; secondary 51K05, 60E05, 68R12, 68Q25 
\end{abstract}

\section{Introduction}\label{sec:Introduction}
Extreme events such as large portfolio losses in insurance and finance, spatial and 
environmental extremes such as heat-waves, floods, electric grid outages, and many other
complex system failures are associated with tail-events.  That is, the simultaneous occurrence 
of extreme values in the components of a possibly very high-dimensional 
vector $X = (X_i)_{1\leq i\leq p}$ of covariates.  
Such simultaneous extremes occur due to \textit{dependence} among the extremes of the $X_i$'s.
This has motivated a large body of literature on modeling and quantifying tail-dependence, see, e.g.\ \citep{coles:2001,finkenstaedt:2003,rachev:2003,beirlant:2004,castillo:2012, resnick:2007,deHaanFerreira:2007}.  One basic and popular
measure is the bivariate (upper) tail-dependence coefficient
\begin{equation}\label{e:lambda-bivariate-classic}
\lambda_X(i,j):= \lim_{ u \uparrow 1} \P [ X_i > F_i^{-1}(u) | X_j > F_j^{-1}(u)],
\end{equation}
where $F_i^{-1}(u):= \inf\{ x\, :\, \P[X_i \le x] \ge u\}$ is the generalized inverse 
of the cumulative distribution function $F_i$ of $X_i$.  Under weak conditions 
the above limit exists and is independent of the choice of the (continuous) marginal 
distributions of $(X_i,X_j)$. The matrix $\Lambda := (\lambda_X(i,j))_{p\times p}$ of 
bivariate tail-dependence coefficients is necessarily positive (semi)definite and in fact, 
since $\lambda_X(i,i) = 1$, it is a correlation matrix of a random vector, see \cite{schlather:tawn:2003}. We call $\Lambda$ as defined in \eqref{e:lambda-bivariate-classic} a \emph{tail-dependence matrix} or \emph{TD matrix} for short.\\

The general theme of our paper is that we review and contribute to the unified treatment of 
tail-dependence using the powerful framework of multivariate regular variation.  This leads 
to deep connections
to existing results in the theory of cut (semi)metrics and $\ell_1$-embeddable metrics \citep{deza:1997}, as
well as to extensions to the {\em Bernoulli compatibility} characterization of tail-dependence matrices
established in \cite{embrechts:hofert:wang:2016} and \cite{krauseetal2018}. What follows is an overview 
of our key ideas and contributions.\\

Since the marginal distributions of $X$ are not important in quantifying tail-dependence, one may transform its 
marginals to be heavy-tailed.  In fact, we make the additional and often very mild assumption that the vector $X$
is regularly varying, i.e., that there exists a Radon measure $\mu$ on $\R^p\setminus\{0\}$ and a suitable positive sequence $a_n\uparrow \infty$ such that
$$
n\P[X \in a_n A] \to \mu(A),\ \ \mbox{as } n\to\infty,
$$
for all Borel sets $A\subset \R^p$ that are bounded away from $0$ and such that 
$\mu(\partial A) =0$ (see Definition \ref{d:RV}, below).  This allows us to conclude that
$n\P[ h(X) > a_n] \to \mu\{ h>1\}$ for a large class of continuous and $1$-homogeneous 
functions $h:\R^p \to [0,\infty)$ (Proposition \ref{p:h(Y)}).  Therefore, if $h$ is a certain {\em risk functional}, we 
readily obtain an asymptotic approximation of the probability of an extreme loss 
$\P[h(X) > a_n] \approx n^{-1} \mu\{h>1\}$.  By varying the risk functional $h$, one obtains 
different measures of tail-dependence, which may be of particular interest to 
practitioners.  For example, if $L = \{i_1,\cdots,i_k\}\subset[p]:=\{1,\cdots,p\}$ and taking $h_L(X) = (\min_{i \in L} X_i)_+:=\max\{0,\min_{i \in L} X_i\}$, the risk functional
quantifies the joint exceedance probability 
$$
\P[h_L(X) > a_n] = \P[\min_{i\in L} X_i>a_n]
$$
that all components of $X$ with index in the set $L$ are simultaneously 
extreme -- an event with potentially devastating consequences.  In practice, due to the limited horizon of
historical data such extreme events especially for large sets $L$ are rarely (if ever) 
observed.  Thus, quantifying their probabilities is very challenging. Yet, as Emil Gumbel 
had eloquently put it  {\it ``It is not possible that the improbable will never occur.''} This underscores the
importance of the theoretical understanding, modeling, and inference of such functionals.  Namely, one naturally arrives at the higher order {\em tail-dependence coefficients}
\begin{equation}\label{e:lambda-intro}
\lambda_X(L):= \lim_{n\to\infty} n \P[ \min_{i\in L} X_i>a_n].    
\end{equation}

It can be seen that if the marginals of the $X_i$'s are identical and $a_n$ is such that
$n^{-1} \sim \P[X_i>a_n]$ (i.e. $\lim_{n \to \infty} n\P[X_i>a_n]=1$), then $\lambda_X(\{i,j\}) = \lim_{n\to\infty} \P[ X_i>a_n\mid X_j>a_n]$ recovers the classic bivariate tail-dependence coefficients $\lambda_X(i,j)$ in \eqref{e:lambda-bivariate-classic}.  Using the functionals $h(X):= \max_{j\in K} X_j$ for 
some $K\subset[p]$, one
arrives at the popular {\em extremal coefficients} 
arising in the study of max-stable processes:
\begin{equation}\label{e:theta-intro}
\theta_X(K):= \lim_{n\to\infty} n\P[\max_{j\in K} X_j > a_n].    
\end{equation}

Starting from the seminal works of \cite{Schlather:2002,schlather:tawn:2003}, 
the structure of the extremal coefficients $\{\theta_X(K),\ K\subset [p]\}$ has been studied
extensively, see \cite{Strokorb:2015-extremal,Strokorb:2015-tcf,molchanov:strokorb:2016,fiebig:2017}, which address fundamental theoretical problems and develop stochastic process extensions. Our goal here is more modest.  We want to study \textit{both} the 
tail-dependence \textit{and} extremal coefficients as risk functionals from the unifying 
perspective of regular variation.  Interestingly, they can be succinctly understood in terms of
{\em exceedance sets.}  Namely, defining the random set
$$
\Theta_n:=\{ i\in [p]\, :\, X_i> a_n\}
$$
we show (Proposition \ref{p:Theta-set} below)
$$
\Theta_n | \{\Theta_n\not=\emptyset\} \stackrel{d}{\longrightarrow} \Theta,\ \ 
\mbox{ as }n\to\infty,
$$
where the limit $\Theta$ is a non-empty random subset of $[p]$ such that
\begin{equation} \label{e:lambda-theta-via-Theta-intro}
\lambda_X(L) = a\cdot \P[ L \subset \Theta]\ \ \mbox{ and }\ \ \theta_X(K) = a\cdot \P[K\cap \Theta \not=\emptyset],
\end{equation}
where $a = \theta_X([p])$.  Thus, $\lambda_X$ and $\theta_X$ (up to rescaling by $a$) are precisely the inclusion and hitting functionals characterizing the distribution of 
$\Theta$ \citep{molchanov:2017}.  Interestingly, the probability mass function of the random set $\Theta$ recovers (up to rescaling) the coefficients in a (generalized) 
Tawn-Molchanov max-stable model associated with $X$ (see \eqref{e:Theta-PMF}).

The above probabilistic representation in \eqref{e:lambda-theta-via-Theta-intro} of the 
tail-dependence functionals leads to transparent proofs of seminal results from
\cite{embrechts:hofert:wang:2016} and \cite{krauseetal2018} on the characterization 
of TD matrices in terms of so-called Bernoulli-compatible matrices. In fact, we readily 
obtain a more general result on the characterization of higher-order tail-dependence
coefficients via Bernoulli-compatible tensors (Proposition \ref{p:Bernoulli-arrays}).\\

Associated to the bivariate tail-dependence coefficients $\lambda_X(\{i,j\})$ we introduce and discuss the so called {\em spectral distance} $d_X$ given by 
$$d_X(i,j):=\lambda_X(\{i\})+\lambda_X(\{j\})-2\lambda_X(\{i,j\}).$$ 
This spectral distance defines a metric on the space of $1$-Fr\'echet random variables (i.e.\ random variables with distribution function $F(x)=\exp\{-c/x\}, x \geq 0,$ for some non-negative scale coefficient $c$, where we speak of a standard $1$-Fr\'echet distribution if $c=1$) living on a joint probability space, which metricizes convergence in probability and was considered in \cite{davis:1993,stoev:taqqu:2005,fiebig:2017}. In Section \ref{sec:l1-emb} we will establish the $L^1$-embeddability of this metric, which allows us to apply the rich theory about metric embeddings in the context of analyzing the tail-dependence coefficients.

In Section \ref{sec:line-tm}, utilizing the exceedence set representation of the bivariate tail-dependence 
coefficients and the $L^1$-embeddability of the spectral distance, we recover the equivalence of the $L^1$ and 
$\ell_1$-embeddability as well as a probabilistic proof of the so-called cut-decomposition of $\ell_1$-embeddable 
finite metric spaces. In this case, this decomposition turns out to be closely related to the Tawn-Molchanov model of an associated max-stable vector $X$ (Proposition \ref{p:d-via-cuts}).
When a given $\ell_1$-embeddable metric has a unique cut-decomposition, it is called {\em rigid} \citep{deza:1997}. Rigidity of the spectral distance basically means that the bivariate tail-dependence coefficients $\Lambda$ determine all higher order tail-dependence coefficients. 
In Theorem \ref{thm:line-tm}, we show that {\em line metrics} are rigid, which to the best of our knowledge
is a new finding. In particular, we obtain that the bivariate tail-dependence coefficient
matrices corresponding to line metrics determine the complete set of tail-dependence or, equivalently,
extremal coefficients of $X$. Interestingly, the random set $\Theta$ corresponding to such
line-metric tail-dependence is (after a suitable reordering of marginals) a random segment, more precisely a random set of the form $\{i, i+1, \ldots, j-1, j\}$ for $1 \leq i \leq j \leq p$ with $i=1$ or $j=p$.  In general, the characterization of rigidity is computationally hard as it is equivalent to the  characterization of the simplex faces of the cone of cut metrics \citep{deza:1997}.

The bivariate TD matrix $\Lambda$ is a correlation matrix of a random vector. It is well-known, however, that not every correlation matrix with non-negative entries is a matrix of tail-dependence coefficients.
The recent works of \cite{fiebig:2017}, \cite{embrechts:hofert:wang:2016}, \cite{krauseetal2018}, and \cite{Shyamalkumar2020} among others have studied extensively various aspects of the class of TD matrices. One surprisingly difficult problem, referred to as the {\em realizability problem}, is checking whether a given matrix $\Lambda$ is a valid TD matrix.  
The extensive study of \cite{Shyamalkumar2020} proposed several practical and efficient algorithms for realizability. Moreover, \cite{Shyamalkumar2020} conjectured that the realizability problem is NP-complete. In Section \ref{sec:NP}, we confirm their conjecture.  We do so by exploiting the established connection to $\ell_1$-embeddability, which allows us to utilize the rich theory on cuts and metrics outlined in the monograph of \cite{deza:1997}. 
It is known that checking whether any given $p$-point metric space is $\ell_1$-embeddable is a computationally hard problem in the NP-complete class. 
\\

{\em The paper is structured as follows:}
In Section~\ref{sec:notation} we give an overview over several ways of modeling and measuring tail-dependence of a random vector, presented in a hierarchic fashion: First of all, multivariate regular variation allows for the most complete asymptotic description of the tail-behavior of (heavy-tailed) random vectors in terms of the tail measure, with a direct correspondence to the class of max-stable models as the natural representatives for each given tail measure. A more condensed description of tail-dependence is given by the values of special extremal dependence functionals like the extremal coefficients and tail-dependence coefficients. Finally, a rather coarse but popular description of the tail-dependence is given in form of those functions evaluated only at bivariate marginals, where the bivariate tail-dependence coefficients form the most prominent example. 

In Section~\ref{sec:TM} we first discuss exceedance sets, as introduced above, and Bernoulli compatibility. Based on this interpretation we give a short introduction into generalized Tawn-Molchanov models. 

In Section~\ref{sec:l1-emb} we explore the relationship between bivariate tail-dependence coefficients and the spectral distance on the space of $1$-Fr\'{e}chet random variables. After a brief introduction into the concepts of metric embeddings of finite metric spaces we will show that the spectral distance is both $L^1$- and $\ell_1$-embeddable, some consequences of which will be explored in Section \ref{sec:line-tm} and Section \ref{sec:NP}. In Section \ref{sec:line-tm} we introduce the concept of rigid metrics and prove that the building blocks of $\ell_1$-embeddability, i.e.\ the line metrics, correspond to Tawn-Molchanov models with a special structure which is completely determine by this line metric. 

Finally, in Section~\ref{sec:NP} we use known results about the computational complexity of embedding problems to show that the realization problem of a tail-dependence matrix is NP-complete.
Some proofs are deferred to the Appendix \ref{sec:proofs}.

\section{Regular variation, max-stability, and extremal dependence}\label{sec:notation}

In this section, we provide a concise overview of fundamental notions on multivariate regular
variation and max-stable distributions, which underpin the study of tail-dependence.  

\subsection{Multivariate regular variation}\label{subsec:MRV} 
The concept of multivariate regular variation is key to the unified treatment of the various tail-dependence notions we will consider.  Much of this material is classic but we provide here a self-contained review tailored to our purposes. Many more details and insights can be found in \cite{resnick:1987,resnick:2007,hult:lindskog:2006,basrak:planinic:2019,kulik:soulier:2020} among 
other sources. 

\medskip We start with a few notations. 
A set $A \subset\R^p$ is said to be \emph{bounded away from $0$} if 
$0\not \in A^{\rm cl}$, i.e., $A\cap B(0,\varepsilon) =\emptyset$, for some $\varepsilon>0$. Here $A^{\rm cl}$ is the closure of $A$ and $B(x,r):=\{ y\in \R^p\, :\, \|x-y\|<r\}$ is the
ball of radius $r$ centered at $x$ in a given fixed norm $\|\cdot\|$. Furthermore, denote the Borel $\sigma$-Algebra on $\mathbb{R}^p$ by ${\cal B}(\R^p)$.

Consider the class $M_0(\R^p)$ of all Borel measures $\mu$ on ${\cal B}(\R^p)$ that
are finite on sets bounded away from $0$, i.e., such that $\mu(B(0,\varepsilon)^c)<\infty$, for all $\varepsilon>0$. Such measures will be referred to as \emph{boundedly finite}. 
For $\mu_n,\mu\in M_0(\R^p),$ we write 
$$
\mu_n \ToM0 \mu,\ \ \mbox{ as }n\to\infty,
$$
if
$
\int_{\R^p} f(x)\mu_n(dx) \to \int_{\R^p} f(x) \mu(dx), \mbox{ as }n\to\infty,
$
for all bounded and continuous $f$ vanishing in a neighborhood of $0$. The latter
is equivalent to having
\begin{equation}\label{e:d:RV}
\mu_n(A) \to \mu(A),\ \ \mbox{ as }n\to\infty,
\end{equation}
for all $\mu$-continuity Borel sets $A$ that are bounded away from $0$ \citep[Theorems 2.1 and 2.4]{hult:lindskog:2006}.

\begin{definition}\label{d:RV} A random vector $X$ in $\R^p$ is said to be regularly varying
if there is a positive sequence $a_n\uparrow \infty$ and a non-zero $\mu\in M_0(\R^p)$ such that
$$
n \P[ X \in a_n \cdot ] \ToM0 \mu(\cdot),\ \ \mbox{ as }n\to\infty. 
$$ 
In this case, we write $X\in {\rm RV}(\{a_n\},\mu)$ and call $\mu$ the \emph{tail measure} of $X$.
\end{definition}

If $X\in {\rm RV}(\{a_n\},\mu)$, then it necessarily follows that there
is a {\em positive index} $\alpha>0$ such that
\begin{equation}\label{e:mu-scaling}
\mu( c A) = c^{-\alpha} \mu(A),\ \ \mbox{ for all $c>0$ and $A\in {\cal B}(\R^p)$,}
\end{equation}
and, moreover, $a_n \sim n^{1/\alpha} \ell(n)$, for some slowly varying 
function $\ell$, see, e.g., \cite{kulik:soulier:2020}, Section 2.1. We shall denote by
${\rm index}(X)$ the index of regular variation $\alpha$ and sometimes write $X\in {\rm RV}_\alpha(\{a_n\},\mu)$ to specify that ${\rm index}(X) = \alpha$.

The measure $\mu$ is unique up to a multiplicative constant and the scaling property \eqref{e:mu-scaling} implies that $\mu$ factors into a radial and an angular component.  Namely, fix any norm $\|\cdot\|$ in $\R^p\setminus\{0\}$
and define  the polar coordinates $r:=\|x\|$ and $u:= x/\|x\|,\ x\not = 0$. Then,
\begin{equation}\label{e:mu-spectral}
\mu(A) = \int_{S} \int_0^\infty  1_{A}(r u) \alpha r^{-\alpha-1}dr \sigma(du),
\end{equation}
where $S:=\{x\, :\, \|x\|=1\}$ is the unit sphere and $\sigma$ is a finite Borel
measure on $S$ referred to as the angular or spectral measure
associated with $\mu$, see, e.g., \cite{kulik:soulier:2020}, Section 2.2. Given the norm $\|\cdot\|$, the measure $\sigma$ is uniquely 
determined as
\begin{equation}\label{e:sigma-spectral}
    \sigma(B) = \mu(\{x\, :\, \|x\|>1,\ x/\|x\| \in B\}), \;\;\; B \in {\cal{B}}(S), 
\end{equation}
where $ {\cal B}(A)$ for $A \subset \mathbb{R}^d$ denotes the $d$-dimensional Borel sets which are also subsets of $A$.
The following is a useful characterization of regular variation sometimes taken
as an equivalent definition, see again, e.g., \cite{kulik:soulier:2020}, Section 2.2.
\begin{prop}\label{p:RV:via:exceed} We have $X\in {\rm RV}_\alpha(\{a_n\},\mu)$ if and only if for all $x>0$
$$
n \P[ \|X\|>a_n x]\to x^{-\alpha},\ \mbox{ as $n\to\infty$, and }\
\P\Big[ \frac{X}{\|X\|} \in \cdot \  |\  \|X\| >r \Big]\Longrightarrow \sigma(\cdot),\ \mbox{ as $r\to\infty$},
$$
where $\Rightarrow$ denotes the weak convergence of probability distributions.
\end{prop}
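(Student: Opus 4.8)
The plan is to prove both implications by leaning on the polar decomposition \eqref{e:mu-spectral} together with the scaling property \eqref{e:mu-scaling}, working throughout with the normalization $\mu\{x:\|x\|>1\}=1$ (equivalently $\sigma(S)=1$) consistent with \eqref{e:sigma-spectral}; this normalization is precisely what renders the limits in the statement constant-free, so that $\sigma$ appears as a genuine probability measure on the compact sphere $S$.

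For the forward direction, suppose $X\in {\rm RV}_\alpha(\{a_n\},\mu)$. First I would apply the $M_0$-convergence to the set $A_x:=\{y:\|y\|>x\}$, which is bounded away from $0$ for every $x>0$ and whose boundary $\{y:\|y\|=x\}$ is a $\mu$-null set because the radial part of $\mu$ in \eqref{e:mu-spectral} is absolutely continuous. This gives $n\P[\|X\|>a_n x]\to\mu(A_x)=x^{-\alpha}\mu(A_1)=x^{-\alpha}$ via \eqref{e:mu-scaling} and the normalization, which is the first claim. For the angular statement I would first handle the discrete scaling: for a $\sigma$-continuity set $B\subset S$ (that is, $\sigma(\partial B)=0$), applying $M_0$-convergence to the polar cone $C_B:=\{x:\|x\|>1,\ x/\|x\|\in B\}$ and dividing by $n\P[\|X\|>a_n]$ yields $\P[X/\|X\|\in B\mid \|X\|>a_n]\to\sigma(B)$ by \eqref{e:sigma-spectral}. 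The remaining work is to pass from $a_n$ to a continuous radius $r\to\infty$. Here I would use monotonicity: both $r\mapsto\P[\|X\|>r,\,X/\|X\|\in B]$ and $r\mapsto\P[\|X\|>r]$ are nonincreasing, so for $a_n\le r<a_{n+1}$ the conditional probability is sandwiched between expressions evaluated at $a_n$ and $a_{n+1}$; since $a_n\sim n^{1/\alpha}\ell(n)$ forces $a_{n+1}/a_n\to1$ and hence $n\P[\|X\|>a_{n+1}]\to1$, both bounds tend to $\sigma(B)$ and the sandwich closes.

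For the reverse direction, I would define $\mu$ directly through the spectral formula \eqref{e:mu-spectral} and aim at $n\P[X\in a_n\,\cdot\,]\ToM0\mu$. The decisive computation is on polar cones $A=\{x:\|x\|>t,\ x/\|x\|\in B\}$ with $t>0$ and $B$ a $\sigma$-continuity set: writing $n\P[X\in a_n A]=n\P[\|X\|>a_n t]\cdot\P[X/\|X\|\in B\mid\|X\|>a_n t]$ and applying hypothesis (1) to the first factor and hypothesis (2), with $r=a_n t\to\infty$, to the second gives $n\P[X\in a_n A]\to t^{-\alpha}\sigma(B)=\mu(A)$. To upgrade this to full $M_0$-convergence I would avoid checking all continuity sets directly; instead, hypothesis (1) supplies the uniform bound $\sup_n n\P[\|X\|>a_n\varepsilon]<\infty$ for every $\varepsilon>0$, which yields relative compactness of $\{n\P[X\in a_n\,\cdot\,]\}$ in $M_0(\R^p)$. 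Thus every subsequence admits a further subsequence converging to some boundedly finite $\nu$; since the polar cones are closed under intersection and $\nu$ agrees with $\mu$ on this $\pi$-system, a uniqueness argument forces $\nu=\mu$, and the whole sequence converges.

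I expect the main obstacle to lie in the two upgrading steps. In the forward direction it is the interpolation from the discrete scaling $a_n$ to the continuous radius $r$, where the monotonicity sandwich must be combined carefully with $a_{n+1}/a_n\to1$. In the reverse direction it is justifying that the polar cones form a convergence-determining $\pi$-system for $M_0$-convergence and that the continuity hypotheses $\sigma(\partial B)=0$ can be arranged on a family rich enough to identify the limit, which I would settle with the portmanteau theorem on $S$ and the relative-compactness argument above; both points are standard in spirit but require the measure-theoretic bookkeeping of \citep{hult:lindskog:2006}.
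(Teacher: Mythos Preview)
The paper does not actually prove Proposition~\ref{p:RV:via:exceed}; it simply cites it as a standard equivalent definition and refers the reader to \cite{kulik:soulier:2020}, Section~2.2. Your proposal therefore supplies what the paper omits, and the route you outline is precisely the standard one found in that reference and in \cite{hult:lindskog:2006}: use $\mu$-continuity of polar cones for the forward direction, and relative compactness plus identification on a $\pi$-system of cones for the converse.

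Your argument is essentially correct. Two minor points worth tightening. First, in the forward direction you should verify explicitly that the polar cone $C_B=\{x:\|x\|>1,\ x/\|x\|\in B\}$ is a $\mu$-continuity set: its boundary is contained in $\{\|x\|=1\}\cup\{x:\|x\|\ge1,\ x/\|x\|\in\partial B\}$, and both pieces have $\mu$-measure zero by the polar decomposition \eqref{e:mu-spectral} together with $\sigma(\partial B)=0$. Second, in the reverse direction, relative compactness in $M_0(\R^p)$ via \citep[Theorem~2.7]{hult:lindskog:2006} requires not only the uniform bound $\sup_n n\P[\|X\|>a_n\varepsilon]<\infty$ that you state, but also the tightness condition $\sup_n n\P[\|X\|>a_n R]\to0$ as $R\to\infty$; this follows immediately from the first hypothesis since the limit is $R^{-\alpha}$, but should be mentioned. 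Your acknowledgment that the $\pi$-system identification needs the bookkeeping of \cite{hult:lindskog:2006} is appropriate; the key observation is that $\sigma$-continuity sets generate the Borel $\sigma$-algebra on the compact sphere $S$, so the cones $\{(t,\infty)\times B\}$ determine any limit measure after passing to polar coordinates.
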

Proposition~\ref{p:RV:via:exceed} characterizes regularly varying random vectors in terms of exceedances over a threshold. An equivalent charaterization is also possible in terms of maxima, see, e.g., \cite{kulik:soulier:2020}, Section 2.1.
\begin{prop}\label{p:RV:via:maxima} For a random vector $Y \in [0,\infty)^d$ we have $ Y \in {\rm RV}_\alpha(\{a_n\},\mu)$ if and only if there exists a non-degenerate random vector $X$ such that for all $x \in [0,\infty)^d$
\begin{equation}\label{e:max-stable-limit}
\P\Big[ a_n^{-1} \bigvee_{t=1}^{n}Y^{(t)} \leq x \Big] \to \P[X \leq x]=\exp\{-\mu[0,x]^c\},\ \mbox{ as } n\to\infty,
\end{equation}
where $[0,x]^c:= \R_+^p \setminus [0,x]=\R_+^p \setminus ([0,x_1] \times \ldots \times [0,x_p])$ and $Y^{(t)},\ t=1,\dots,n$ are independent copies of $Y$ and the operation $\vee$ denotes taking the component-wise maximum.
\end{prop}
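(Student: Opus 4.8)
The plan is to collapse everything onto the product form of the distribution function of the normalized maximum and then to translate between ``no exceedance of a rectangle'' probabilities and the tail mass of the complement of that rectangle. Writing $M_n := \bigvee_{t=1}^n Y^{(t)}$ and using that the copies $Y^{(t)}$ are independent together with $\{M_n \le a_n x\} = \bigcap_{t=1}^n \{Y^{(t)} \le a_n x\}$, we obtain, for every $x \in (0,\infty)^p$,
\[
\P[a_n^{-1} M_n \le x] = \big(\P[Y \le a_n x]\big)^n = \big(1 - \P[Y \in a_n [0,x]^c]\big)^n .
\]
The geometric observation that makes regular variation applicable is that for $x$ in the open orthant $(0,\infty)^p$ the set $[0,x]^c = \R_+^p \setminus [0,x]$ is bounded away from the origin: any point with some coordinate exceeding the corresponding $x_i$ has norm bounded below by a positive constant depending on $\min_i x_i$ (up to equivalence of norms). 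Hence $[0,x]^c$ is precisely the type of set on which Definition~\ref{d:RV} controls $n\P[Y \in a_n\cdot]$.

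For the implication $Y \in {\rm RV}_\alpha(\{a_n\},\mu) \Rightarrow$ max-convergence, I would fix $x$ such that $[0,x]^c$ is a $\mu$-continuity set and invoke \eqref{e:d:RV} to get $n\P[Y \in a_n[0,x]^c] \to \mu([0,x]^c)$. Substituting into the display above and using the elementary limit $(1 - c_n)^n \to e^{-c}$ whenever $n c_n \to c$ yields $\P[a_n^{-1} M_n \le x] \to \exp\{-\mu([0,x]^c)\}$. It then remains to check that $G(x) := \exp\{-\mu([0,x]^c)\}$ is a bona fide non-degenerate distribution function and that the relevant $\mu$-continuity points form a convergence-determining set. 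Since the atoms of $\mu$ can lie on at most countably many axis-orthogonal hyperplanes per coordinate, $G$ is continuous off a negligible set, and the pointwise limit at continuity points identifies the limit law of $a_n^{-1}M_n$ as some $X$ with $\P[X\le x] = G(x)$; non-degeneracy and the Fr\'echet-type margins follow from $\mu\neq 0$ together with the scaling \eqref{e:mu-scaling}.

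For the converse, I would start from $\big(\P[Y \le a_n x]\big)^n \to G(x) = \P[X \le x]$ with $X$ non-degenerate. On the region where $G(x) > 0$ this forces $\P[Y \le a_n x] \to 1$, so $u_n := \P[Y \in a_n[0,x]^c] \to 0$, and $n\log(1-u_n)\to \log G(x)$ with $\log(1-u_n)\sim -u_n$ gives $n\,u_n \to -\log G(x)$. This recovers the limits $n\P[Y \in a_n[0,x]^c] \to -\log G(x) =: \nu([0,x]^c)$ for all such $x$. The main obstacle is the final upgrade: promoting convergence on the special class of rectangle-complements $\{[0,x]^c\}$ to full $M_0$-convergence $n\P[Y \in a_n\cdot]\ToM0\mu$ toward a single boundedly finite measure $\mu$ with $\mu([0,x]^c)=-\log G(x)$.

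Two things must be verified for this upgrade. First, that inclusion--exclusion applied to $-\log G$ consistently defines a boundedly finite measure $\mu$ on $\R_+^p\setminus\{0\}$ (bounded finiteness coming from the fact that each $[0,x]^c$, $x\in(0,\infty)^p$, has finite $\nu$-mass). Second, that this family of sets is convergence-determining for $\ToM0$; this holds because the rectangle-complements form a $\pi$-system generating the Borel $\sigma$-algebra and are bounded away from $0$, so one may appeal to the portmanteau-type equivalence \eqref{e:d:RV} and the uniqueness theory for $M_0$-convergence. Finally, the scaling \eqref{e:mu-scaling} and the index $\alpha$ come essentially for free: a non-degenerate max-limit under pure scaling normalization is max-stable with Fr\'echet-type margins, and its stability relation $G^t(x) = G(b_t^{-1}x)$ translates, via $\mu([0,x]^c) = -\log G(x)$ and the identity $[0,b_t^{-1}x]^c = b_t^{-1}[0,x]^c$, directly into the homogeneity $\mu(sA) = s^{-\alpha}\mu(A)$ with a suitable $\alpha>0$. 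I expect the measure-theoretic upgrade in the converse to be the genuinely delicate step, whereas the forward direction is a routine application of \eqref{e:d:RV} and the $(1-c_n)^n$ limit.
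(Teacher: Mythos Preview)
The paper does not actually supply its own proof of this proposition; it simply states the result and cites \cite{kulik:soulier:2020}, Section~2.1, as a reference. So there is nothing to compare against at the level of argument structure.

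Your outline is the standard route and is essentially what one finds in the cited reference. The forward direction is clean and correct: the product form, the observation that $[0,x]^c$ is bounded away from the origin for $x\in(0,\infty)^p$, the $\mu$-continuity argument via countably many bad hyperplanes, and the $(1-c_n/n)^n\to e^{-c}$ limit are exactly the ingredients needed. For the converse, you correctly isolate the genuine difficulty, namely upgrading pointwise convergence on rectangle-complements to full $M_0$-convergence. Your sketch is accurate in spirit, but two points deserve care if you flesh it out: (i) the $\pi$-system argument alone does not give $M_0$-convergence; one usually first establishes relative compactness of $n\P[Y\in a_n\cdot]$ in $M_0$ (tightness via $\sup_n n\P[\|Y\|>a_n\epsilon]<\infty$, which follows from the marginal Fr\'echet limits) and then identifies any subsequential limit via the rectangle-complements; and (ii) the boundary case $x\in\partial\R_+^p$ (some $x_i=0$) should be handled separately, since then $[0,x]^c$ is not bounded away from $0$ and both sides of the claimed identity vanish. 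These are routine once named, and you already flagged the upgrade as the delicate step.
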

Multivariate regular variation provides an asymptotic framework and for given $\alpha, \{a_n\}$ and $\mu$ there exist several distributions of random vectors $Y$ such that $Y \in {\rm RV}_\alpha(\{a_n\},\mu)$, but according to Proposition~\ref{p:RV:via:maxima} their maxima are all attracted to the same random vector $X$ whose distribution depends only on $\mu$. The class of limiting random variables in Proposition~\ref{p:RV:via:maxima} will be inspected more closely in the next section.

\subsection{Max-stable vectors} 
The homogeneity property \eqref{e:mu-scaling} of $\mu$ implies that the limiting random vector in Proposition~\ref{p:RV:via:maxima} has a certain stability property, namely that
\begin{equation}\label{e:general-max-stability-def}
\bigvee_{t=1}^{n}X^{(t)}  \stackrel{d}{=} n^{1/\alpha} X \;\;\; \mbox{ for all } n \in \mathbb{N},
\end{equation}
with the same notation as in Proposition~\ref{p:RV:via:maxima} and where $\stackrel{d}{=}$ stands for equality in distribution, see \cite{kulik:soulier:2020}, Section 2.1. We call such a random vector $X$ \emph{max-stable} and we call $X$ \emph{non-degenerate max-stable} if in addition $\P[X=(0, \ldots, 0)]<1$. For $\alpha=1$ this simplifies to 
\begin{equation}\label{e:max-stability-def}
\bigvee_{t=1}^{n}X^{(t)}  \stackrel{d}{=} n X \;\;\; \mbox{ for all } n \in \mathbb{N},
\end{equation}
and we speak of a \emph{simple max-stable random vector} $X$, which we will further analyze in the following. 

The marginal distributions of simple max-stable distributions are necessarily $1$-Fr\'echet, that is,
$$
\P[ X_i \le x ] = e^{-\sigma_i/x},\ x>0,
$$
for some non-negative scale coefficient $\sigma_i$.  We shall write $\|X_i\|_1:=\sigma_i$ for the scale coefficient of the 
$1$-Fr\'echet variable $X_i$. The next result characterizes all multivariate simple max-stable distributions. Here, we recall the so-called de Haan construction of a simple max-stable vector. 

\begin{prop}\label{p:de-Haan} Let $(E,{\cal E},\nu)$ be a measure space and let $L_+^1(E,\nu)$ denote the set of all non-negative 
$\nu$-integrable functions on $E$. For every collection $f_i\in L_+^1 (E,\nu),\ 1\leq i\leq p$, there is a 
random vector $X = (X_i)_{1\leq i\leq p}$, such that for all $x_i>0, 1 \leq i \leq p,$
\begin{equation}\label{e:de-Haan-fdd}
\P[ X_i\le x_i,\ 1\leq i\leq p] = \exp\Big\{-\int_{E} \max_{1\leq i\leq p} \frac{f_i(u)}{x_i} \nu(du) \Big\}.
\end{equation}
The random vector $X$ is simple max-stable.  Conversely, for every simple max-stable vector $X$, Equation \eqref{e:de-Haan-fdd} holds and $(E,{\cal E},\nu)$ can be chosen as $([0,1],{\cal B}[0,1],{\rm Leb})$.  In fact, we have the stochastic representation
\begin{equation}\label{e:de-Haan}
(X_i)_{1\leq i\leq p} \stackrel{d}{=} (I(f_i))_{1\leq i\leq p},\ \ \mbox{ with }\ I(f):= \bigvee_{j=1}^\infty \frac{f(U_j)}{\Gamma_j},\ f\in L_+^1([0,1],\nu),
\end{equation}
where $\{(\Gamma_j,U_j)\}$ is a Poisson point process on $(0,\infty)\times [0,1]$ with mean measure $dx\times \nu(du)$.
\end{prop}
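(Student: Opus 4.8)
The plan is to prove the forward construction together with the explicit Poisson representation \eqref{e:de-Haan}, and then to obtain the converse from the spectral decomposition \eqref{e:mu-spectral}.

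For the forward direction I would build $I(f) := \bigvee_j f(U_j)/\Gamma_j$ from a Poisson point process $\{(\Gamma_j,U_j)\}$ on $(0,\infty)\times E$ with mean measure $\mathrm{Leb}\times\nu$ (working over the general space $E$) and read off the joint law of $(I(f_i))_{1\le i\le p}$ via void probabilities. The event $\{I(f_i)\le x_i,\ 1\le i\le p\}$ is precisely the event that the process places no point in $A:=\{(r,u):f_i(u)/r>x_i \text{ for some } i\}=\{(r,u):r<\max_i f_i(u)/x_i\}$, so that
\[
\P[I(f_i)\le x_i,\ 1\le i\le p]=\exp\{-(\mathrm{Leb}\times\nu)(A)\}=\exp\Big\{-\int_E\max_i\frac{f_i(u)}{x_i}\,\nu(du)\Big\},
\]
which is \eqref{e:de-Haan-fdd}. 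The assumption $f_i\in L^1_+$ makes this integral finite for every $x_i>0$, so the right-hand side is a proper distribution function, $I(f_i)<\infty$ almost surely, and the supremum is in fact attained. Simple max-stability is then immediate: writing $V(x)=\int_E\max_i f_i(u)/x_i\,\nu(du)$, one has $V(x/n)=nV(x)$, hence $\P[\bigvee_{t=1}^n X^{(t)}\le x]=\P[X\le x]^n=\exp\{-nV(x)\}=\P[X\le x/n]=\P[nX\le x]$, i.e.\ \eqref{e:max-stability-def}; setting $x_j=\infty$ for $j\ne i$ recovers the $1$-Fr\'echet margins with scale $\|f_i\|_{L^1}$.

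For the converse I would first note that a simple max-stable $X$ has $\P[X\le x]=\exp\{-\mu([0,x]^c)\}$ for a tail measure $\mu$ homogeneous of order $-1$: indeed \eqref{e:max-stability-def} says that $n^{-1}\bigvee_{t=1}^n X^{(t)}$ has the law of $X$ for every $n$, so Proposition~\ref{p:RV:via:maxima} (applied with $Y=X$, $a_n=n$) yields $X\in\mathrm{RV}_1(\{n\},\mu)$ and the stated exponent. Substituting the spectral form \eqref{e:mu-spectral} with $\alpha=1$ into $\mu([0,x]^c)$ and integrating out the radius gives $\mu([0,x]^c)=\int_{S_+}\max_i (u_i/x_i)\,\sigma(du)$ over $S_+:=S\cap[0,\infty)^p$, which already is the de Haan form with $f_i(u)=u_i$.

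The main obstacle is transferring this representation to $([0,1],\mathrm{Leb})$. Here I would invoke the isomorphism theorem for measure spaces: since $S_+$ is a Borel subset of $\R^p$ and $\sigma$ is finite with total mass $c:=\sigma(S_+)$, there is a measurable map $\psi:[0,1]\to S_+$ pushing $c\cdot\mathrm{Leb}$ forward to $\sigma$. Setting $f_i(t):=c\,\psi_i(t)$, which lies in $L^1_+([0,1])$ since $\psi(t)\in S_+$ is bounded, gives $\int_{S_+}\max_i(u_i/x_i)\,\sigma(du)=\int_0^1\max_i(f_i(t)/x_i)\,dt$ and hence \eqref{e:de-Haan-fdd} on $[0,1]$. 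Applying the forward construction and Poisson representation of the first part to these $f_i$ then produces \eqref{e:de-Haan}, since the resulting joint distribution function coincides with that of $X$. The delicate points to verify carefully are the measurability in this pushforward step and the homogeneity/regularity argument guaranteeing that the exponent function genuinely arises from the measure $\mu$ so that \eqref{e:mu-spectral} is applicable.
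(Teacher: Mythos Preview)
The paper does not give its own proof of this proposition; it merely points to \cite{dehaan:1984,stoev:taqqu:2005}. Your argument is the standard one found in those references and is correct in substance.

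A few small remarks. In the forward direction your void-probability computation and the homogeneity argument for simple max-stability are exactly right. In the converse you invoke Proposition~\ref{p:RV:via:maxima} rather than Proposition~\ref{p:X-max-stable-is-RV}; this is a good choice, since the latter is proved in the appendix \emph{using} Proposition~\ref{p:de-Haan} and would be circular. Note, however, that Proposition~\ref{p:RV:via:maxima} assumes the limit $X$ is non-degenerate, so you should treat the trivial case $X\equiv 0$ (take $f_i\equiv 0$) separately. You also silently use that $\mu$, hence $\sigma$, is concentrated on $[0,\infty)^p$ because $X$ takes values there; this is worth stating before restricting to $S_+$. Finally, the pushforward step is correct: any Borel probability measure on a Polish space is the law of some Borel function of a uniform variable on $[0,1]$, and this is all that is needed to produce $\psi$.
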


For a proof and more details, see e.g. \cite{dehaan:1984,stoev:taqqu:2005}.  The functions $f_i$ in \eqref{e:de-Haan-fdd} and \eqref{e:de-Haan} are referred to as {\em spectral functions} associated with the vector $X$.  From \eqref{e:de-Haan-fdd} and \eqref{e:de-Haan}, one can readily see that for all $f\in L_+^1(E,\nu)$,
the so-called {\em extremal integral} $I(f)$ in \eqref{e:de-Haan} is a well-defined $1$-Fr\'echet random variable.  More precisely, its cumulative distribution function is:
$$
\P[ I(f)\le x] = \exp\{ - \|I(f)\|_1/x\},\; x>0, \ \ \mbox{ where }\|I(f)\|_1 = \| f \|_{L^1}= \int_{E} f(u) \nu(du).
$$
Moreover, the extremal integral functional $I(\cdot)$ is {\em max-linear} in the sense that for all $a_i\ge 0$ and 
$f_i\in L_+^1(E,\nu),\ 1\leq i\leq n$, we have
$$
I\Big(\bigvee_{t=1}^{n} a_t f_t\Big) = \bigvee_{t=1}^{n} a_t I(f_t).
$$
Thus, every max-linear combination $\vee_{i=1}^n a_i X_i$ of $X$ as above with coefficients $a_i\ge 0$ 
is a $1$-Fr\'echet random variable with scale coefficient:
$$
\Big\| \bigvee_{i=1}^n a_i X_i \Big\|_1 = \int_{E} \Big( \bigvee_{i=1}^n a_i f_i(u) \Big) \nu(du) = \Big\|\bigvee_{i=1}^n a_i f_i\Big\|_{L^1}.
$$
We will further explore the asymptotic properties of simple max-stable random vectors and how they fit into the framework of multivariate regular variation in the following section. 
\subsection{Extremal dependence functionals and tail-dependence coefficients} 
The tail measure $\mu$ and the normalizing sequence $\{a_n\}$ from Section \ref{subsec:MRV} provide a comprehensive description of the asymptotic behavior of a random vector $X$ and allow to approximate probabilities of the form $\P[X \in a_n A]$ for all sets $A$ bounded away from 0. Sometimes, however, one may be interested in those probabilities for certain simple sets $A$ only and describe the asymptotic behavior of $X$ by certain extremal dependence functions instead. In this section, we first derive a general result for such extremal dependence functions and then introduce two particularly popular families of them.

\begin{prop}\label{p:h(Y)} Let $X\in RV(\{a_n\},\mu)$ in $\R^p$ with index $\alpha>0$.  
Let also $h:\R^p\to [0,\infty)$ be a non-negative, continuous and $1$-homogeneous function, i.e., 
$h(c x) = c h(x),\ c>0,\ x\in \R^p$.  Then,
\begin{equation}\label{e:p:h(Y)}
\lim_{n\to\infty} n\P[ h(X) > a_n] = \int_S h(u)^\alpha \sigma(du) = \E[h(Y)^\alpha]\sigma(S),
\end{equation}
where $Y$ has probability distribution $\sigma(\cdot)/\sigma(S)$ and $\sigma$ is as in \eqref{e:sigma-spectral}.
\end{prop}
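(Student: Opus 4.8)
The plan is to express the event $\{h(X) > a_n\}$ as an exceedance of the scaled vector $X/a_n$ over a fixed set, and then invoke the definition of regular variation directly. Since $h$ is non-negative and $1$-homogeneous, we have $h(X) > a_n$ if and only if $h(X/a_n) > 1$, i.e.\ if and only if $X \in a_n A$, where $A := \{x \in \R^p : h(x) > 1\}$. Hence $n\P[h(X) > a_n] = n\P[X \in a_n A]$, and by Definition~\ref{d:RV} together with the equivalent formulation \eqref{e:d:RV} it suffices to show that $A$ is a $\mu$-continuity set bounded away from $0$, in which case $n\P[X \in a_n A] \to \mu(A)$, and then to evaluate $\mu(A)$.

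First I would check that $A$ is bounded away from $0$: homogeneity forces $h(0) = 0$, and continuity of $h$ then yields a neighbourhood of the origin on which $h < 1$, so $0 \notin A^{\rm cl}$. Since $h$ is continuous, $A$ is open, whence $A^{\rm cl} \subseteq \{h \ge 1\}$ and therefore $\partial A \subseteq \{x : h(x) = 1\}$. The main obstacle is then verifying $\mu(\partial A) = 0$, for which I would use the polar decomposition \eqref{e:mu-spectral}: writing $\mu(\{h = 1\})$ as an integral over $S$ of $\int_0^\infty 1_{\{h(ru)=1\}} \alpha r^{-\alpha - 1}\,dr$, I observe that for each fixed $u \in S$ the set $\{r > 0 : h(ru) = 1\} = \{r > 0 : r\,h(u) = 1\}$ is either empty (if $h(u) = 0$) or the single point $r = 1/h(u)$, hence Lebesgue-null in the radial variable. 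The inner integral therefore vanishes for every $u$, and $\mu(\partial A) \le \mu(\{h=1\}) = 0$.

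Having established $n\P[X \in a_n A] \to \mu(A)$, it remains to compute $\mu(A)$. Using \eqref{e:mu-spectral} again and the identity $1_A(ru) = 1_{\{r > 1/h(u)\}}$, valid whenever $h(u) > 0$ (and $1_A(ru) \equiv 0$ when $h(u) = 0$, consistent with $h(u)^\alpha = 0$), the radial integral evaluates as $\int_{1/h(u)}^\infty \alpha r^{-\alpha - 1}\,dr = h(u)^\alpha$. Substituting gives $\mu(A) = \int_S h(u)^\alpha\,\sigma(du)$, which is finite because $S$ is compact, $h$ is continuous, and $\sigma$ is a finite measure. The final equality $\int_S h(u)^\alpha\,\sigma(du) = \E[h(Y)^\alpha]\,\sigma(S)$ is then immediate from the definition of $Y$ as having law $\sigma(\cdot)/\sigma(S)$. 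Beyond the continuity-set verification I do not anticipate further difficulty; the remaining steps are a direct unwinding of the homogeneity of $h$ and the spectral form of $\mu$.
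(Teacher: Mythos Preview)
Your proof is correct and follows essentially the same route as the paper: identify $\{h(X)>a_n\}$ with $\{X\in a_n A\}$ for $A=\{h>1\}$, verify that $A$ is bounded away from $0$ and is a $\mu$-continuity set, apply the definition of regular variation, and evaluate $\mu(A)$ via the polar decomposition \eqref{e:mu-spectral}. The only notable difference is in how $\mu(\{h=1\})=0$ is established: the paper argues via scaling that $\mu(\{h=t\})=t^{-\alpha}\mu(\{h=1\})$ for all $t>0$, and then observes that the finite-measure set $\{h\ge a\}$ is an uncountable disjoint union $\bigcup_{t\ge a}\{h=t\}$, forcing each slice to be null; you instead use the polar decomposition directly and note that for fixed $u\in S$ the radial section $\{r>0:r\,h(u)=1\}$ is at most a singleton. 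Both arguments are standard and equally valid; yours is slightly more direct since it reuses the same tool (the polar form of $\mu$) needed for the final computation.
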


Though this result is similar to \cite{yuen:2020}, Lemma A.7, and also a special case to
\cite{Dyszewski2020}, Theorem 2.1, its proof is given Section \ref{sec:proofs}.

We will apply the formula in \eqref{e:p:h(Y)} for homogeneous functionals 
of the form $h(x) = (\min_{i\in K} x_i)_+$ and $h(x) = (\max_{i\in K} x_i)_+$ 
for some subset $K\subset [p]=\{1,\ldots,p\}$.

The next result shows that simple max-stable vectors are regularly varying 
and provides means to express their extremal dependence functionals \textit{both} in terms of spectral functions \textit{and} tail measures.

\begin{prop}\label{p:X-max-stable-is-RV}  Let $X=(X_i)_{1\leq i\leq p}$ be a non-degenerate simple max-stable vector as in \eqref{e:de-Haan-fdd}.  Then, $X\in {\rm RV}_1(\{n\},\mu)$, where
$\mu$ is supported on $[0,\infty)^p$ and for all $x=(x_i)_{1\leq i\leq p}\in \R_+^p\setminus\{0\}$
$$
\P[ X\le x] = \exp\{-\mu([0,x]^c) \}, \;\; \mbox{ with }  \mu([0,x]^c)=\int_{E} \max_{1\leq i\leq p} \frac{f_i(u)}{x_i} \nu(du).
$$ 
Moreover, for every non-negative, continuous $1$-homogeneous function $h:\R^p\to[0,\infty)$, we have
\begin{equation}\label{e:p:max-stable-RV}
 \lim_{n\to\infty} n \P[h(X)>n] = \mu(\{h>1\})  = \int_S h(u)\sigma(du)=
 \int_{E} h(\vec{f}(z))\nu(dz),
\end{equation}
where $\vec{f}(z) = (f_1(z),\cdots,f_p(z))$. In particular, the spectral 
measure $\sigma$ has the representation
\begin{equation}\label{e:p:max-stable-RV-sigma}
\sigma(B)= \int_{E} 1_B\Big(\frac{\vec{f}(z)}{\|\vec{f}(z)\|}\Big) \|\vec{f}(z)\| \nu(dz),\ \ 
B\in {\cal B}(S).
\end{equation}
\end{prop}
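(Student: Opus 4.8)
The plan is to establish the three assertions in order: first that $X$ is regularly varying with index $1$ and the stated tail measure, second the chain of identities in \eqref{e:p:max-stable-RV}, and third the spectral representation \eqref{e:p:max-stable-RV-sigma}, which will in fact drop out as a special case of the second. I would begin from the de Haan representation \eqref{e:de-Haan-fdd}, which already gives the cumulative distribution function $\P[X\le x]=\exp\{-\int_E \max_i f_i(u)/x_i\,\nu(du)\}$. Since $X$ takes values in $[0,\infty)^p$, to verify regular variation it suffices by Proposition \ref{p:RV:via:maxima} to check \eqref{e:max-stable-limit} with $a_n=n$ and $\alpha=1$; but max-stability \eqref{e:max-stability-def} makes this immediate, because $\P[n^{-1}\bigvee_{t=1}^n X^{(t)}\le x]=\P[X\le x]^{\,?}$ is handled directly by the max-stability identity applied to the i.i.d.\ copies. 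Concretely, $\P[\bigvee_{t=1}^n X^{(t)}\le nx]=\big(\P[X\le nx]\big)^n=\exp\{-n\,\mu([0,nx]^c)\}$, and the homogeneity $\mu([0,nx]^c)=n^{-1}\mu([0,x]^c)$ coming from the $1$-homogeneity of $x\mapsto\int_E\max_i f_i(u)/x_i\,\nu(du)$ collapses the exponent back to $\mu([0,x]^c)$. This identifies $\mu([0,x]^c)=\int_E\max_i f_i(u)/x_i\,\nu(du)$, shows $\mu$ is supported on $[0,\infty)^p$, and gives $X\in{\rm RV}_1(\{n\},\mu)$.

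Next I would prove \eqref{e:p:max-stable-RV}. The leftmost equality $\lim_n n\P[h(X)>n]=\mu(\{h>1\})$ is an instance of regular variation: with index $\alpha=1$ and $a_n=n$, the scaling \eqref{e:mu-scaling} gives $\mu(\{h>x\})=x^{-1}\mu(\{h>1\})$, so I would invoke Proposition \ref{p:h(Y)} (or directly the definition of ${\rm ToM0}$-convergence applied to the set $\{h>1\}$, checking it is bounded away from $0$ by $1$-homogeneity and that $\mu$ charges its boundary with mass zero). The middle equality $\mu(\{h>1\})=\int_S h(u)\,\sigma(du)$ is exactly the content of Proposition \ref{p:h(Y)} with $\alpha=1$. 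For the rightmost equality $\int_S h(u)\,\sigma(du)=\int_E h(\vec f(z))\,\nu(dz)$, the key computation is to push the measure $\nu$ forward through the map $z\mapsto\vec f(z)$ and pass to polar coordinates. Writing $r(z)=\|\vec f(z)\|$ and $u(z)=\vec f(z)/\|\vec f(z)\|$ on the set where $\vec f(z)\ne 0$, the $1$-homogeneity of $h$ gives $h(\vec f(z))=r(z)\,h(u(z))$, so $\int_E h(\vec f(z))\,\nu(dz)=\int_E \|\vec f(z)\|\,h(u(z))\,\nu(dz)$, and recognizing $\int_E \|\vec f(z)\|\,(\,\cdot\,)(u(z))\,\nu(dz)$ as the measure $\sigma$ on $S$ yields the claim.

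The third assertion \eqref{e:p:max-stable-RV-sigma} is then the specialization of this last identity to indicator functions: taking $h=1_B$ is not quite legitimate since indicators are not continuous and $1$-homogeneous, so instead I would read \eqref{e:p:max-stable-RV-sigma} directly off the change-of-variables computation just performed, namely $\sigma(B)=\int_E 1_B\big(\vec f(z)/\|\vec f(z)\|\big)\,\|\vec f(z)\|\,\nu(dz)$, and independently confirm it against the defining formula \eqref{e:sigma-spectral}, $\sigma(B)=\mu(\{x:\|x\|>1,\ x/\|x\|\in B\})$, by applying the already-established identity $\mu(\{h>1\})=\int_E h(\vec f(z))\,\nu(dz)$ to functions $h$ approximating the indicator of the relevant cone and taking limits.

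The main obstacle I anticipate is the rigorous justification of the change of variables and the handling of the set $\{z:\vec f(z)=0\}$ in the polar decomposition: on that set $u(z)$ is undefined, but it contributes zero to both sides because $\|\vec f(z)\|=0$ there, so one must phrase the pushforward carefully on $E\setminus\{\vec f=0\}$. A secondary technical point is the measurability and integrability needed to apply Proposition \ref{p:h(Y)} and to interchange the limit with the integral; these follow from $f_i\in L^1_+(E,\nu)$ and the continuity of $h$, but should be stated. I expect the regular-variation step and the middle equality to be essentially immediate given the earlier propositions, with the genuine work concentrated in the final polar change of variables linking the spectral measure $\sigma$ to the spectral functions $\vec f$.
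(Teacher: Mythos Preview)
Your approach to the first assertion via Proposition~\ref{p:RV:via:maxima} is correct and in fact shorter than the paper's: the paper establishes $n\P[X\in n\cdot]\ToM0\mu$ from scratch by a relative-compactness argument (invoking Theorem~2.7 of \cite{hult:lindskog:2006}), whereas you simply observe that max-stability makes the convergence in \eqref{e:max-stable-limit} an \emph{equality} for every $n$, so Proposition~\ref{p:RV:via:maxima} applies immediately. This is a genuine simplification.

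However, your treatment of the last equality in \eqref{e:p:max-stable-RV} and of \eqref{e:p:max-stable-RV-sigma} is circular. To prove $\int_S h(u)\,\sigma(du)=\int_E h(\vec f(z))\,\nu(dz)$ you write $\int_E h(\vec f(z))\,\nu(dz)=\int_E\|\vec f(z)\|\,h(u(z))\,\nu(dz)$ and then ``recognize'' the weighted pushforward $B\mapsto\int_E 1_B(u(z))\,\|\vec f(z)\|\,\nu(dz)$ as the spectral measure $\sigma$. But that recognition \emph{is} \eqref{e:p:max-stable-RV-sigma}, which you have not yet established. When you then turn to \eqref{e:p:max-stable-RV-sigma} and propose either to ``read it off'' the computation just performed or to verify it using ``the already-established identity $\mu(\{h>1\})=\int_E h(\vec f(z))\,\nu(dz)$,'' you are invoking the very identity whose proof relied on \eqref{e:p:max-stable-RV-sigma}.

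The paper breaks the circle by a different route: from the de~Haan formula it first records the integral representation
\[
\mu[0,x]^c=\int_E\int_0^\infty 1_{[0,x]^c}\bigl(r\,\vec f(u)\bigr)\,r^{-2}\,dr\,\nu(du),
\]
and since the complements of rectangles determine $\mu$, this extends to $\mu(A)=\int_E\int_0^\infty 1_A(r\vec f(u))\,r^{-2}\,dr\,\nu(du)$ for all relevant $A$. One then computes $\mu(\{h>1\})=\int_E h(\vec f(z))\,\nu(dz)$ \emph{directly}, with no reference to $\sigma$ at all. Combining this with Proposition~\ref{p:h(Y)} gives $\int_S h\,d\sigma=\int_E h(\vec f)\,d\nu$ for every admissible $h$, and only \emph{then} does one specialize to $h(x)=g(x/\|x\|)\,\|x\|$ for arbitrary continuous $g:S\to\R_+$ to deduce \eqref{e:p:max-stable-RV-sigma}. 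The missing ingredient in your argument is precisely this direct link from $\mu$ to $\int_E h(\vec f)\,d\nu$ that bypasses $\sigma$.
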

Again, this result is standard but we sketch its proof for the sake of completeness 
in Appendix \ref{sec:proofs}.  The classic representation of the simple max-stable cumulative distribution functions is a 
simple corollary from  Proposition \ref{p:X-max-stable-is-RV}.

\begin{cor} In the situation of Proposition \ref{p:X-max-stable-is-RV}, by taking $h(u):= h_x(u) := (\max_{i\in [p]} u_i/x_i)_+$ for $x_i\in (0,\infty)^p$ in \eqref{e:p:max-stable-RV}, we obtain 
$\mu(\{h>1\}) = \mu([0,x]^c)$ and 
\begin{align}\label{e:max-stable-CDF-via-sigma}
 \P[ X\le x] = \exp\{-\mu([0,x]^c)\} = \exp\Big\{ -\int_S \Big( \max_{i\in [p]} \frac{u_i}{x_i}\Big) \sigma(du)\Big\}.
\end{align}
\end{cor}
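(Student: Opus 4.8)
The plan is to specialize the general formula \eqref{e:p:max-stable-RV} of Proposition \ref{p:X-max-stable-is-RV} to the one-parameter family of functionals $h_x$ and to recognize the resulting super-level set as $[0,x]^c$. First I would check that the hypotheses of that proposition are met by $h_x$: for fixed $x \in (0,\infty)^p$ the map $h_x(u) = (\max_{i \in [p]} u_i/x_i)_+$ is non-negative, continuous (a positive part of a finite maximum of continuous linear functionals), and $1$-homogeneous, since for $c > 0$ the scalar $c$ commutes with both the maximum and the positive part. Hence \eqref{e:p:max-stable-RV} applies verbatim with $h = h_x$, furnishing the three equal quantities $\mu(\{h_x > 1\})$, $\int_S h_x(u)\,\sigma(du)$, and $\int_E h_x(\vec{f}(z))\,\nu(dz)$.

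The substantive step is the set identification $\{h_x > 1\} = [0,x]^c$ up to a $\mu$-null set. Indeed, $h_x(u) > 1$ holds if and only if $\max_i u_i/x_i > 1$ (the positive part being irrelevant once the maximum exceeds $1 > 0$), i.e.\ if and only if $u_i > x_i$ for at least one index $i$. Because every $x_i > 0$ and $\mu$ is supported on $\R_+^p = [0,\infty)^p$ by Proposition \ref{p:X-max-stable-is-RV}, intersecting with the support yields exactly $\{u \in \R_+^p : u \notin [0,x]\} = [0,x]^c$ in the notation of Proposition \ref{p:RV:via:maxima}. Consequently $\mu(\{h_x > 1\}) = \mu([0,x]^c)$, which is the first asserted equality.

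It then remains to read off the spectral representation from the middle term of \eqref{e:p:max-stable-RV}, namely $\mu(\{h_x > 1\}) = \int_S h_x(u)\,\sigma(du)$. Since $\mu$ charges only the positive orthant, the spectral measure $\sigma$ from \eqref{e:sigma-spectral} is concentrated on $S \cap [0,\infty)^p$, where each coordinate $u_i \geq 0$; there the positive part is superfluous and $h_x(u) = \max_i u_i/x_i$ for $\sigma$-almost every $u$. Combining $\mu([0,x]^c) = \int_S \max_i (u_i/x_i)\,\sigma(du)$ with the cumulative distribution function $\P[X \le x] = \exp\{-\mu([0,x]^c)\}$, again supplied by Proposition \ref{p:X-max-stable-is-RV}, gives \eqref{e:max-stable-CDF-via-sigma}.

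I expect no genuine obstacle here: the corollary is essentially a dictionary translation between the three expressions already produced by Proposition \ref{p:X-max-stable-is-RV}. The only point warranting a moment's care is the bookkeeping around the two notions of complement---the global super-level set $\{h_x > 1\} \subset \R^p$ versus the orthant-relative complement $[0,x]^c = \R_+^p \setminus [0,x]$---which is reconciled entirely by invoking that $\mu$, and hence $\sigma$, places no mass outside the positive orthant. The same support property is what legitimizes dropping the positive part inside the integral.
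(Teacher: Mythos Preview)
Your proposal is correct and follows exactly the approach the paper intends: the corollary is stated without a separate proof, the statement itself indicating that one merely plugs $h=h_x$ into \eqref{e:p:max-stable-RV}. Your write-up supplies precisely the details implicit in that sentence---checking continuity and $1$-homogeneity of $h_x$, identifying $\{h_x>1\}\cap\R_+^p=[0,x]^c$, and dropping the positive part on the support of $\sigma$---so there is nothing to add or correct.
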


For more details on the characterization of the max-domain 
of attraction of multivariate max-stable laws in terms of  multivariate regular variation, 
see e.g., Proposition 5.17 in \cite{resnick:1987}.

We are now ready to recall the general definitions of the extremal and tail-dependence coefficients 
of a regularly varying random vector, which have briefly been introduced in Section \ref{sec:Introduction}, now with additional notation for the normalizing sequence $\{a_n\}$. 

\begin{definition}\label{def:tail-dep-coeff} Let $X=(X_i)_{1\leq i\leq p} \in {\rm RV}(\{a_n\},\mu)$.
Then, for non-empty sets $K, L\subset[p]$, we let
$$
\theta_X(K;\{a_n\}):= \lim_{n\to\infty} n\P\Big[\max_{i\in K} X_i>a_n\Big]\ \ \mbox{ and }\ \ 
 \lambda_X(L;\{a_n\}) := \lim_{n\to\infty} n\P\Big[\min_{i\in L} X_i>a_n\Big].
$$
The $\theta_X(K;\{a_n\})$'s and $\lambda_X(L;\{a_n\})$'s are referred to as the 
\textit{extremal and tail-dependence coefficients relative to $\{a_n\}$} of the vector $X$, respectively.  
\end{definition}

If it is clear to which random vector we refer to or it does not matter for the argument, we may drop the index $X$ and just write $\theta(K;\{a_n\})$ and $\lambda(K;\{a_n\})$. Sometimes we will view $\theta$ and $\lambda$ as functions of $k$-tuples and write for example
$$ \lambda_X(i_1,\cdots,i_k;\{a_n\}) ,\ 1\le i_1,\ldots,i_k\le p,
$$
(where some of the arguments $i_1,\ldots,i_k$ may repeat) which corresponds to $\lambda_X(L,\{a_n\})$ where $L$ 
is the set of all distinct values in $\{i_1,\ldots,i_k\}$.

\begin{remark}
Note that the definitions of $\theta_X(K,\{a_n\})$ and $\lambda_X(L,\{a_n\})$ depend on the choice of the 
sequence $\{a_n\}$. They are unique, however, up to a multiplicative constant.  More precisely,
if ${\rm index}(X) = \alpha$ and $a_n\sim a_n', c>0$, then
$$
\theta_X(K;\{c a_n\}) = c^{-\alpha} \theta_X(K;\{a_n'\})\ \ \mbox{ as well as } \ \ 
\lambda_X(L;\{c a_n\}) = c^{-\alpha} \lambda_X(L;\{a_n'\}).
$$
\end{remark}

\begin{remark}
In the following we will focus on extremal and tail-dependence coefficients of max-stable random vectors, which exist by Definition \ref{def:tail-dep-coeff} in combination with Proposition \ref{p:X-max-stable-is-RV} as long as $X$ is non-degenerate. Observe that if $X$ is non-degenerate simple max-stable, then
$$
 \lambda(i;\{n\}) = \theta (i;\{n\}) = \lim_{n \to \infty} n \P[X_i>n]=\lim_{n \to \infty} n(1-e^{-\sigma_i /n})=\sigma_i=\|X_i\|_1,\ 1\leq i\leq p.
$$
Thus, if all marginals of $X$ are standard $1-$Fr\'echet, i.e., $\|X_i\|_1=1$, then setting $a_n=n$ ensures that $\lim_{n \to \infty} n \P[X_i>a_n]=1$ and one recovers the upper tail-dependence coefficient $\lambda_X(i,j)$ from \eqref{e:lambda-bivariate-classic}, $i,j \in [p]$. 
More generally, if $X$ is non-degenerate simple max-stable, then we can choose $a_n=n$ as a normalizing sequence and in this case (or if the sequence $\{a_n\}$ does not matter for the argument), we will also write
   $$ \theta(K)=\theta_X(K)=\theta_X(K;\{a_n\}),\, \;\;\; \lambda(L)=\lambda_X(L)=\lambda_X(L;\{a_n\}), \;\;\; K, L \subset [p]. $$
In the case that $\P[X=(0,\ldots, 0)]=1$, we set $\theta_X(K)=\lambda_X(L)=0$ for all $K, L \subset [p]$. 
\end{remark}

The following result expresses these functionals in terms of both the tail measure
$\mu$ and the spectral functions of the vector $X$. Again, the proof is given in Appendix \ref{sec:proofs}.

\begin{cor}\label{c:theta-lambda-mu} Let $X=(X_i)_{1\leq i\leq p}$ be a simple max-stable vector as in \eqref{e:de-Haan-fdd}. Then,
\begin{equation}\label{e:theta-lambda-mu}
\theta_X(K) = \mu \Big( \bigcup_{i\in K} A_i \Big)\ \mbox{ and }\ \lambda_X(L)
= \mu \Big( \bigcap_{i\in L} A_i \Big),
\end{equation}
where $A_i := \{ x\in \R^p\, :\, x_i>1\}$ and
\begin{equation}\label{e:theta-lambda-f}
\theta_X(K) =\int_E \max_{i\in K} f_i(x) \nu(dx)  \ \mbox{ and }\ \lambda_X(L) = \int_E \min_{i\in L} f_i(x) \nu(dx).
\end{equation}
\end{cor}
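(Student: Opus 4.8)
The plan is to obtain both pairs of identities as immediate specializations of Proposition~\ref{p:X-max-stable-is-RV}, applied to the two homogeneous risk functionals
$$
h_K(x) := \Big(\max_{i\in K} x_i\Big)_+ \qquad\text{and}\qquad g_L(x) := \Big(\min_{i\in L} x_i\Big)_+ .
$$
First I would verify that $h_K$ and $g_L$ satisfy the hypotheses of that proposition: both are non-negative by construction; both are continuous, being compositions of the continuous maps $x\mapsto \max_{i\in K} x_i$ and $x\mapsto \min_{i\in L} x_i$ with the continuous positive-part map; and both are $1$-homogeneous, since $\max_{i\in K}$ and $\min_{i\in L}$ are $1$-homogeneous and $(\cdot)_+$ commutes with multiplication by any $c>0$. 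Hence \eqref{e:p:max-stable-RV} applies to each of them.

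Next I would identify the three quantities appearing in \eqref{e:p:max-stable-RV} in the present setting. Because $X$ is non-degenerate simple max-stable, it is supported on $[0,\infty)^p$, so $\max_{i\in K} X_i\ge 0$ and $\min_{i\in L} X_i\ge 0$ almost surely; the positive parts are therefore inert on the support of $X$, giving $h_K(X)=\max_{i\in K} X_i$ and $g_L(X)=\min_{i\in L} X_i$. Since Proposition~\ref{p:X-max-stable-is-RV} gives $X\in {\rm RV}_1(\{n\},\mu)$, the choice $a_n=n$ yields $n\P[h_K(X)>n]\to \theta_X(K)$ and $n\P[g_L(X)>n]\to\lambda_X(L)$ by Definition~\ref{def:tail-dep-coeff}. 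On the measure side, the super-level sets are
$$
\{h_K>1\}=\Big\{x:\max_{i\in K}x_i>1\Big\}=\bigcup_{i\in K}A_i
\qquad\text{and}\qquad
\{g_L>1\}=\Big\{x:\min_{i\in L}x_i>1\Big\}=\bigcap_{i\in L}A_i ,
$$
with $A_i=\{x:x_i>1\}$. Finally, since each $f_i\ge 0$, the positive parts are again harmless inside the spectral-function integral, so $\int_E h_K(\vec f(z))\,\nu(dz)=\int_E \max_{i\in K} f_i\,d\nu$ and $\int_E g_L(\vec f(z))\,\nu(dz)=\int_E \min_{i\in L} f_i\,d\nu$.

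Chaining these identifications through \eqref{e:p:max-stable-RV} then yields
$$
\theta_X(K)=\mu\Big(\bigcup_{i\in K}A_i\Big)=\int_E \max_{i\in K} f_i(x)\,\nu(dx),\qquad
\lambda_X(L)=\mu\Big(\bigcap_{i\in L}A_i\Big)=\int_E \min_{i\in L} f_i(x)\,\nu(dx),
$$
which are exactly \eqref{e:theta-lambda-mu} and \eqref{e:theta-lambda-f}. The argument is essentially bookkeeping, and the only point requiring genuine care is the role of the positive part: $(\cdot)_+$ is needed so that $h_K$ and $g_L$ map into $[0,\infty)$ and thus meet the hypotheses of Proposition~\ref{p:X-max-stable-is-RV}, yet it must be checked to be inert both on the support of $X$ (so that the probabilistic limits agree with Definition~\ref{def:tail-dep-coeff}) and inside the $\nu$-integral (so that the integrands collapse to the stated max and min of the $f_i$'s). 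Since $X$ and all $f_i$ are non-negative, this verification is immediate, so I anticipate no substantial obstacle.
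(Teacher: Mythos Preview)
Your proof is correct and follows essentially the same approach as the paper: apply Proposition~\ref{p:X-max-stable-is-RV} (specifically \eqref{e:p:max-stable-RV}) to the $1$-homogeneous functionals $h_K(x)=(\max_{i\in K}x_i)_+$ and $g_L(x)=(\min_{i\in L}x_i)_+$, then identify the super-level sets and the spectral-function integrals. You are in fact slightly more explicit than the paper in justifying why the positive part is inert on the support of $X$ and on $\vec{f}$, which the paper leaves implicit.
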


\subsection{Bivariate tail-dependence measures and spectral distance}\label{sec:bivariate-intro}
In Definition \ref{def:tail-dep-coeff} we introduced general extremal and tail-dependence coefficients for arbitrary non-empty subsets $K,L\subset [p]$, i.e.\ for $2^p-1$ different sets. Often these are too many coefficients for a handy description of the dependence structure. Therefore, one may consider only the pairwise dependence in a simple max-stable vector $X$ which corresponds to the consideration of sets $K$ and $L$ with at most two entries. The set of tail-dependence coefficients with sets containing at most two elements can be written in the so called \emph{matrix of bivariate tail-dependence coefficients}, which we denote by $$\Lambda_X = \Lambda = (\lambda_X(i,j))_{1\leq i,j\leq p} = (\lambda_X(i,j;\{n\}))_{1\leq i,j\leq p}.$$  
For the bivariate tail-dependence we have the alternative representation
\begin{align}
	\lambda_X(i,j)&=\lim_{n\to\infty}n\P[ X_i >n, X_j>n ]
	 =\lim_{n\to\infty}n(\P[ X_i >n]+\P[ X_j>n ]-\P[ X_i \vee  X_j>n ]) \\
	&=\|X_i\|_1+\|X_j\|_1-\|X_i\vee X_j\|_1.\label{eq:rep_lampda}
\end{align} 
For standardized marginals $\|X_i\|=1$ this implies $\lambda_X(i,j)=2-\|X_i\vee X_j\|_1$. 
The $1$-Fr\'echet marginals of $X$ imply 
$$\P[X_i>n]\sim \frac{\|X_i\|_1}{n}\,\,\,\,\text{ and } \,\, \P[X_i\vee X_j>n]\sim \frac{\|X_i\vee X_j\|_1}{n} $$ 
as $n\to\infty$, where $\|X_i\vee X_j\|_1$ denotes the scale coefficient of the $1$-Fr\'echet distribution of $X_i\vee X_j$. 
Thus, for standardized marginals $\|X_i\|_1=1$, $1\leq i\leq p$, the bivariate tail-dependence coefficients also have the following  representation for all $1\leq i,j\leq p$:
\begin{align}\label{eq:lambda-conditional}
	\lambda_X({i,j})=\lim_{u\to\infty} \P[ X_i >u, X_j>u ]/\P[X_i>u]
	=\lim_{n\to\infty}\P[ X_j >n\mid  X_i>n ].\quad
\end{align}

In this form, the bivariate tail-dependence matrix is a popular measure for the extremal dependence in the random vector $X$. First appearing around the 60's (e.g.\ \cite{tiago:1962}), the bivariate tail-dependence coefficients are frequently considered in the literature, see e.g. \cite{Coles:1999,beirlant:2004,frahm:2005,fiebig:2017,Shyamalkumar2020} for different considerations (sometimes other names as \textit{coefficient of (upper) tail-dependence} or \textit{$\chi$-measure} are used). In the context of finance and insurance but also in an environmental context this measure is used to describe the extremal risk in the random vector $X$. Moreover, the characterization of whether $X_i$ and $X_j$ are extremally dependent is usually formulated by these bivariate tail-dependence coefficents: If $\lambda_X(i,j)=0$, then $X_i$ and $X_j$ are extremally independent, otherwise the two random variables are extremally dependent.

Note that for standardized marginals the relation $\theta_X(i,j)=2-\lambda_X(i,j)$ holds. The extremal dependence coefficient in this form has often been used in the literature as a measure for extremal dependence, see e.g. \cite{smith:1990,schlather:tawn:2003,Strokorb:2015-extremal}.

In all these references, the tail-dependence coefficient was defined as in \eqref{eq:lambda-conditional} and standardized (or at least identically distributed) marginal distributions were assumed, as it is common for the analysis of dependence. However, we allow for unequal scales and therefore use the more general form \eqref{eq:rep_lampda}.

\begin{remark} The matrix of bivariate tail-dependence coefficients $\Lambda$ of a simple max-stable
vector is necessarily positive semi-definite.  Indeed, this follows from the observation that by Corollary~\ref{c:theta-lambda-mu}
$$
\lambda(i,j) = \int f_i(x) \wedge f_j(x) \nu(dx) = \int {\rm Cov}(B(f_i(x)), B(f_j(x))) \nu(dx),
$$
where $B=\{B(t),\ t\ge 0\}$ is a standard Brownian motion and since non-negative 
mixtures of covariance matrices are again covariance matrices. 
Another way to see this is from the observation that for each $n$, we have
$n\P[X_i>n, X_j>n] =n\E[ I(X_i>n) I(X_j>n)]$ is a positive semi-definite function of $i,j\in [p]$,
which is related to the fact that $(i,j)\mapsto \lambda(i,j)$ is, up to a multiplicative
constant, the covariance function of a certain random exceedance set (see Remark \ref{e:lambda-ij-is-set-covariance}, below).
\end{remark}

The matrix $\Lambda$ is thus positive semi-definite, has non-negative entries and for standardized marginals of $X$ it holds $\lambda(\{i\})=1$, i.e. $\Lambda$ is a correlation matrix. However, not every correlation matrix with non-negative entries is necessarily a matrix of bivariate tail-dependence coefficients. The realization problem (i.e. the question whether a given matrix is the tail-dependence matrix of some random vector) is a recent topic in the literature \citep{fiebig:2017,krauseetal2018,Shyamalkumar2020}. We will further discuss this problem in Section \ref{sec:NP}.

Related to the bivariate dependence coefficients we define an associated function, which will turn out to be a semi-metric on $[p]$. 
\begin{definition}\label{def:specdist}
Let $X=(X_i)_{1\leq i\leq p}$ be a simple max-stable vector. Then, for $i,j\in [p]$, the \emph{spectral distance} $d_X$ is defined by 
\begin{equation}
	\label{eq:dist} d_X(i,j):=d(X_i,X_j):=2\|X_i\vee X_j\|_1-\|X_i\|_1-\|X_j\|_1.
\end{equation} 
\end{definition}

By \eqref{eq:rep_lampda} 
\begin{align}
	\label{eq:d-lambda}	d_X(i,j)&=\|X_i\|_1+\|X_j\|_1-2\lambda_X(i,j)
	=\lambda_X(i)+\lambda_X(j)-2\lambda_X(i,j).
\end{align}
If the scales of the marginals of the simple max-stable vector $(X_i)_{1\leq i\leq p}$ are the same, i.e. $\|X_i\|=c>0$ for some $c>0$ and all $1\leq i\leq p$, then \eqref{eq:d-lambda} simplifies to 
\begin{equation}\label{eq:d-lambda-simple}
    d(i,j)=2(c-\lambda_X(i,j)).
\end{equation}
For standard $1$-Fr\'echet marginals this further reduces to $d(i,j)=2(1-\lambda_X(i,j))$.

The spectral distance for max-stable vectors was already considered in \cite{stoev:taqqu:2005}, equation (2.11). There it was shown that this distance is indeed a semi-metric on $[p]$ \citep[Proposition 2.6]{stoev:taqqu:2005} and that it metricizes convergence in probability in $1$-Fr\'echet spaces \citep[Proposition 2.4]{stoev:taqqu:2005}. 
In the form of \eqref{eq:d-lambda}, the spectral distance also appears in \cite{fiebig:2017}, where it was defined in two steps in \citep[Proposition 34 and 37]{fiebig:2017}. There, the use of the spectral distance is based on the fundamental work of \citep[Section 5.2]{deza:1997}, where it is used in a different context. 

In Section \ref{sec:l1-emb} we will prove that the spectral distance of a simple max-linear vector $X$ is $L^1$-embeddable, with representation $d_X(i,j)=\|f_i-f_j\|_{L^1}$, where $f_i,f_j$ are the spectral functions of $X$. In this form, the spectral distance was already used in \cite{davis:1989, davis:1993}, where it was mainly applied for a projection method for prediction of max-stable processes. \cite{davis:1993} also gave a connection to the bivariate tail-dependence coefficients $\lambda(i,j)$ as considered in \cite{tiago:1962}, but only in the case of equally scaled marginals.

\section{Tail-dependence via exceedance sets}\label{sec:TM}

In this section we develop a unified approach to representing tail-dependence via random exceedence sets, which explains and extends the notion of Bernoulli compatibility discovered in \cite{embrechts:hofert:wang:2016} to higher order tail-dependence. Moreover, we introduce a slight extension of the so-called Tawn-Molchanov models and explore their connections to extremal and 
tail-dependence coefficients. 

\subsection{Bernoulli compatibility}  \label{sec:exceedance-set}

We will first demonstrate that tail-dependence can be succinctly
characterized via a random set obtained as the limit of exceedance sets.  
Let $X \in {\rm RV}_\alpha(\{a_n\},\mu)$ and consider the {\em exceedance set}:
$$
\Theta_n:= \{ i\, :\, X_i>a_n\}.
$$
The asymptotic distribution of this random set, conditioned on it being non-empty can
be directly characterized in terms of the extremal or tail-dependence coefficients of $X$.
Specifically, these dependence coefficients can be seen as the {\em hitting} and {\em inclusion}
functionals of a \emph{limiting random set} $\Theta$, respectively.  For the 
precise definitions and related notions from the theory of random sets, we will always refer to the monograph of \cite{molchanov:2017}.\\

Before proceeding with the analysis of $\Theta$ we will introduce some appropriate coefficients. Let
\begin{equation}\label{e:beta-via-mu}
\beta(J):= \mu(B_J):= \mu\Big( \bigcap_{j\in J} A_j \cap\bigcap_{k\in J^c} A_k^c \Big),\ \ \emptyset\not=J\subset [p],
\end{equation}
where again $A_i := \{ x\in \R^p\, :\, x_i>1\}, i \in [p]$.
Then, in view of \eqref{e:theta-lambda-mu}, since the $B_J$'s are all pairwise disjoint in 
$J$,
\begin{equation}\label{e:theta-lambda-via-beta}
\theta_X(K) = \sum_{J\, :\, J\cap K \not = \emptyset } \beta(J)\ \ \mbox{ and }\ \ \lambda_X(L) 
= \sum_{J\, :\, L\subset J} \beta(J).
\end{equation}
This, in view of the so-called M\"obius inversion formula, see, e.g., \cite{molchanov:2017}, Theorem 1.1.61, 
yields the inversion formulae:
\begin{equation}\label{e:beta-via-theta}
\beta(J) = \sum_{K\, :\, \emptyset \not=K,\ J^c\subset K} (-1)^{|J\cap K|+1} \theta_X(K) ,
\end{equation}
which is Equation (7) in \cite{schlather:tawn:2003}, Theorem 1.
We also have
\begin{equation}\label{e:beta-via-lambda}
\beta(J) = \sum_{L\, :\, J \subset L \subset [p]} (-1)^{|L\setminus J|} \lambda_X(L).
\end{equation}
Finally, the usual {\em inclusion-exclusion} type relationships hold
between $\theta$ and $\lambda$:
\begin{equation}\label{e:theta-and-lambda}
\theta_X(K) = \sum_{L\, :\, \emptyset\not=L\subset K}(-1)^{|L|-1}\lambda_X(L)\ \ \mbox{ and }\ \ 
\lambda_X(L) = \sum_{K\, :\, \emptyset \not=K\subset L} (-1)^{|K|-1} \theta_X(K). 
\end{equation}
Although some of the Relations \eqref{e:beta-via-theta}, \eqref{e:beta-via-lambda}, and
\eqref{e:theta-and-lambda} are available in the literature, we prove them in Appendix \ref{sec:proofs}
independently with elementary arguments in Lemma \ref{l:formulae}. \\

Observe that the event $\{\Theta_n \cap K \not = \emptyset\}$ is $\{\max_{i\in K} X_i >a_n\}$.  This immediately implies that
$$
T_n(K):=\P[ \Theta_n \cap K \not=\emptyset \ |\ \Theta_n\not=\emptyset] 
= \frac{\P[ \max_{i\in K} X_i >a_n]}
{\P[\max_{i\in[p]} X_i>a_n]} \longrightarrow \frac{\theta_X(K)}{\theta_X([p])},\ \mbox{ as $n\to\infty$}.
$$
The functionals $T_n(\cdot)$ are known as the hitting functionals of the conditional distribution of the random set $\Theta_n$.  
They are completely alternating capacities and their limit yields hitting functionals $T(K):= \theta_X(K)/\theta_X([p])$ of a non-empty random set $\Theta \subset [p]$.
This random set $\Theta$ may be viewed as the ``typical'' exceedance set for a regularly varying vector as the  threshold $a_n$ approaches infinity. 
It is immediate from \eqref{e:beta-via-theta} and \cite{molchanov:2017}, Corollary 1.1.31, that
\begin{equation}\label{e:Theta-PMF}
\P[\Theta = J] = \frac{\beta(J)}{\sum_{\emptyset \neq K \subset [p]} \beta(K)},\ \ \emptyset\not=J\subset[p].
\end{equation}
Observing that $\theta_X([p]) = \sum_{\emptyset \neq K \subset [p]} \beta(K),$ we have thus established the following result.
\begin{prop}\label{p:Theta-set} Let $X \in {\rm RV}(\{a_n\},\mu)$ and define the random exceedance set
$\Theta_n:= \{i\, :\, X_i>a_n\}$.  Then, as $n\to\infty$, we have
$$
\P[\Theta_n \in \cdot | \{\Theta_n\not = \emptyset\}] \Rightarrow \P[\Theta \in \cdot],
$$
where the probability mass function of $\Theta$ is as in \eqref{e:Theta-PMF} and
the $\beta(J)$'s are as in \eqref{e:beta-via-mu}.  We have moreover that 
\begin{equation}\label{e:lambda-via-the-set-Theta}
\P[\Theta \cap K \not =\emptyset] = \frac{\theta_X(K)}{\theta_X([p])}\ \ \mbox{ and }\ \ 
\P[L\subset \Theta] = \frac{\lambda_X(L)}{\theta_X([p])}.
\end{equation}
\end{prop}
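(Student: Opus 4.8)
The plan is to exploit that $\Theta_n$ takes values in the finite set $2^{[p]}$, so that weak convergence of its conditional law reduces to pointwise convergence of the point masses $\P[\Theta_n = J \mid \Theta_n \neq \emptyset]$ over nonempty $J \subset [p]$, and then to read each point mass off directly from the definition of regular variation. The first observation I would make is that, with $A_i = \{x : x_i > 1\}$ as in \eqref{e:beta-via-mu}, the event $\{\Theta_n = J\}$ is exactly $\{X_j > a_n \ \forall j \in J,\ X_k \le a_n \ \forall k \in J^c\} = \{X \in a_n B_J\}$, where $B_J = \bigcap_{j \in J} A_j \cap \bigcap_{k \in J^c} A_k^c$ is the very set whose $\mu$-measure defines $\beta(J)$. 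Because $J \neq \emptyset$, every point of $B_J$ has some coordinate exceeding $1$, so $0 \notin B_J^{\rm cl}$ and $B_J$ is bounded away from $0$, putting us in the regime of Definition \ref{d:RV} and the equivalent set-convergence \eqref{e:d:RV}.

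The one genuinely delicate step, which I expect to be the main obstacle, is verifying that $B_J$ is a $\mu$-continuity set, i.e.\ $\mu(\partial B_J) = 0$. Since $\partial B_J$ is contained in the union of the coordinate hyperplanes $H_i := \{x : x_i = 1\}$, it suffices to show $\mu(H_i) = 0$ for each $i$. Here I would invoke the scaling property \eqref{e:mu-scaling}: the dilates $\{x : x_i = c\} = c H_i$ are pairwise disjoint with $\mu(c H_i) = c^{-\alpha}\mu(H_i)$, so were $\mu(H_i)$ positive, the uncountable family $\{c H_i : c \in [1,2]\}$ would consist of disjoint sets each of measure at least $2^{-\alpha}\mu(H_i) > 0$ sitting inside $\{1 \le x_i \le 2\}$, which is bounded away from $0$ and hence of finite $\mu$-measure — a contradiction. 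This forces $\mu(H_i) = 0$, so $B_J$ is a continuity set and $n\P[\Theta_n = J] = n\P[X \in a_n B_J] \to \mu(B_J) = \beta(J)$.

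With these finitely many limits in hand, I would sum over nonempty $J$ to obtain $n\P[\Theta_n \neq \emptyset] = \sum_{\emptyset \neq J \subset [p]} n\P[\Theta_n = J] \to \sum_{\emptyset \neq J \subset [p]} \beta(J) = \theta_X([p])$, the final equality being the identity recorded just before the statement. Dividing then gives
$$\P[\Theta_n = J \mid \Theta_n \neq \emptyset] = \frac{n\P[\Theta_n = J]}{n\P[\Theta_n \neq \emptyset]} \longrightarrow \frac{\beta(J)}{\sum_{\emptyset \neq K \subset [p]} \beta(K)},$$
which is exactly the mass function \eqref{e:Theta-PMF}; since the state space is finite, this pointwise convergence is the claimed weak convergence and defines the a.s.\ nonempty limit set $\Theta$.

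It remains to verify the two functional identities, which I would obtain purely by summing the mass function against the relations \eqref{e:theta-lambda-via-beta} already derived. Namely,
$$\P[\Theta \cap K \neq \emptyset] = \sum_{J : J \cap K \neq \emptyset} \P[\Theta = J] = \frac{\sum_{J : J \cap K \neq \emptyset} \beta(J)}{\theta_X([p])} = \frac{\theta_X(K)}{\theta_X([p])},$$
and in the same way $\P[L \subset \Theta] = \big(\sum_{J : L \subset J} \beta(J)\big)/\theta_X([p]) = \lambda_X(L)/\theta_X([p])$. Everything beyond the continuity-set check is thus bookkeeping with the finite family of regular-variation limits and the linear $\beta$--$\theta$--$\lambda$ relations.
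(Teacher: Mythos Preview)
Your proof is correct, but it proceeds by a genuinely different route from the paper. The paper works with the \emph{hitting functionals}: it observes that $\{\Theta_n\cap K\neq\emptyset\}=\{\max_{i\in K}X_i>a_n\}$, so the conditional hitting probability converges to $\theta_X(K)/\theta_X([p])$ directly from the definition of the extremal coefficients (whose existence, including the underlying continuity-set issue, is already handled in Proposition~\ref{p:h(Y)} for the continuous $1$-homogeneous function $h(x)=(\max_{i\in K}x_i)_+$). It then appeals to the theory of random sets---completely alternating capacities and the M\"obius inversion of \cite{molchanov:2017}, Corollary 1.1.31---to pass from the limiting hitting functional to the probability mass function \eqref{e:Theta-PMF}.

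You instead go straight for the point masses $\P[\Theta_n=J]$, identifying $\{\Theta_n=J\}=\{X\in a_nB_J\}$ and verifying $\mu$-continuity of $B_J$ by hand via the scaling argument on the hyperplanes $\{x_i=1\}$; the hitting and inclusion identities then fall out by summing the mass function against \eqref{e:theta-lambda-via-beta}. Your approach is more elementary---it avoids any appeal to capacity theory or M\"obius inversion for random sets---while the paper's route makes the connection to the hitting/inclusion functional framework of \cite{molchanov:2017} more explicit and reuses the continuity work already packaged in Proposition~\ref{p:h(Y)}.
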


\begin{remark}
\cite{molchanov:strokorb:2016} introduced the important class of
Choquet random sup-measures whose distribution is characterized by the extremal 
coefficient functional $\theta(\cdot)$.  This is closely related 
but not identical to our perspective here, which emphasizes
threshold-exceedance rather than max-stability.
\end{remark}

The above result shows that all tail-dependence coefficients can be succinctly represented (up to a constant) via the random set $\Theta$. This finding allows us to connect the tail-dependence coefficients to so-called {\em Bernoulli-compatible} tensors.  

\begin{definition} A $k$-tensor $T = (T(i_1,\cdots,i_k))_{1\le i_1,\cdots,i_k\le p}$ is said to be Bernoulli-compatible, if 
\begin{equation}\label{e:T-Bernoulli-compatible}
T(i_1,\cdots,i_k) = \E \Big[ \xi(i_1)\cdots \xi(i_k) \Big],
\end{equation}
where $\xi(1),\cdots,\xi(p)$ are (possibly dependent) Bernoulli $0$ or $1$-valued random variables, i.e. $P(\xi(i)=1)=p_i=1-P(\xi(i)=0)$ for some $p_i \in [0,1], i \in [p]$. If not all $\xi(i)$'s are 
identically zero, the tensor $T$ is said to be non-degenerate.
\end{definition}

In the case $k=2$, this definition recovers the notion of Bernoulli compatibility in \cite{embrechts:hofert:wang:2016}. Proposition \ref{p:Theta-set} implies the following result.

\begin{prop}\label{p:Bernoulli-arrays}\phantom{x}
\begin{itemize}
    \item[(i)] For every Bernoulli-compatible $k$-tensor 
    $T = (T(i_1,\cdots,i_k))_{[p]^k}$, there exists a 
    simple max-stable random vector $X$ such that
$$
T(i_1,\cdots,i_k) = \lambda_X(i_1,\cdots,i_k),
$$
for all $i_1,\cdots,i_k\in [p]$.
\item [(ii)] Conversely, for every simple max-stable random vector $X=(X_i)_{1\leq i\leq p}$, and every 
$c\ge \theta_X([p])$
\begin{equation}\label{e:p:Bernoulli-arrays-ii} 
(T(i_1,\cdots,i_k))_{[p]^k} := \frac{1}{c}\cdot \Big( \lambda_X(i_1,\cdots,i_k)\Big)_{[p]^k}
\end{equation}
is a Bernoulli-compatible $k$-tensor.
\end{itemize}
\end{prop}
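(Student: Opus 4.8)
The plan is to prove both directions from the random set $\Theta$ of Proposition \ref{p:Theta-set} together with one elementary observation: a $\{0,1\}$-valued random variable $\xi$ is idempotent, so $\xi^m = \xi$ for every $m\geq 1$ and, for any tuple, $\xi(i_1)\cdots\xi(i_k) = \prod_{i\in L}\xi(i)$, where $L=\{i_1,\ldots,i_k\}$ is the set of distinct indices; equivalently $\min_{i\in L}\xi(i) = \prod_{i\in L}\xi(i)$. This lets me pass freely between products over tuples (with repeats), products over the distinct set $L$, and minima, which is precisely the bookkeeping needed to match $\lambda_X$ (a function of the set $L$) against a $k$-tensor (a function of tuples).

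For part (ii), I would set $\xi(i):=1_{\{i\in\Theta\}}$, which are genuine Bernoulli variables on the probability space carrying $\Theta$. By idempotence, $\E[\xi(i_1)\cdots\xi(i_k)] = \P[L\subset\Theta]$, and by the second identity in \eqref{e:lambda-via-the-set-Theta} this equals $\lambda_X(L)/\theta_X([p]) = \lambda_X(i_1,\ldots,i_k)/\theta_X([p])$, giving the claim for $c=\theta_X([p])$. To reach a general $c\geq\theta_X([p])$ I would thin: introduce a Bernoulli variable $B$ independent of $\Theta$ with $\P[B=1]=\theta_X([p])/c$, a valid probability exactly because $c\geq\theta_X([p])$, and set $\tilde\xi(i):=B\,\xi(i)$. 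These are again $\{0,1\}$-valued, and using $B^k=B$ together with independence, $\E[\tilde\xi(i_1)\cdots\tilde\xi(i_k)] = \E[B]\,\E[\xi(i_1)\cdots\xi(i_k)] = (\theta_X([p])/c)\cdot(\lambda_X(i_1,\ldots,i_k)/\theta_X([p])) = \lambda_X(i_1,\ldots,i_k)/c$. The degenerate case $\theta_X([p])=0$ (i.e.\ $X=0$) is trivial since then $\lambda_X\equiv 0$.

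For part (i), I would realize the given Bernoulli variables $\xi(1),\ldots,\xi(p)$, defined on some probability space $(\Omega,\mathcal F,\P)$, directly as spectral functions. Taking the measure space $(E,\mathcal E,\nu)=(\Omega,\mathcal F,\P)$ and $f_i:=\xi(i)\in L^1_+(\Omega,\P)$ (integrable since bounded), Proposition \ref{p:de-Haan} produces a simple max-stable vector $X$ with these spectral functions. Corollary \ref{c:theta-lambda-mu} then gives $\lambda_X(L) = \int_\Omega \min_{i\in L}\xi(i)\,d\P$, and since $\min_{i\in L}\xi(i)=\prod_{i\in L}\xi(i)$ for $\{0,1\}$-valued functions, this equals $\E[\prod_{i\in L}\xi(i)] = \E[\xi(i_1)\cdots\xi(i_k)] = T(i_1,\ldots,i_k)$, as required.

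Neither direction presents a serious obstacle once Proposition \ref{p:Theta-set} is available; the only points demanding care are the treatment of repeated indices, handled uniformly by idempotence, and the thinning step in part (ii), where the hypothesis $c\geq\theta_X([p])$ is exactly the condition ensuring that the thinning probability $\theta_X([p])/c$ lies in $[0,1]$. I would present the idempotence remark first so that both directions read off it cleanly.
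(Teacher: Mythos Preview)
Your proof is correct. Part (ii) is essentially identical to the paper's argument: both define $\xi(i)=1_{\Theta}(i)$, invoke \eqref{e:lambda-via-the-set-Theta}, and use an independent Bernoulli thinning variable $B$ to reach general $c\ge\theta_X([p])$; the paper also notes the degenerate case $\theta_X([p])=0$ separately, as you do.

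Part (i) takes a genuinely different route. The paper first forms the random set $\Theta=\{i:\xi(i)=1\}$, sets $\beta(J)=\P[\Theta=J]$, and builds the Tawn--Molchanov model $X=\bigvee_{J}\beta(J)1_J Z_J$; it then computes $\lambda_X(L)=\sum_{J\supset L}\beta(J)=\P[L\subset\Theta]$ via Lemma~\ref{lemma:d-linear}. You instead take the probability space $(\Omega,\mathcal F,\P)$ carrying the $\xi(i)$'s as the measure space in Proposition~\ref{p:de-Haan}, use $f_i=\xi(i)$ as spectral functions, and read off $\lambda_X(L)=\int\min_{i\in L}\xi(i)\,d\P=\E\big[\prod_{i\in L}\xi(i)\big]$ from \eqref{e:theta-lambda-f}. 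Your argument is shorter and avoids both the TM construction and the auxiliary linearity lemma. The paper's construction, on the other hand, produces the generalized Tawn--Molchanov model explicitly, which is the object it goes on to study in Section~\ref{sec:TM} and beyond; so in context the paper's detour is not wasted, while your approach is the cleaner self-contained proof of the proposition itself.
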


\begin{proof} $(i):$ Assume \eqref{e:T-Bernoulli-compatible} holds and 
introduce the random set $\Theta :=\{i\, :\, \xi(i)=1\}$. Let $\beta(J) := \P[\Theta =J]$ and define
the simple max-stable vector 
\begin{equation}\label{eq:TM-in-proof}
X:= \bigvee_{J\,:\, \emptyset\not=J\subset [p]} \beta(J) 1_J Z_J,
\end{equation}
where $1_{J}=(1_J(i))_{1\leq i\leq p}$ contains $1$ in the coordinates in $J$ and $0$ otherwise and the $Z_J$'s are iid standard $1$-Fr\'echet. In view of Lemma \ref{lemma:d-linear} and since $\lambda_{1_J Z_J}(L)=1$ for $L\subset J$ and $\lambda_{1_J Z_J}(L)=0$ for $L\not\subset J$, we have 
$$
\lambda_X(L) = \sum_{J\, :\, L\subset J} \beta(J) = \P[ L\subset \Theta].
$$
Since for $L = \{i_1,\cdots,i_k\}$ we have $1_{\{L\subset \Theta\}} = \prod_{j=1}^k \xi(i_j)$, we obtain
$$
T(i_1,\cdots,i_k) = \E[ \xi(i_1)\cdots\xi(i_k)]=\P[L\subset \Theta] = \lambda_X(L).
$$
This completes the proof of (i).\\

$(ii):$ If $\P[X=(0, \ldots, 0)]=1$, then $\theta_X([p])=0$ and the statement follows by setting all $\xi(i_k)$ identically to 0, so assume $\P[X = (0, \ldots, 0)]<1$ in the following, which implies $\theta_X([p])>0$.  Let $\Theta\subset [p]$ be a 
random set such that \eqref{e:lambda-via-the-set-Theta} holds, i.e.,
$$
\lambda_X(L) = \theta_X([p])\cdot \P[L\subset \Theta],\ L\subset [p].
$$ 
Define $\xi(i):= B\cdot 1_{\Theta}(i)$, where $B$ is a Bernoulli random variable, independent of $\Theta$, such that
$\P[B=1] = 1-\P[B=0] = q\in (0,1]$ for all $i \in [p]$.  Then, we have that
$$
\E[ \xi(i_1)\cdots \xi(i_k)] = q \E[ 1_{\{i_1,\cdots,i_k\} \subset \Theta}] = \frac{q}{\theta_X([p])} \cdot
\lambda_X(i_1,\cdots,i_k).
$$
This shows that \eqref{e:p:Bernoulli-arrays-ii} holds with potentially any $c\ge \theta_X([p])$. 
\end{proof}

\begin{remark} As it can be seen from the proof the lower bound on the 
constant $c$ in Proposition \ref{p:Bernoulli-arrays} (ii) cannot be improved. 
Observe that $\theta([p]) \le \sum_{i\in [p]} \lambda_X(i)$,
where the inequality is strict unless all $X_i$'s are independent. 
Thus, the above result even in the case $k=2$ improves upon Theorem 3 
in \cite{krauseetal2018} where the range for the constant $c$ is $c\ge \sum_{i\in [p]} \lambda_X(i)$.
\end{remark}

\begin{remark}\label{e:lambda-ij-is-set-covariance}
In the case of two-point sets, we have that the bivariate tail-dependence coefficient
\begin{equation}\label{e:lambda-via-Theta}
\lambda(i,j) = \theta([p]) \times \P[ i,j\in \Theta] = \theta([p]) \E[1_{\Theta}(i) 1_{\Theta}(j)] ,\ \ i,j\in[p],
\end{equation}
is proportional to the so-called covariance function 
$(i,j)\mapsto \P[i,j\in \Theta] =\E[1_{\Theta}(i) 1_{\Theta}(j)]$
of the random set $\Theta$.  This shows again that the bivariate tail-dependence function 
$(i,j)\mapsto \lambda(i,j)$ is positive semidefinite.
\end{remark}

\begin{remark}\label{rem:Bercomp} Relation \eqref{e:lambda-via-Theta} recovers a
simple proof of the Bernoulli compatibility of TD
matrices established in Theorem 3.3 of \cite{embrechts:hofert:wang:2016}. 
Namely, their result states that $\Lambda = (\lambda_{i,j})_{p\times p}$
is a matrix of bivariate tail-dependence coefficients, if and only if 
$\Lambda = c \E [\xi \xi^\top]$ for some $c>0$ and a random vector
$\xi=(\xi_i)_{1\leq i\leq p}$ with Bernoulli entries taking values in $\{0,1\}$.
Clearly, there is a one-to-one correspondence between a random set $\Theta\subset [p]$ and a Bernoulli random vector: $\Theta:=\{i\, :\, \xi_i=1\}$ and 
$\xi = (1_{\Theta}(i))_{1\leq i\leq p}$.  The characterization 
result then follows from \eqref{e:lambda-via-Theta}.
\end{remark}

\subsection{Generalized Tawn-Molchanov models}

In the previous section we defined in \eqref{e:beta-via-mu} coefficients $\beta(J)$ to characterize the distribution of the limiting exceedance set $\Theta$. These coefficients were then used in \eqref{eq:TM-in-proof} to construct a max-stable random vector in order to prove Proposition~\ref{p:Bernoulli-arrays}. This special  random vector is in fact nothing else than a generalized version of the so-called Tawn-Molchanov model which we will introduce formally in this section. 

The following result is a slight extension and re-formulation of existing results in the literature, which have first appeared in \cite{Schlather:2002,schlather:tawn:2003} (see also
\cite{Strokorb:2015-extremal,molchanov:strokorb:2016} for extensions) in the context of finding necessary and sufficient
conditions for a set of $2^p-1$ numbers $\{\theta(K)\mid \, \emptyset \not=K\subset[p]\}$ to be the extremal coefficients 
of a max-stable vector $X$. The novelty here is that we consider max-stable vectors with possibly non-identical 
marginals and treat simultaneously the cases of extremal as well as tail-dependence coefficients.

\begin{theorem}\label{t:TM-model} The function $\{\theta(K),\ K
\subset[p]\}$ ($\{\lambda(L),\ L\subset[p]\}$, respectively) 
yields the extremal (tail-dependence, respectively) coefficients of a 
simple max-stable vector $X=(X_i)_{1\leq i\leq p}$ if and only if the $\beta(J)$'s in \eqref{e:beta-via-theta} (\eqref{e:beta-via-lambda}, respectively) are non-negative for all 
$\emptyset\not=J\subset [p]$.  In this case, let $Z_J, J \subset [p]$, be iid standard $1$-Fr\'echet random variables and define
\begin{equation}\label{e:TM-model}
X^* = (X_i^*)_{1\leq i\leq p} := \bigvee_{\emptyset \neq J \subset [p]} \beta(J) 1_J Z_J,
\end{equation}
where $1_{J}=(1_J(i))_{1\leq i\leq p}$ contains $1$ in the coordinates in $J$ and $0$ 
otherwise.  Then, $X^*$ is a max-stable random vector whose extremal (tail-dependence) coefficients are precisely the $\theta(K)$'s ($\lambda(L)$'s, respectively).
\end{theorem}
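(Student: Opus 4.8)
The plan is to establish the two implications separately, carrying out the argument explicitly for the extremal coefficients $\{\theta(K)\}$ together with the $\beta(J)$ of \eqref{e:beta-via-theta}; I would then note that the tail-dependence version (with \eqref{e:beta-via-lambda}) follows at once, since $\theta$ and $\lambda$ determine one another through \eqref{e:theta-and-lambda} and, by \eqref{e:theta-lambda-via-beta}, both are recovered from the \emph{same} family $\{\beta(J)\}$.

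For the ``only if'' direction I would start from a simple max-stable $X$ whose extremal coefficients are the given $\{\theta(K)\}$, with tail measure $\mu$. By Corollary~\ref{c:theta-lambda-mu}, $\theta(K) = \mu\big(\bigcup_{i\in K}A_i\big)$ for $A_i = \{x : x_i>1\}$. Writing $\beta'(J) := \mu(B_J)$ as in \eqref{e:beta-via-mu}, the $B_J$ are pairwise disjoint, so $\beta'(J)\ge 0$ and $\theta(K) = \sum_{J : J\cap K\ne\emptyset}\beta'(J)$, which is exactly \eqref{e:theta-lambda-via-beta}. Because Möbius inversion is a bijection (Lemma~\ref{l:formulae}), the coefficients produced by \eqref{e:beta-via-theta} must equal $\beta'(J) = \mu(B_J)\ge 0$, giving the asserted non-negativity.

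For the ``if'' direction I would assume $\beta(J)\ge 0$ for all $\emptyset\ne J\subset[p]$ and form $X^*$ via \eqref{e:TM-model}. Reading $X^*$ as a de~Haan model \eqref{e:de-Haan-fdd} over the index space $E=\{J : \emptyset\ne J\subset[p]\}$ with counting measure $\nu$ and spectral functions $f_i(J):=\beta(J)1_J(i)$, Proposition~\ref{p:de-Haan} makes $X^*$ simple max-stable. Its coefficients then drop out of \eqref{e:theta-lambda-f}: using $\max_{i\in K}1_J(i)=1_{\{J\cap K\ne\emptyset\}}$ and $\min_{i\in L}1_J(i)=1_{\{L\subset J\}}$,
\begin{equation}
\theta_{X^*}(K)=\sum_{J:J\cap K\ne\emptyset}\beta(J),\qquad \lambda_{X^*}(L)=\sum_{J:L\subset J}\beta(J),
\end{equation}
and comparison with \eqref{e:theta-lambda-via-beta} gives $\theta_{X^*}(K)=\theta(K)$ and $\lambda_{X^*}(L)=\lambda(L)$, as required.

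The only real care-point -- and the closest thing to an obstacle -- is bookkeeping: I must keep the two descriptions of $\beta(J)$ aligned, namely the measure-theoretic $\mu(B_J)$ used in the necessity argument and the Möbius-inversion expression \eqref{e:beta-via-theta}/\eqref{e:beta-via-lambda} appearing in the hypothesis, their equality being precisely the content of Lemma~\ref{l:formulae}. I would also record the degenerate boundary case in which every $\beta(J)=0$: then $X^*=0$ almost surely and $\theta_{X^*}=\lambda_{X^*}\equiv 0$, which is consistent with the convention fixed earlier for the degenerate max-stable vector.
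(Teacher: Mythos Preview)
Your proof is correct and, for the ``only if'' direction, essentially identical to the paper's. For the ``if'' direction you take a cleaner route: you read $X^*$ directly as a de~Haan model with spectral functions $f_i(J)=\beta(J)1_J(i)$ over the finite measure space of subsets with counting measure, and then invoke \eqref{e:theta-lambda-f} from Corollary~\ref{c:theta-lambda-mu} to obtain $\theta_{X^*}(K)=\sum_{J:J\cap K\ne\emptyset}\beta(J)$ and $\lambda_{X^*}(L)=\sum_{J:L\subset J}\beta(J)$ in one line. The paper instead computes the spectral measure $\sigma^*=\sum_J\beta(J)\delta_{1_J}$ explicitly and then establishes $\mu^*(B_J)=\beta(J)$ via a set-inclusion-plus-total-mass argument before appealing to \eqref{e:theta-lambda-via-beta}. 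Your approach is shorter and avoids that detour; the paper's version, on the other hand, yields the extra information that $\mu^*(B_J)=\beta(J)$, which ties the construction back to \eqref{e:beta-via-mu}.

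One small point of precision: when you write ``comparison with \eqref{e:theta-lambda-via-beta}'' to conclude $\theta_{X^*}=\theta$, note that \eqref{e:theta-lambda-via-beta} in the paper is derived for a given max-stable vector, whereas here you need the purely combinatorial fact that the maps $\theta\mapsto\beta$ (via \eqref{e:beta-via-theta}) and $\beta\mapsto\theta$ (via $\theta(K)=\sum_{J:J\cap K\ne\emptyset}\beta(J)$) are mutual inverses on all set functions, not just those arising from a tail measure. You clearly have this in mind (you invoke Möbius bijectivity earlier), so it is only a matter of phrasing; citing the inversion as a formal identity rather than pointing to \eqref{e:theta-lambda-via-beta} would make the logic airtight.
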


The proof is given in Appendix \ref{sec:proofs}.  The vector $X^*$ defined in \eqref{e:TM-model} is referred to as the
Tawn-Molchanov or simply \emph{TM-model} associated with the extremal (tail-dependence) coefficients $\{\theta(K)\}$
($\{\lambda(L)\},$ respectively).

\begin{remark} The distribution of the random set $\Theta$ introduced in Section \ref{sec:exceedance-set} can be 
understood in terms of the Tawn-Molchanov model \eqref{e:TM-model} using the {\em single large jump} heuristic.  Given 
that  $\Theta_n = \{i\, :\, X^*_i>n\} \not=\emptyset$, for large $n$, only one of the $Z_J$'s is extreme enough to
contribute to the exceedance set.  Thus, with high probability, $\Theta_n$ equals the corresponding $J$ in 
\eqref{e:TM-model}.  The probability of the set $J$ to occur is asymptotically proportional to the weight $\beta(J)$,
which explains the formula \eqref{e:Theta-PMF}.
\end{remark}

We have seen in Section \ref{sec:bivariate-intro} that extremal dependence can also be measured in terms of spectral distance. In the following section we will explore further the connections between spectral distance and the just introduced Tawn-Molchanov models and see how the latter naturally lead to a decomposition of the former which is equivalent to $\ell_1$-embeddability.

\section{Embeddability and rigidity of the spectral distance}\label{sec:l1-emb}

So far, we have mainly considered the overall tail-dependence of $X$ or the tail-dependence function $\lambda(L)$ for 
arbitrary $L\subset [p]$. In this section we will focus on the bivariate dependence as in Section 
\ref{sec:bivariate-intro}. Specifically, we look at the spectral distance and prove that it is both $L^1$- and,
equivalently, $\ell_1$-embeddable. For special spectral distances, namely those corresponding to line metrics, we prove 
that they are rigid and completely determine the tail-dependence of a TM-model. 

\subsection{$L^1$-embeddability of the spectral distance}
Recall that a function $d:T\times T \to [0,\infty)$ on a non-empty set $T$ is called a {\em semi-metric} on $T$
if (i) $d(u,u) = 0$, $u\in T$ (ii) $d(u,v) = d(v,u),\ u,v\in T$ and {(iii)} $d(u,w)\le d(u,v)+d(v,w),\ u,v,w\in T$. The semi-metric is a metric if $d(u,v) = 0$ only if $u=v$.

\begin{definition} A semi-metric $d$ on a set $T$ is said to be $L^1(E,\nu)$-embeddable (or short 
$L^1$-embeddable, when the measure space is understood) if there exists a collection of functions $f_t\in L^1(E,\nu)$,  $t\in T$, such that $$ d(s,t)=\| f_s-  f_t\|_{L^1}=\int_E|f_s(u)-f_t(u)|\nu(du),\ s,t\in T.$$ 
\end{definition}
The concept of $L^1$-embeddability is extensively discussed in \cite{deza:1997}. An overview can also be found in \cite{Matousek13}. 
Our first theorem in this section shows that the spectral distance matrix $d_X$ of a max-stable vector $X$ as defined in \eqref{eq:dist} is $L^1$-embeddable.

\begin{theorem} \label{thm:L1-embed} \phantom{x}\begin{itemize}
    \item [(i)]  For a simple max-stable vector $X$ with bivariate tail-dependence coefficients $\lambda_{i,j} = \lambda_X(i,j)$, the spectral distance
\begin{equation}\label{e:d-via-X}
 d(i,j) := \lambda_{i,i} + \lambda_{j,j} - 2 \lambda_{i,j}
\end{equation} (see Definition~\ref{def:specdist} and \eqref{eq:d-lambda})
is an $L^1$-embeddable semi-metric. 

\item [(ii)] Conversely, for every $L^1$-embeddable semi-metric $d$ 
on $[p]$, there exists a simple max-stable vector $X$ such that \eqref{e:d-via-X} holds with 
$\lambda_{i,j} := \lambda_X(i,j),\ 1\le i,j\le p$.  Moreover, there exists a $c \geq 0$ such that $X$ may be chosen 
to have equal marginal distributions with $\|X_i\|_1 = c, i \in [p]$.

\item [(iii)] The semi-metric $d$ in parts (i) and (ii) is a metric if and only if $\P[X_i \not = X_j]=1$ for all $i\not=j$.
\end{itemize}
\end{theorem}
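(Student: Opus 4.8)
The engine for all three parts is the de Haan representation (Proposition~\ref{p:de-Haan}) combined with the integral formulas of Corollary~\ref{c:theta-lambda-mu}. Writing $X_i \stackrel{d}{=} I(f_i)$ with spectral functions $f_i\in L^1_+(E,\nu)$, Corollary~\ref{c:theta-lambda-mu} gives $\lambda_{i,j}=\int_E f_i\wedge f_j\,d\nu$ and $\lambda_{i,i}=\|X_i\|_1=\int_E f_i\,d\nu$. The elementary pointwise identity $a+b-2(a\wedge b)=|a-b|$, valid for all $a,b\ge 0$, then yields
\[
d(i,j)=\lambda_{i,i}+\lambda_{j,j}-2\lambda_{i,j}=\int_E\big(f_i+f_j-2(f_i\wedge f_j)\big)\,d\nu=\int_E|f_i-f_j|\,d\nu=\|f_i-f_j\|_{L^1}.
\]
This exhibits $d$ as an $L^1(E,\nu)$-embedding with embedding maps $f_i$, and the semi-metric axioms are inherited from the $L^1$-norm; this settles (i).

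For the converse in (ii), I would start from any embedding $d(i,j)=\|g_i-g_j\|_{L^1}$ with possibly signed $g_i\in L^1(E,\nu)$. Since only differences matter, I replace $g_i$ by $f_i:=g_i-\min_{1\le k\le p}g_k\ge 0$, which still lies in $L^1_+(E,\nu)$ and satisfies $f_i-f_j=g_i-g_j$, so $\|f_i-f_j\|_{L^1}=d(i,j)$. The de Haan vector $X_i:=I(f_i)$ is simple max-stable and, by the identity from (i), has spectral distance $d$. To force equal marginals I would symmetrize: on two disjoint copies of $(E,\nu)$ set $\tilde f_i:=\tfrac12 f_i$ on the first copy and $\tilde f_i:=\tfrac12(M-f_i)$ on the second, where $M:=\max_{1\le k\le p}f_k$. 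Then $\|\tilde f_i-\tilde f_j\|_{L^1}=\tfrac12 d(i,j)+\tfrac12 d(i,j)=d(i,j)$, while $\int\tilde f_i=\tfrac12\int_E M\,d\nu=:c$ is independent of $i$, so the associated de Haan vector has $\|X_i\|_1=c$ for all $i$ and spectral distance $d$.

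Part (iii) I would reduce to the zero-set characterization $d(i,j)=0\iff X_i=X_j$ almost surely. One direction is immediate: $\int_E|f_i-f_j|\,d\nu=0$ forces $f_i=f_j$ $\nu$-a.e., and hence $X_i=I(f_i)=I(f_j)=X_j$ a.s., since the extremal integral depends only on the $\nu$-equivalence class of its integrand. Conversely, if $X_i=X_j$ a.s.\ then $X_i=X_i\vee X_j$ a.s.; matching $1$-Fr\'echet scales and using max-linearity of $I(\cdot)$ gives $\int_E f_i\,d\nu=\|X_i\|_1=\|X_i\vee X_j\|_1=\int_E f_i\vee f_j\,d\nu$, so $\int_E(f_j-f_i)_+\,d\nu=0$ and $f_j\le f_i$ a.e.; the symmetric argument gives $f_i=f_j$ a.e., i.e.\ $d(i,j)=0$. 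As $d$ is already a semi-metric, it is a metric precisely when $d(i,j)>0$ for all $i\ne j$, which by the lemma means that no two distinct coordinates coincide almost surely.

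The step I expect to require the most care is the exact form of this separating condition. The argument above shows metricity is equivalent to $\P[X_i=X_j]<1$ (that is, $X_i\ne X_j$ with positive probability) for every $i\ne j$, which is strictly weaker than $\P[X_i\ne X_j]=1$. Indeed, the de Haan vector built from $f_1=(1,0,1)$, $f_2=(0,1,1)$ on three unit atoms (with iid standard $1$-Fr\'echet $Z_1,Z_2,Z_3$) has equal marginals and $d(1,2)=2>0$, yet $X_1=Z_1\vee Z_3$ and $X_2=Z_2\vee Z_3$ give $\P[X_1=X_2]=\P[Z_3\ge Z_1\vee Z_2]=\tfrac13\in(0,1)$. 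Hence I would formulate and prove (iii) with the condition $\P[X_i=X_j]<1$; obtaining the verbatim equivalence with $\P[X_i\ne X_j]=1$ would require a zero-one law for $\{X_i=X_j\}$ that fails in general, and this is where I locate the main obstacle.
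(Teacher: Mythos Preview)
Your arguments for (i) and (ii) are correct and follow the paper's route almost verbatim: the same pointwise identity $a+b-2(a\wedge b)=|a-b|$ for (i), and for (ii) the same ``shift-then-double'' trick. The only cosmetic difference is that the paper shifts by $g^*:=\max_k|g_k|$ and uses $g^*\pm g_i$ on two halves of $[0,1]$, whereas you first subtract $\min_k g_k$ and then pair $f_i$ with $M-f_i$; both produce nonnegative spectral functions with common $L^1$-norm.

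For (iii), your analysis is sharper than the paper's. The paper's proof actually establishes the equivalence you wrote down, namely
\[
d(i,j)=0 \iff f_i=f_j\ \nu\text{-a.e.} \iff I(f_i)=I(f_j)\ \text{a.s.} \iff \P[X_i=X_j]=1,
\]
and hence $d$ is a metric if and only if $\P[X_i=X_j]<1$ for all $i\ne j$. The theorem as stated asks for $\P[X_i\ne X_j]=1$, and your two-point example $X_1=Z_1\vee Z_3$, $X_2=Z_2\vee Z_3$ (built from $f_1=(1,0,1)$, $f_2=(0,1,1)$ on three unit atoms) is a genuine counterexample: $d(1,2)=2>0$, yet $\P[X_1=X_2]=\P[Z_3\ge Z_1\vee Z_2]=\tfrac13$. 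So there is no zero--one law here, and the verbatim condition in (iii) is too strong. Your proof of the corrected statement (including the converse via $\|X_i\|_1=\|X_i\vee X_j\|_1\Rightarrow\int(f_j-f_i)_+=0$) is clean and complete; the obstacle you flagged is real and lies in the paper's wording, not in your argument.
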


\begin{proof} 
Part (i): Suppose that $X=(X_i)_{1\leq i\leq p}$ is simple max-stable and let
$f_i\in L_+^1([0,1])$ be as in \eqref{e:de-Haan-fdd}, where for simplicity and without loss of generality we choose $\nu=$Leb. In view of Relation
\eqref{e:theta-lambda-f}, we obtain
$$
\lambda_X(i,j) = \int_{[0,1]} f_i(x) \wedge f_j(x) dx,\ i,j\in [p].
$$
Now the identity $|a-b| = a + b - 2(a\wedge b)$ implies
\begin{align}\label{e:d-via-fi-fj}
d(i,j)&:=  \int_{[0,1]} |f_i(x) - f_j(x)|dx =
       \int_{[0,1]} f_i(x)dx + \int_{[0,1]} f_j(x) dx - 2\int_{[0,1]} f_i(x)\wedge f_j(x)dx \nonumber \\
       &= \lambda_X(i,i) + \lambda_X(j,j) - 2\lambda_X(i,j).
\end{align}
This shows that the semi-metric in \eqref{e:d-via-X} is $L^1$-embeddable.  Note that
$d$ is a metric if and only if $f_i(\cdot)\not =f_j(\cdot)$, almost everywhere, or equivalently 
$X_i\not = X_j$ a.s., for all $i\not=j$.
\\

Part (ii): Suppose now that $d(i,j) = \| g_i - g_j\|_{L^1}$ for some 
$g_i\in L^1(E,\nu),\ i\in [p]$. For simplicity and without loss of generality, we can assume that $(E,{\cal E},\nu) = ([0,1],{\cal B}[0,1],{\rm Leb})$.  Define the function $g^*(x):= \max_{i\in [p]} |g_i(x)|$ and let
$$
f_i(x) = \left\{\begin{array}{ll}
     g^*(2x) - g_i(2x) &,\ x\in [0,1/2]  \\
     g^*(2x-1) + g_i(2x-1) &,\ x\in (1/2,1]. 
\end{array} \right.
$$
This way, we clearly have that the $f_i$'s are non-negative elements of $L^1([0,1])$ 
and
$$
 \|f_i - f_j\|_{L^1} = \|g_i-g_j\|_{L^1}= d(i,j),\ \ i,j\in[p].
$$
Letting $X_i := I(f_i)$ be the extremal integrals defined 
in \eqref{e:de-Haan}, we obtain as in \eqref{e:d-via-fi-fj} that 
$$
d(i,j) = \|f_i-f_j\|_{L^1}= \lambda_X(i,i) + \lambda_X(j,j) - 2\lambda_X(i,j),\ \ i,j\in[p].
$$
This proves the first claim in part (ii).  It remains to argue that (with this
particular choice of $f_i$'s) the scales of the $X_i$'s are all equal. 
Note that $\|X_i\|_1 = \|f_i\|_{L^1}$ and since 
\begin{align*}
&\int_0^{1/2} g^*(2x) - g_i(2x) dx  =  \frac{1}{2}\int_0^1 g^*(u) - g_i(u) du\\
&\int_{1/2}^{1} g^*(2x-1) + g_i(2x-1) dx  =   \frac{1}{2} \int_{0}^1 g^*(u)+g_i(u)du,
\end{align*}
we obtain
$\|X_i\|_1 = \|f_i\|_{L^1}  =  \int_0^{1} g^*(u) du,$ for all $i\in[p],$
which completes the proof of part (ii).\\

Part (iii): The claim follows from the observation that $X_i :=I(f_i)  =  I(f_j)=:X_j$ almost surely 
if and only if $f_i=f_j$ a.e., or equivalently, $\|f_i-f_j\|_{L^1}=0$.
\end{proof}

\begin{remark}
The construction in the proof of part (ii) of Theorem \ref{thm:L1-embed} still works for $f_i$ replaced by $\tilde{f}_i=f_i+\tilde{c}$ for any $\tilde{c}>0$. Thus, the constant $c$ can be chosen equal to or larger than $\int_0^{1} g^*(u) du$, where $g^*(x):= \max_{i\in [p]} |g_i(x)|$ and $g_i\in L^1(E,\nu),\ i\in [p]$ such that $d(i,j) = \| g_i - g_j\|_{L^1}$. In particular, for $\int_0^{1} g^*(u) du\leq 1$, one may choose $X$ with standardized marginals, i.e.\ $\|X_i\|_1=1, i \in [p]$.
\end{remark}

\subsection{$\ell_1$-embeddability of the spectral distance}
\label{sec:line-tm}

In Theorem \ref{thm:L1-embed} we have shown the equivalence between $L^1$-embeddable metrics and spectral distances of 
simple max-stable vectors. In this section, we will additionally state an explicit formula for the $\ell_1$-embedding of 
the spectral distance. Thereby we show that $L^1$- and $\ell_1$-embeddability are equivalent and, in passing, we 
recover and provide novel probabilistic interpretations of the so-called cut-decomposition of $\ell_1$-embeddable metrics \citep{deza:1997}.
\begin{definition}\label{Def:linemetric}
A semi-metric $d$ on $T$ is said to be $\ell_1$-embeddable in $(\R^m, \|\cdot\|_{\ell_1})$ (or short $\ell_1$-embeddable) for some integer $m\geq 1$ if there exist $x_t=(x_t(k))_{1\leq k\leq m}\in \R^m$, $t\in T$, such that $$d(i,j)=\|x_i-x_j\|_{\ell_1}=\sum_{k=1}^{m}|x_i(k)-x_j(k)| \quad \text{ for all } \quad i,j\in T.$$
\end{definition}

\begin{prop}\label{p:d-via-cuts} A semi-metric $d$ on the finite set $[p]$ is embeddable in $L^1(E,{\cal E},\nu)$ if and only if 
\begin{equation}\label{e:d-via-cuts}
d(i,j) = \sum_{J\, :\, \emptyset\not=J\subset [p]} \beta(J) | 1_{J}(i) - 1_{J}(j)|,\ \ i,j\in [p],
\end{equation}
for some non-negative $\beta(J)$'s. 
This means that $d$ is $L^1$-embeddable if and only if it is $\ell_1$-embeddable in $\R^m$,
where $m = |{\cal J}|$ and ${\cal J} = \{ \emptyset \neq J\subset [p]\, :\, \beta(J)>0\}$. 
Indeed, \eqref{e:d-via-cuts} is equivalent to  $d(i,j) = \|x_i - x_j\|_{\ell_1}$, with 
$x_i = (x_i(J))_{J\in {\cal J}} := (\beta(J)1_J(i))_{J\in {\cal J}} \in \R_+^m,\ i,j\in[p]$. 
\end{prop}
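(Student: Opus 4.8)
The plan is to prove the two implications separately and then read off the $\ell_1$-statement and the explicit embedding as immediate corollaries. For the direction ``$\Leftarrow$'', suppose \eqref{e:d-via-cuts} holds with non-negative coefficients $\beta(J)$, and let ${\cal J} = \{\emptyset \neq J \subset [p] : \beta(J) > 0\}$ and $m = |{\cal J}|$. Setting $x_i := (\beta(J) 1_J(i))_{J \in {\cal J}} \in \R_+^m$, one computes directly that
$$
\|x_i - x_j\|_{\ell_1} = \sum_{J \in {\cal J}} \beta(J)\,|1_J(i) - 1_J(j)| = d(i,j),
$$
so $d$ is $\ell_1$-embeddable in $\R^m$; since $(\R^m, \|\cdot\|_{\ell_1})$ is isometric to $L^1$ of $\{1,\dots,m\}$ with counting measure, $d$ is also $L^1$-embeddable. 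This simultaneously verifies the last two sentences of the statement.

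For the converse ``$\Rightarrow$'', the idea is to let the max-stable machinery of the previous subsection manufacture the non-negative coefficients. Given an $L^1$-embeddable semi-metric $d$ on $[p]$, Theorem \ref{thm:L1-embed}(ii) provides a simple max-stable vector $X = (X_i)_{1 \leq i \leq p}$ with
$$
d(i,j) = \lambda_X(i,i) + \lambda_X(j,j) - 2\lambda_X(i,j), \qquad i,j \in [p].
$$
Associated with $X$ are the coefficients $\beta(J) = \mu(B_J) \geq 0$ from \eqref{e:beta-via-mu}, which are finite (each $B_J$ is bounded away from $0$ for $J \neq \emptyset$, so $\mu(B_J) \le \mu(A_i) = \|X_i\|_1 < \infty$ for any $i \in J$) and satisfy the representation $\lambda_X(L) = \sum_{J : L \subset J}\beta(J)$ recorded in \eqref{e:theta-lambda-via-beta}. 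Substituting this into the displayed formula for $d$ leaves only an elementary bookkeeping step.

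That bookkeeping is the heart of the argument, though it is routine: writing $\lambda_X(i,i) = \sum_{J : i \in J}\beta(J)$, $\lambda_X(j,j) = \sum_{J : j \in J}\beta(J)$ and $\lambda_X(i,j) = \sum_{J : \{i,j\} \subset J}\beta(J)$, the coefficient of a fixed $\beta(J)$ in $d(i,j)$ is
$$
1_{\{i \in J\}} + 1_{\{j \in J\}} - 2\cdot 1_{\{i \in J\}}1_{\{j \in J\}},
$$
which equals $1$ when exactly one of $i,j$ lies in $J$ and $0$ otherwise, i.e.\ precisely $|1_J(i) - 1_J(j)|$. Hence $d(i,j) = \sum_{\emptyset \neq J \subset [p]} \beta(J)\,|1_J(i) - 1_J(j)|$ with $\beta(J) \geq 0$, which is \eqref{e:d-via-cuts}; note that the summand vanishes when $J = [p]$, so whether or not $J=[p]$ is included in the index set is immaterial.

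The only genuine subtlety is securing the non-negativity of the coefficients, and this is exactly what routing through a bona fide max-stable realization buys us. I expect the main obstacle to be precisely this positivity, since the cut-decomposition of an $\ell_1$-metric is never unique and one must exhibit a nonnegative choice. A purely analytic ``layer-cake'' alternative would also work and be self-contained: expanding $\int_E |g_i - g_j|\,\nu(du) = \int_E\!\int_{\R} |1_{\{g_i(u) > t\}} - 1_{\{g_j(u) > t\}}|\,dt\,\nu(du)$ and grouping by the level set $J(u,t) = \{i : g_i(u) > t\}$ yields the decomposition with $\beta(J) = (\nu \otimes \mathrm{Leb})\{(u,t) : J(u,t) = J\} \geq 0$, with finiteness of each $\beta(J)$ following from $\beta(J) \le d(i,k) < \infty$ for any $i \in J$, $k \notin J$. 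Either route, combined with the ``$\Leftarrow$'' direction above, establishes the asserted equivalence of $L^1$- and $\ell_1$-embeddability.
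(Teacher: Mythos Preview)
Your proof is correct and follows essentially the same route as the paper. Both arguments invoke Theorem~\ref{thm:L1-embed} to pass from $L^1$-embeddability to a max-stable realization $X$, and then extract the cut-decomposition from the coefficients $\beta(J)=\mu(B_J)\ge 0$. The only cosmetic difference is that the paper phrases the key computation via the random exceedance set $\Theta$, writing $d(i,j)=\theta_X([p])\,\E\big[\,|1_\Theta(i)-1_\Theta(j)|\,\big]$ and then expanding the expectation using $\P[\Theta=J]=\beta(J)/\theta_X([p])$; you instead plug the identity $\lambda_X(L)=\sum_{J\supset L}\beta(J)$ from \eqref{e:theta-lambda-via-beta} directly into $d(i,j)=\lambda_X(i,i)+\lambda_X(j,j)-2\lambda_X(i,j)$ and read off the coefficient of each $\beta(J)$. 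These are the same calculation.

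Your layer-cake alternative is a genuinely different and more elementary argument: it bypasses the max-stable machinery entirely and recovers the cut-decomposition directly from the $L^1$-embedding by slicing each $|g_i-g_j|$ into indicator differences. This is self-contained and would work even without Sections~\ref{sec:notation}--\ref{sec:TM}; the paper's route, by contrast, buys the probabilistic interpretation \eqref{e:d-ij-via-Theta} of the cut coefficients as (rescaled) probabilities of the limiting exceedance set, which is one of the paper's themes. One small point in your layer-cake aside: $\beta([p])$ (and $\beta(\emptyset)$) can be infinite under that definition, but since their summands vanish you may simply redefine $\beta([p]):=0$, as you implicitly note.
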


\begin{proof}  By Theorem \ref{thm:L1-embed}, $d$ is $L^1$-embeddable if and only if \eqref{e:d-via-X} holds, where $\lambda_{i,j}=\lambda_X(\{i,j\})$ for some simple max-stable
random vector. 
In view of \eqref{e:lambda-via-the-set-Theta} for the special case of $J=\{i\}$, using that 
$\P[J\subset \Theta] = \E[ 1_{\{ J\subset \Theta\}}]$, we have
\begin{equation}\label{e:d(K,L)-via-Theta}
\frac{1}{\theta[p]} \cdot  d(i,j) = \P[i\in \Theta] + \P[j\in \Theta] - 2\P[\{i,j\} \subset \Theta]
=\E[ | 1_{\{i\in \Theta\}} -1_{\{j\in \Theta\}}|].
\end{equation}
Taking $X^*$ to be the (generalized) TM-model with matching extremal coefficients to those of $X$, by Relations \eqref{e:Theta-PMF} and \eqref{e:d(K,L)-via-Theta} we obtain \eqref{e:d-via-cuts}.
\end{proof}

\begin{remark}
Equation \eqref{e:d(K,L)-via-Theta} shows that the spectral distance $d$ is proportional to the probability that
the limiting exceedance set $\Theta$ covers \textit{one and only one} of the points $i$ and $j$.
\end{remark}

\begin{remark}\label{Rem:L1isl1}
Proposition \ref{p:d-via-cuts} recovers the well-known result that $L^1-$ and $\ell_1-$embeddability are equivalent \citep[see Theorem 4.2.6 in][]{deza:1997}.  
\end{remark}

Proposition \ref{p:d-via-cuts} also 
provides a \textit{probabilistic} interpretation of the so-called cut-decomposition of $\ell_1$-embeddable metrics.  To connect to the rich literature on the subject, we will
introduce some terminology following Chapter 4 of the monograph of \cite{deza:1997}. 

Let $J\subset [p]$ be a non-empty set and define the so-called {\em cut semi-metric}:
\begin{equation}\label{e:delta-J}
\delta(J)(i,j) = \left\{ \begin{array}{ll}
 1 &, \mbox{ if $i\not = j$ and } |J\cap \{i,j\}| = 1\\
 0 &,\ \mbox{ otherwise}.
\end{array}\right.
\end{equation}
The positive cone {\rm CUT}$_p:=\{ \sum_{J\subset [p]} c_J \delta(J),\ c_J\ge 0\}$ is referred
to as the {\em cut cone} of non-negative functions defined on $[p]$. Notice that CUT$_p$ consists of semi-metrics. Therefore, Proposition \ref{p:d-via-cuts} entails that the cut cone CUT$_p$ 
comprises all $\ell_1$-embeddable metrics on $p$ points \citep[Proposition 4.2.2 in][]{deza:1997}.  
Relation \eqref{e:d-via-cuts}, moreover, provides  a decomposition of any such metric as 
a positive linear combination of cut semi-metrics.  The coefficients of this decomposition 
are precisely the coefficients of \textit{some} Tawn-Molchanov model.  Finally, in view
of \eqref{e:d(K,L)-via-Theta}, the random exceedance set $\Theta$ of this TM-model is 
such that
\begin{equation}\label{e:d-ij-via-Theta}
d(i,j) = \theta([p]) \cdot \E[ |1_{\Theta}(i) - 1_{\Theta}(j)|].
\end{equation}

\begin{remark}
For a given spectral distance $d$, Proposition \ref{p:d-via-cuts} provides a decomposition and thereby shows the $\ell_1$-embeddability of $d$ in $\R^m$, where $m = |{\cal J}|$ and ${\cal J} = \{ \emptyset \neq J\subset [p]\, :\, \beta(J)>0\}$. Without further knowledge about the number of $J$ such that $\beta(J)>0$ we can always choose $m=2^p-2$, since we may set $\beta([p])=0$ as it does not affect $d$. However, by Caratheodory's theorem each $\ell_1$-embeddable
metric on $[p]$ is in fact known to be $\ell_1$-embeddable in $\mathbb{R}^{\binom{p}{2}}$, see \citep[Proposition 1.4.2]{Matousek13}. We would like to mention that finding the corresponding ``minimal'' TM-model (i.e.\ the one with minimal $|\mathcal{J}|$) and analyzing the properties of such representations could be an interesting topic for further research. 
\end{remark}

Observe that
$$
\delta(J)(i,j) = |1_J(i) - 1_J(j)| = |1_{J^c}(i) - 1_{J^c}(j)| = \delta(J^c)(i,j),\ \ i, j\in [p], 
$$
where $J^c = [p]\setminus J$, which implies that,  in general, the decomposition of $d$ in Proposition \ref{p:d-via-cuts} is not unique. Furthermore, $\beta([p]) \geq 0$ does not affect $d$ in \eqref{e:d-via-cuts}, since $|1_{[p]}(i)-1_{[p]}(j)|=0$. The next definition guarantees that, apart from those unavoidable ambiguities, the representation in \eqref{e:d-via-cuts} is essentially unique.
\begin{definition}\label{d:rigid}
An $\ell_1$-embeddable
metric $d$ is said to be {\em rigid} if for any two representations 
$$d(i,j) = \sum_{J\, :\, \emptyset\not=J\subset [p]} \beta(J) | 1_{J}(i) - 1_{J}(j)|,\ \ i,j\in [p], $$
and
$$d(i,j) = \sum_{J\, :\, \emptyset\not=J\subset [p]} \tilde{\beta}(J) | 1_{J}(i) - 1_{J}(j)|,\ \ i,j\in [p], $$ 
with non-negative $\beta(J), \tilde{\beta}(J), \emptyset \neq J \subset[p],$ the equality
$$ \beta(J)+\beta(J^c)=\tilde{\beta}(J)+\tilde{\beta}(J^c) $$
holds for all $\emptyset \neq J \subsetneq[p]$.
\end{definition}
Observe that each semimetric $d$ on $p$ points can be identified with a vector $d = (d(i,j),\ 1\le i < j\le p)$ in
$\R^N$, where $N:= {p\choose 2}$.  Thus, sets of such semimetrics can be treated as subsets of the Euclidean space
$\R^N$. By Corollary 4.3.3 in \cite{deza:1997}, the metric $d$ is rigid, if and only if it lies on a 
{\em simplex face}  of the cut-cone ${\rm CUT}_p$.  That is, if and only if the set 
$\{J_1,\cdots,J_m\}=\{\emptyset \neq J \subset [p]: \beta(J)>0\}$ is such that the cut semimetrics 
$\delta(J_i),\ i=1,\cdots,m$ (defined in \eqref{e:delta-J}) lie on an
affinely independent face of ${\rm CUT}_p$.  
Recall that the points $\delta_i\in\mathbb R^N,\ i=1,\cdots,m$ are affinely independent  if and only if
$\{\delta_i-\delta_1,\ i=2,\cdots,m\}$ are linearly independent. 
In general, the description of the faces of the cut-cone is challenging, but the next section deals with a special class of metrics which are always rigid. 

\subsection{Rigidity of line metrics}
In this section we show that so-called line metrics are rigid (cf. Definition \ref{d:rigid})
and that for spectral distances corresponding to line metrics the bivariate tail-dependence coefficients, in combination with the marginal distribution, fully determine the higher order tail-dependence coefficients of the underlying random vector and thus the coefficients of the corresponding Tawn-Molchanov model. 
\begin{definition}
A metric $d$ on $[p]$ is said to be a \emph{line metric} if there exist a permutation $\pi=(\pi_i)_{1\leq i\leq p}$ of $[p]$ and some weights $w_k\geq 0$, $1\leq k\leq p-1$, such that $$d(\pi_i,\pi_j)=\sum_{k=i}^{j-1} w_k.$$
\end{definition}
In other words, $d$ is a line metric if all points of $[p]$ can be ordered with different distances on some line and the distance between any two points equals the distance along that line.

\begin{theorem}\label{thm:line-tm} Let $d$ be a line metric, where without loss of generality the indices are ordered in such a way that 
	for all $1\le i < j\le p$ and some $w_k\ge 0$
	\begin{equation}\label{e:d-line}
		d(i,j) = \sum_{k=i}^{j-1} w_k.\ \ 
	\end{equation}
	\begin{itemize}	
	\item[(i)] The line metric $d$ is $\ell_1$-embeddable and rigid.
	\end{itemize}
Assume in addition that $X$ follows a (generalized) TM-model as in \eqref{e:TM-model} with given univariate $\lambda(i) = \lambda_{i,i}$
and bivariate tail-dependence coefficients $\lambda(i,j)=\lambda_{i,j}$ satisfying \eqref{e:d-via-X} with $d$ as in \eqref{e:d-line}. Then:
	\begin{itemize}
	\item[(ii)] For every non-empty set $J\subset [p]$, we have
	\begin{equation}\label{e:lambda-J-line}
		\lambda (J) = \lambda(i,j),\ \ \mbox{ where } i = \min (J) \mbox{ and }j=\max(J).
	\end{equation}
	\item [(iii)] For the coefficients $\beta(J)$ of the (generalized) TM-model, we have that for all 
	$1\le k\le p-1$,
	\begin{equation}\label{e:beta-J-formula}
	 \beta([1:k]) =\lambda(k)-\lambda(k,k+1),\ \ \beta([k+1:p]) 
	 = \lambda(k+1)-\lambda(k,k+1),
	 \end{equation}
	 where $[i:j]:=\{i, i+1, \ldots, j-1,j\}, i<j \in [p],$
	 \begin{equation}\label{e:beta([p])} \beta([p])=\lambda(1,p),\end{equation}
	 and $\beta(J)=0$ for all other $J\subset [p]$.
	\end{itemize}

\end{theorem}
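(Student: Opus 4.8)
The plan is to prove (i) as a statement about the metric $d$ alone, then use rigidity to force the support of the Tawn-Molchanov coefficients $\beta(J)$, after which (ii) and (iii) drop out from the resulting interval structure of the random set $\Theta$.

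For (i) I would first record an explicit embedding: placing point $i$ at $x_i := \sum_{k=1}^{i-1} w_k \in \R$ gives $|x_i - x_j| = \sum_{k=i}^{j-1} w_k = d(i,j)$ for $i<j$, so $d$ is $\ell_1$-embeddable in $\R$; equivalently, $d = \sum_{k=1}^{p-1} w_k \,\delta([1:k])$ is a cut decomposition with non-negative coefficients (which also re-derives $\ell_1$-embeddability through Proposition~\ref{p:d-via-cuts}). For rigidity I would invoke the simplex-face criterion (Corollary 4.3.3 in \cite{deza:1997}) recalled just before the statement: it suffices to check that the interval cuts $\delta([1:k])$, $1\le k\le p-1$, are affinely independent. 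The decisive computation is that on the $p-1$ consecutive pairs $(k,k+1)$ one has $\delta([1:m])(k,k+1)=1$ precisely when $m=k$ and $0$ otherwise, so the incidence matrix restricted to these pairs is the identity. Hence the $\delta([1:k])$ are linearly, and therefore affinely, independent, placing $d$ on a simplex face of ${\rm CUT}_p$ and giving rigidity.

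Next I would use rigidity to restrict the support of $\beta$. Since $X$ is a TM-model reproducing $d$, its coefficients $\beta(J)$ form a cut decomposition of $d$ via Proposition~\ref{p:d-via-cuts}. Comparing this with the interval decomposition $\tilde\beta$ from (i), where $\tilde\beta([1:k])=w_k$ and $\tilde\beta=0$ otherwise, Definition~\ref{d:rigid} yields $\beta(J)+\beta(J^c)=\tilde\beta(J)+\tilde\beta(J^c)$ for every $\emptyset\neq J\subsetneq[p]$. For any cut $\{J,J^c\}$ not of the form $\{[1:k],[k+1:p]\}$ the right-hand side vanishes, forcing $\beta(J)=\beta(J^c)=0$ by non-negativity. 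Since $\beta([p])$ does not affect $d$, it follows that $\beta$ is supported on the prefixes $[1:k]$ ($1\le k\le p-1$), the suffixes $[k+1:p]$ ($1\le k\le p-1$), and the full set $[p]$. In particular, using $\P[\Theta=J]=\beta(J)/\theta([p])$ from \eqref{e:Theta-PMF}, the set $\Theta$ is almost surely an interval $[i:j]$ with $i=1$ or $j=p$.

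Part (ii) then follows immediately: for nonempty $J$ with $i=\min(J)$ and $j=\max(J)$, any such interval containing $\{i,j\}$ must contain all of $[i:j]\supseteq J$, so $\{J\subset\Theta\}=\{\{i,j\}\subset\Theta\}$; applying \eqref{e:lambda-via-the-set-Theta} gives $\lambda(J)=\theta([p])\,\P[J\subset\Theta]=\theta([p])\,\P[\{i,j\}\subset\Theta]=\lambda(i,j)$. For part (iii), the support structure gives the set identities $\{k\in\Theta,\,k+1\notin\Theta\}=\{\Theta=[1:k]\}$ and $\{k+1\in\Theta,\,k\notin\Theta\}=\{\Theta=[k+1:p]\}$, while $\{1,p\}\subset\Theta$ holds only for $\Theta=[p]$. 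Multiplying the corresponding probabilities by $\theta([p])$ and using $\beta(J)=\theta([p])\,\P[\Theta=J]$ together with \eqref{e:lambda-via-the-set-Theta} yields $\beta([1:k])=\lambda(k)-\lambda(k,k+1)$, $\beta([k+1:p])=\lambda(k+1)-\lambda(k,k+1)$, and $\beta([p])=\lambda(1,p)$, with $\beta(J)=0$ for all other $J$. I expect the rigidity argument in (i) — proving linear independence of the interval cuts and reducing an arbitrary cut decomposition to the interval one — to be the main obstacle; once $\Theta$ is known to be an interval, parts (ii) and (iii) are short.
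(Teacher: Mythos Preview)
Your approach to parts (ii) and (iii) via the exceedance set $\Theta$ is correct and clean once the support of $\beta$ has been restricted to prefixes, suffixes, and $[p]$; the paper argues directly with the sums $\sum_{K:J\subset K}\beta(K)$ rather than through $\Theta$, but the content is the same.

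The gap is in your rigidity argument for part (i). You claim it suffices to check that the cuts $\delta([1:k])$, $1\le k\le p-1$, are affinely independent, but that is not what the simplex-face criterion asserts. Rigidity holds if and only if the \emph{smallest face} of ${\rm CUT}_p$ containing $d$ is simplicial; equivalently, no cut other than the $\delta([1:k])$ (or their complements) can occur with positive weight in \emph{any} cut decomposition of $d$. Exhibiting one decomposition with linearly independent cuts does not establish this: already in ${\rm CUT}_4$ an interior point can be written as a positive combination of a linearly independent set of cuts while a second, different decomposition exists, so such a point is not rigid. What you are missing is precisely the statement you later try to deduce \emph{from} rigidity, namely that $\beta(J)>0$ forces $J$ to be a prefix or a suffix --- so your argument becomes circular. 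The paper proves this support restriction directly and elementarily: from the line-metric identity $d(i,j)=\sum_{k=i}^{j-1}d(k,k+1)$ together with the pointwise inequality $\sum_{k=i}^{j-1}|1_J(k)-1_J(k+1)|\ge |1_J(i)-1_J(j)|$, non-negativity of all coefficients forces equality for every $J$ with $\beta(J)>0$ and every $i<j$, which pins $J$ down to an interval containing $1$ or $p$. Once that is shown, $\beta([1:k])+\beta([k+1:p])=d(k,k+1)=w_k$ is forced and rigidity follows; the support restriction is then already in hand for (ii) and (iii) without invoking Definition~\ref{d:rigid}.
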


\begin{proof} {\it Part (i):} To see that $d$ is $\ell_1$-embeddable, set $\beta([1:k])=w_k, k \in [p-1],$ and $\beta(J)=0$ for all other sets $\emptyset \neq J \subset [p]$, which gives
  $$ d(i,j)=\sum_{k=i}^{j-1} w_k=\sum_{k=i}^{j-1} \beta([1:k])=\sum_{J\, :\, \emptyset\not=J\subset [p]} \beta(J) | 1_{J}(i) - 1_{J}(j)|=\sum_{J\, :\, \emptyset\not=J\subset [p]} \beta(J) \delta(J).$$
  Thus, $d$ is $\ell_1$-embeddable by Proposition \ref{p:d-via-cuts}. 
  
  Let now $\beta(J), \emptyset \neq J \subset [p]$ be the coefficients of a representation \eqref{e:d-via-cuts} of $d$. We will show that  \begin{equation}\label{e:Jwithbetapos} \beta(J)>0 \;\; \Rightarrow \;\; J=[1:k]  \mbox{ or } J=[k:p] \mbox{ for some } k \in [p]. \end{equation}
  To this end, note that \eqref{e:d-line} implies, for any $i\leq j \in [p]$, that
  $d(i,j)=\sum_{k=i}^{j-1}d(k,k+1)$ and thus
  $$ \sum_{J\, :\, \emptyset\not=J\subset [p]} \beta(J) | 1_{J}(i) - 1_{J}(j)| = \sum_{k=i}^{j-1} \sum_{J\, :\, \emptyset\not=J\subset [p]} \beta(J) | 1_{J}(k) - 1_{J}(k+1)|,$$
  or, equivalently,
  \begin{equation}\label{e:betacoeff0} \sum_{J\, :\, \emptyset\not=J\subset [p]} \beta(J) \left(| 1_{J}(i) - 1_{J}(j)| - \sum_{k=i}^{j-1} | 1_{J}(k) - 1_{J}(k+1)|\right) =0. \end{equation}
  Since 
  $$ \sum_{k=i}^{j-1} | 1_{J}(k) - 1_{J}(k+1)| \geq | 1_{J}(i) - 1_{J}(j)| $$
  and all $\beta(J)$ are non-negative, \eqref{e:betacoeff0} implies that 
  $$ | 1_{J}(i) - 1_{J}(j)| = \sum_{k=i}^{j-1} | 1_{J}(k) - 1_{J}(k+1)| $$ 
  for those $J$ with $\beta(J)>0$ and all $i \leq j \in [p]$. This leads to the following four possible cases:
  \begin{itemize}
      \item [(i)] If $1, p \in J$, then $J=[p]$.
      \item[(ii)] If $1, p \in J^c$, then $J = \emptyset$.
      \item[(iii)] If $1 \in J, p \in J^c$, then there exists one $k \in [p]$ such that $J=[1:k]$.
      \item[(iii)] If $1 \in J^c, p \in J$, then there exists one $k \in [p]$ such that $J=[k:p]$. 
  \end{itemize}
  We have thus shown \eqref{e:Jwithbetapos} and in order to show that $d$ is rigid, we only need to consider sets of the form $J=[1:k], J^c=[k+1:p], k \in [p-1]$. For those sets we get
  \begin{equation}\label{e:betasandws} \beta([1:k])+\beta([k+1:p])= \sum_{J:\emptyset\not=J\subset [p]} \beta(J) | 1_{J}(k) - 1_{J}(k+1)|= d(k,k+1)=w_k, \end{equation}
  and thus the sum $\beta(J)+\beta(J^c)=w_k$ is invariant
  for all representations \eqref{e:d-via-cuts} of $d$ and $d$ is rigid.
  
{\it Part (ii):} Let $\emptyset \neq J \subset [p]$ and set $i=\min(J), j=\max (J)$. Then, from part {\it (i)} and \eqref{e:theta-lambda-via-beta},
\begin{eqnarray*}
\lambda(J)&=&\sum_{K: J \subset K}\beta(K) = \sum_{k \in [p]: J \subset [1:k]}\beta([1:k])+\sum_{k \in [2:p]: J \subset [k:p]}\beta([k:p])\\
&=& \sum_{k=j}^p\beta([1:k])+\sum_{k=1}^i\beta([k:p])\\
&=&\sum_{k \in [p]: i,j \in [1:k]}\beta([1:k])+\sum_{k \in [2:p]: i,j  \in [k:p]}\beta([k:p])=\lambda(\{i,j\})=\lambda(i,j),
\end{eqnarray*}
where we used the fact that $\beta(J) = 0$, for all $J\subset [2:p-1]$ established in
the proof of part {\it (i)}. This completes the proof of {\it (ii)}.\\

{\it Part (iii):}
We have from \eqref{e:betasandws} that
$$ \beta([1:k])+\beta([k+1:p])= d(k,k+1) = \lambda(k)+\lambda(k+1)-2\lambda(k,k+1),$$
and it follows for $k \in [1:p-1]$ by {\it (i)} and \eqref{e:theta-lambda-via-beta} that
\begin{eqnarray*}
\lambda(k)-\lambda(k+1) 
&=& \sum_{k \in J}\beta(J)-\sum_{k+1 \in J}\beta(J) \\
&=&\sum_{j=k}^p\beta([1:j])+\sum_{j=1}^k\beta([j:p])-\sum_{j=k+1}^p\beta([1:j])-\sum_{j=1}^{k+1}\beta([j:p])\\
&=& \beta([1:k])-\beta([k+1:p]).
\end{eqnarray*}
Together, this gives \eqref{e:beta-J-formula}. Furthermore, \eqref{e:beta([p])} follows from
$$ \lambda(1,p)=\sum_{J: 1,p \in J}\beta(J)=\beta([1:p]).$$
That $\beta(J)=0$ if $J$ is not of the form $[1:k]$ or $[k:p], k \in p,$ has already been shown in $(i)$.

\end{proof}
\begin{remark} Consider a max-stable vector $X$ with standard $1$-Fr\'echet marginals, i.e., $\|X_i\|_1 
= \lambda_X(i) = 1,\ i\in [p]$.  Theorem \ref{thm:line-tm} shows that if the spectral distance
$d_X(i,j)= 2(1-\lambda_X(i,j)),\ i,j\in[p]$ is a line metric on $[p]$, then 
$$
\beta([1:k]) =\beta([k+1:p]) = 1-\lambda_X(k,k+1),\ 1\le k\le p-1,\;\; \beta([1:p])=\lambda_X(1,p),
$$
{and for all other $\emptyset \neq J \subset [p], \beta(J)=0$.}
In particular, all higher order extremal  coefficients of $X$ are then completely determined by the bivariate tail-dependence coefficients and
given from \eqref{e:theta-lambda-via-beta} by 
\begin{eqnarray*}
\theta_X(K) &=& \sum_{J:J \cap K \neq \emptyset}\beta(J)=\sum_{j= \min K }^p \beta([1:j])+ \sum_{j=1}^{\max K} \beta([j:p])-\beta([1:p]) \\
&=& \sum_{j= \min K }^p (1-\lambda_X(j,j+1)) + \sum_{j=1}^{\max K} (1-\lambda_X(j,j+1))-\lambda_X(1,p).
\end{eqnarray*}
\end{remark}

\begin{remark}
The random set $\Theta$ corresponding to such line-metric tail-dependence is a random segment with one of its endpoints anchored at $1$ or $p$. This is a direct consequence of the characterisation of $\beta(J)$ in from Theorem \ref{thm:line-tm}~{\it (iii)} and \eqref{e:Theta-PMF}.
\end{remark}

\begin{remark}
In practical applications, the non-parametric inference on higher-order tail-de-pendence coefficients can be very challenging or virtually impossible. Only, say, the bivariate tail-dependence coefficients $\Lambda = (\lambda_X(i,j))_{p\times p}$ of the vector $X$ may be estimated well. Given such constraints, one may be interested in providing upper and lower bounds on $\lambda_X(\{1,\cdots,p\})$, which provide the worst- and best-case scenarios for the probability of simultaneous extremes.  

If the spectral distance turns out to be a line metric and the marginal distributions are known, then Theorem \ref{thm:line-tm} provides a way to precisely calculate $\lambda_X(\{1,\cdots,p\})$. 
However, in general this problem falls in the framework of computational risk management 
\citep[see e.g.][]{embrechts:puccetti:2010} as well as the distributionally robust inference
perspective \citep[see, e.g.][and the references therein]{yuen:2020}.  The problem can be stated as a linear optimization problem in dimension $2^p-1$, similar to the approach in \cite{yuen:2020}.  Unfortunately, the exponential growth of complexity of the problem makes it computationally 
intractable for $p\ge 15$. In fact, the exact solution to such types of optimization problems may be NP-hard. This underscores the importance of the line of research initiated by 
\cite{Shyamalkumar2020} where new approximate solutions or model-regularized approaches to 
distributionally robust inference in high-dimensional extremes are of great interest.
\end{remark}


\section{Computational complexity of decision problems}\label{sec:NP}

In this section we will use known results about the algorithmic complexity of $\ell_1$-embeddings to derive that the so-called tail dependence realization problem is NP-complete, thereby confirming a conjecture from \cite{Shyamalkumar2020}. While a formal introduction to the theory of algorithmic complexity is beyond the scope of this paper, we shall informally recall the basic notions needed in our context following the 
treatment in \citep[Section 2.3]{deza:1997}.

Consider a class of computational problems $D$, where each instance ${\cal I}$ of $D$ can be 
encoded with a finite number of bits $|{\cal I}|$. $D$ is said to be a decision problem, if 
for any input instance ${\cal I}$ there is a correct answer, which is either ``yes'' or ``no''.  
The goal is to determine this answer based on any input ${\cal I}$ by using a computer (i.e., a deterministic Turing machine).

The decision problem $D$ is said to belong to:
\begin{itemize}
    \item The class {\bf P}  (for polynomial complexity), if there is an algorithm (i.e., a deterministic Turing machine), 
that can produce the correct answer in polynomial time, i.e. its running time is of the order $\mathcal{O}(|{\cal I}|^k)$ for some $k \in \mathbb{N}$. 

 \item The class {\bf NP} (nondeterministic polynomial time) if the problem admits a 
 polynomially-verifiable positive certificate.  More precisely, this means that for each 
instance ${\cal I}$ of $D$ with positive (``yes'') answer, there exists a finite-bit certificate ${\cal C}$ of size $|{\cal C}|$ that can be 
verified by an algorithm / deterministic Turing machine with running time $\mathcal{O}(|{\cal C}|^l)$ for some $l \in \mathbb{N}$. (The certificate needs not be constructed in polynomial time.)

\item The class {\bf NP-hard} if \textit{any} problem in NP reduces to $D$ in 
polynomial time. This means that for every problem $D'$ in NP, the correct answer to this decision problem for any instance $\mathcal{I}'$ of $D'$ can be found by first applying an algorithm that runs in polynomial time of $|\mathcal{I'}|$ to transform $\mathcal{I}'$ into an instance $\mathcal{I}$ of $D$ and then solve the decision problem $D$ for this instance $\mathcal{I}$. Note that this definition does not require that $D$ itself is in NP.

\item The class {\bf NP-complete} if $D$ is both in NP and is NP-hard.

\end{itemize}

A decision problem which has received some attention recently, see \cite{fiebig:2017}, \cite{embrechts:hofert:wang:2016}, \cite{krauseetal2018}, and \cite{Shyamalkumar2020}, is the realization problem of a TD matrix with standardized entries on the diagonal, namely finding an algorithm with the following input and output:
\begin{framed}
\underline{Tail dependence realization (TDR) problem} 
\begin{itemize}
	\item Input: A non-negative, symmetric $p \times p$-matrix $L=(L_{i,j})$ with $L_{i,i}=1, i \in [p]$.
	\item Output: An answer to the question: Does there exist a simple max-stable vector $X=(X_1, \ldots, X_p)$ such that
	$$ \lambda_X(i,j)=L_{i,j}, \;\;\; i,j \in [p],$$
	i.e.\ $L$ is the matrix of bivariate tail-dependence coefficients of a max-stable vector with standard $1$-Fr\'{e}chet-margins?
\end{itemize}
\end{framed}
	This problem may at a first glance look similar to deciding whether a given matrix is a valid covariance matrix. Indeed, as a strengthening of Remark~\ref{rem:Bercomp}, it can be shown that there exists a bijection between TD matrices as in the above problem and a subset of the so-called Bernoulli-compatible random matrices, i.e.\ expected outer products $E(YY^t)$ of random (column) vectors $Y$ with Bernoulli margins, see \cite{embrechts:hofert:wang:2016} and \cite{fiebig:2017}. But while it is a simple task to check if a matrix is the covariance matrix of \emph{some} random vector, for example by finding the eigenvalues of this matrix, it can become more difficult to check whether a matrix is the covariance matrix or outer product of a restricted space of random variables. Practical and numerical aspects of deciding whether a given matrix is a TD matrix have been studied in \cite{krauseetal2018} and \cite{Shyamalkumar2020}, including a discussion on the computational complexity of the problem. Indeed, they point out that due to results by \cite{pitowsky1991}, checking whether a matrix is Bernoulli-compatible is an NP-complete problem. However, some subtlety arises as in order to check whether a $p \times p$-matrix $L$ is a so-called tail coefficient matrix, i.e.\ a TD matrix with 1's on the diagonal, it needs to be checked that $p^{-1}L$ is Bernoulli-compatible, see \cite{Shyamalkumar2020}. Thus, the problem narrows down to checking Bernoulli compatibility of the subclass of matrices with $1/p$ on their diagonal and this may have a different complexity than the general membership problem. Due to the similarity in the above mentioned problems, \cite{Shyamalkumar2020} conjecture that the TDR problem is NP-complete as well.
	
	We add to the discussion by using results about computational complexity of problems related to cut metrics and metric embeddings, see Section 4.4 in \cite{deza:1997} for a brief overview over some relevant results. To this end, let us first introduce a problem which is related to the TDR problem but easier to handle for the subsequent complexity analysis.
	
\begin{framed}
 \underline{Spectral distance realization (SDR) problem with unconstrained, identical margins} 
\begin{itemize}
    \item Input: A non-negative, symmetric $p \times p$-matrix $d = (d(i,j))_{p\times p}$.
    \item Output: An answer to the question: Does there exist a simple max-stable vector $X=(X_1, \ldots, X_p)$ and some $c>0$ such that $\|X_i\|_1=c, i \in [p],$ and 
    \begin{equation}\label{Eq:realizationd} d(i,j)= 2 (c-\lambda_X(i,j)),\;\;\; i,j \in [p], \end{equation}
    i.e.\ $d$ is the spectral distance of $X$?
\end{itemize}
\end{framed}
With the help of our previous results and the known computational complexity of $\ell_1$-embeddings it is simple 
to establish the computational complexity of the above problem.
\begin{theorem}\label{Th:SDR:is:NPcomplete}
    The SDR problem with unconstrained, identical margins is NP-complete.    
\end{theorem}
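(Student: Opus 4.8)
The plan is to identify the SDR decision problem with the problem of testing $\ell_1$-embeddability of a finite distance matrix, whose membership in the NP-complete class is recorded in Section~4.4 of \cite{deza:1997}. The bridge is Theorem~\ref{thm:L1-embed}: a non-negative symmetric matrix $d$ is the spectral distance of a simple max-stable vector with identical margins $\|X_i\|_1 = c$ for some $c>0$ if and only if $d$ is an $L^1$-embeddable semi-metric. The ``only if'' direction is exactly part~(i) of that theorem (realizability forces $d$ to be a semi-metric of the form \eqref{e:d-via-X}), while part~(ii), together with the remark following it that allows one to inflate the common scale to make $c>0$, supplies the ``if'' direction. Combining this with the equivalence of $L^1$- and $\ell_1$-embeddability from Proposition~\ref{p:d-via-cuts} and Remark~\ref{Rem:L1isl1}, the SDR answer is ``yes'' precisely when $d$ lies in the cut cone, i.e.\ is $\ell_1$-embeddable. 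Thus, as decision problems on the same class of inputs, SDR and $\ell_1$-embeddability coincide.

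First I would verify membership in NP. The natural positive certificate is a cut decomposition of $d$, namely a list of pairs $(J,\beta(J))$ with $\beta(J)\ge 0$ realizing \eqref{e:d-via-cuts}; equivalently, these are the coefficients of a Tawn-Molchanov model whose spectral distance equals $d$. Verification is cheap: one checks $d(i,j)=\sum_J \beta(J)\,|1_J(i)-1_J(j)|$ over all $\binom{p}{2}$ pairs, which is polynomial in the certificate size. The point requiring care is that the certificate be of polynomially bounded bit-length, since \eqref{e:d-via-cuts} ranges a priori over $2^{p-1}-1$ cuts. Here I would invoke Caratheodory's theorem (cf.\ Proposition~1.4.2 in \cite{Matousek13}) to restrict to at most $\binom{p}{2}$ nonzero coefficients, and the standard fact that a rational feasibility system $d=\sum_J \beta(J)\delta(J)$, $\beta\ge 0$, with $0/1$ constraint vectors admits a basic feasible solution whose entries have bit-size polynomial in that of $d$ (via the determinant bound on a $\binom{p}{2}\times\binom{p}{2}$ basis, independent of the exponential number of variables). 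Listing the $O(p^2)$ selected subsets $J$ (each $p$ bits) together with their rational weights then gives a certificate of polynomial size.

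Next I would establish NP-hardness by reduction from the $\ell_1$-embeddability (cut-cone membership) problem for rational distance matrices, which is NP-complete by the results surveyed in \cite{deza:1997}. Because the two decision problems agree on every input $d$ by the equivalence in the first paragraph, the reduction is simply the identity map: given an instance $d$ of the $\ell_1$-embeddability problem, output the same $d$ as an instance of SDR. This transformation is trivially polynomial and preserves the yes/no answer, so any problem in NP that reduces to $\ell_1$-embeddability also reduces to SDR. Hence SDR is NP-hard, and combined with membership in NP it is NP-complete.

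The hard part will not be the equivalence, which Theorem~\ref{thm:L1-embed} essentially hands us, but the bookkeeping in the NP-membership step. One must ensure the cited NP-completeness of $\ell_1$-embeddability is invoked for \emph{rational} inputs, and that the cut-decomposition certificate can be taken with polynomially bounded numerators and denominators, so that the SDR certificate inherits polynomial size. This is precisely the subtlety separating a rigorous membership-in-NP argument from an informal hardness claim, and it is where I would spend the most care.
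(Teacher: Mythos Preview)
Your proposal is correct and follows essentially the same route as the paper: establish via Theorem~\ref{thm:L1-embed} and Remark~\ref{Rem:L1isl1} that the SDR problem is literally the same decision problem as $\ell_1$-embeddability of a finite semi-metric, then invoke the known NP-completeness of the latter from \cite{deza:1997} (the paper cites \cite{avis:deza:1991} directly). Your separate Caratheodory-based certificate argument for NP membership is sound but unnecessary here, since the identity equivalence transfers both NP membership and NP-hardness wholesale; the paper simply appeals to the cited NP-completeness result without re-deriving either direction.
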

\begin{proof}
 Due to Theorem~\ref{thm:L1-embed} (i)-(ii), the spectral distance $d(i,j)=2( c-\lambda_X(i,j))$ of a simple max-stable random vector with $\|X_i\|_1=c, i \in [p],$ is $L^1$-embeddable and for each $L^1$-embeddable semi-metric $d$ there exists a simple max-stable vector $X$ with $\|X_i\|_1=c, i \in [p],$ for some $c>0$ such that $d$ is the spectral distance of $X$. Thus, the question is equivalent to checking that $d$ is $L^1$-embeddable and this is equivalent to checking that $d$ is $\ell_1$-embeddable, see Remark~\ref{Rem:L1isl1}. The latter problem is NP-complete by \cite{avis:deza:1991}, see also (P5) in \cite{deza:1997}. 
\end{proof}
\begin{remark}
In the SDR problem one could add more assumptions about $d$ in the first place under ``Input'', for example that the entries on the diagonal of $d$ are equal to 0 or that $d$ is a distance matrix. Alternatively, one could also just assume under ``Input'' that $d$ is a $p \times p$-matrix. Since a positive answer to the question would always ensure that $d$ is a distance matrix and all mentioned properties (non-negativity, symmetry, triangle inequality) could be checked in a number of steps which is a polynomial in $p$ these additional assumptions do not change the NP-completeness of the problem.
\end{remark}
Unfortunately, the constant $c$ in \eqref{Eq:realizationd} is not part of the input in the algorithm and thus cannot be fixed a priori. If we could for example set $c=1$ and thus ask if for a given $d$ a simple max-stable vector $X$ with standard $1$-Fr\'{e}chet-margins exists such that $d(i,j)=2(1-\lambda_X(i,j))$, then this is equivalent to checking that $\lambda_{i,j}:=1-d(i,j)/2$ is a TD matrix. But while such an arbitrary fixation of $c$ may change the nature of the problem, the following statement points out an a posteriori feasible range for $c$.

\begin{lemma}\label{lem:SDRrange} If the outcome of the SDR problem with unconstrained, identical margins is a positive answer to the question, then \eqref{Eq:realizationd} holds for a suitable chosen max-stable vector $X$ 
and every $c \ge (2^p-2)\max_{i,j \in [p]}d(i,j)$.
\end{lemma}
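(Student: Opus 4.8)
The plan is to leverage the equivalence (Theorem~\ref{thm:L1-embed} together with Proposition~\ref{p:d-via-cuts}) between a positive answer to the SDR problem and the existence of a cut decomposition of $d$, and then to bound the common marginal scale of one particular cut-based realization before inflating it to the prescribed level. The overall strategy is: build a cheap equal-margin realization whose common scale is at most $(2^p-2)\max_{i,j}d(i,j)$, and use the freedom to raise the common scale arbitrarily.

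First I would use that, by hypothesis, the SDR problem has a positive answer, so by Theorem~\ref{thm:L1-embed} the matrix $d$ is an $L^1$-embeddable semi-metric and by Proposition~\ref{p:d-via-cuts} it admits a cut decomposition
\[
d(i,j) = \sum_{\emptyset\neq J\subsetneq[p]} \beta(J)\,|1_J(i)-1_J(j)|,\qquad i,j\in[p],
\]
with non-negative coefficients $\beta(J)$. Here the full set $[p]$ may be discarded, since $|1_{[p]}(i)-1_{[p]}(j)|\equiv 0$, so that at most $2^p-2$ proper non-empty subsets carry positive weight. The key estimate is a uniform bound on the cut coefficients: I claim $\beta(J)\le D:=\max_{i,j\in[p]}d(i,j)$ for every $J$ above. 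Indeed, for any proper non-empty $J$ there is a separating pair $i\in J$, $j\notin J$, for which $\delta(J)(i,j)=|1_J(i)-1_J(j)|=1$; since all coefficients are non-negative, the single term $\beta(J)$ is dominated by the whole sum, which equals $d(i,j)\le D$. Summing over the at most $2^p-2$ relevant subsets yields $\sum_{J}\beta(J)\le (2^p-2)D$.

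It remains to turn this into a realization with \emph{equal} margins and an arbitrarily large common scale. I would apply the construction of Theorem~\ref{thm:L1-embed}(ii) to the explicit $\ell_1$-embedding $g_i(J):=\beta(J)\,1_J(i)$ furnished by Proposition~\ref{p:d-via-cuts}: this produces a simple max-stable vector with equal marginal scales realizing $d$, and the remark following the proof of Theorem~\ref{thm:L1-embed} shows that the common scale can be taken to be any value at least $\int g^*=\sum_J\beta(J)$, where $g^*(J)=\max_i|g_i(J)|=\beta(J)$. Combining this with the bound $\sum_J\beta(J)\le(2^p-2)D$, every $c\ge (2^p-2)D$ lies in the admissible range, so there is a simple max-stable $X$ with $\|X_i\|_1=c$ for all $i$ and $d(i,j)=2(c-\lambda_X(i,j))$, which is exactly \eqref{Eq:realizationd}.

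The main obstacle is precisely the equal-margin requirement: a generic cut realization of $d$ need not have identical marginal scales, so the content beyond the counting bound lies in producing equal margins at a controlled scale. One either invokes the doubling construction of Theorem~\ref{thm:L1-embed}(ii) as above, or, alternatively, symmetrizes the decomposition by replacing $\beta(J)$ with $(\beta(J)+\beta(J^c))/2$. The latter preserves $d$ because $\delta(J)=\delta(J^c)$, keeps each coefficient $\le D$, and forces $\sum_{J\ni i}\beta(J)$ to be independent of $i$ (pairing each $J\ni i$ with $J^c\not\ni i$), hence gives equal margins with common scale $\tfrac12\sum_J\beta(J)\le\tfrac12(2^p-2)D$; one then inflates $c$ by adding a weight on the full-set cut $[p]$, which leaves $d$ unchanged while raising every margin equally. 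Either route closes the argument, with the uniform coefficient bound $\beta(J)\le D$ and the count $2^p-2$ being the decisive quantitative ingredients.
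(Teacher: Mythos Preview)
Your proof is correct and follows essentially the same approach as the paper: obtain a cut decomposition of $d$, bound each coefficient by $\beta(J)\le\max_{i,j}d(i,j)$ via a separating pair, count at most $2^p-2$ proper non-empty subsets, and then inflate the common margin by adding mass to $\beta([p])$. Your treatment of the equal-margin step is in fact more explicit than the paper's (which simply invokes Proposition~\ref{p:d-via-cuts} and Lemma~\ref{lemma:d-linear} to assert that a TM-model with equal margins exists), and your alternative symmetrization $\beta(J)\mapsto(\beta(J)+\beta(J^c))/2$ is a clean variant not in the paper, but the overall strategy and quantitative ingredients are the same.
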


The proof is given in Appendix \ref{sec:proofs}.
From the previous lemma we see that the SDR problem with unconstrained, identical margins is equivalent to 
\begin{framed}
 \underline{Spectral distance realization (SDR) problem with $d$-constrained margins} 
\begin{itemize}
    \item Input: A non-negative, symmetric $p \times p$-matrix $d = (d(i,j))_{p\times p}.$
    \item Output: An answer to the question: Does there exist a simple max-stable vector $X=(X_1, \ldots, X_p)$ such that $\|X_i\|_1=(2^p-2)\max_{i,j \in [p]}d(i,j)
    , i \in [p],$ and 
$$ d(i,j)=2((2^p-2)\max_{i,j \in [p]}d(i,j)-\lambda_X(i,j)),\;\;\; i,j \in [p], $$
    i.e.\ $d$ is the spectral distance of $X$?
\end{itemize}
\end{framed}
Finally, by changing from $X$ to $\tilde{X}:=X/((2^p-2)\max_{i,j \in [p]}d(i,j))$ the spectral distance $d_{\tilde{X}}$ of $\tilde{X}$ and bivariate tail-dependence coefficients $\lambda_{\tilde{X}}(i,j)$ scale accordingly by Lemma~\ref{lemma:d-linear} and we see that the latter problem is actually equivalent to 
\begin{framed}
 \underline{Spectral distance realization (SDR) problem with constrained, standard margins} 
\begin{itemize}
    \item Input: A non-negative, symmetric $p \times p$-matrix $d= (d(i,j))_{p\times p}.$
    \item Output: An answer to the question: Does there exist a simple max-stable vector $X=(X_1, \ldots, X_p)$ such that $\|X_i\|_1=1, i \in [p],$ and 
\begin{equation}\label{e:lambdaij-via-dij}
 \frac{d(i,j)}{(2^p-2)\max_{i,j \in [p]}d(i,j)}=2(1-\lambda_X(i,j)),\;\;\; i,j \in [p],
 \end{equation}
    i.e.\ $\lambda(i,j):=1-d(i,j)/(2(2^p-2)\max_{i,j \in [p]}d(i,j))$ is the matrix of bivariate tail-dependence coefficients of a max-stable vector with standard $1$-Fr\'{e}chet-margins?
\end{itemize}
\end{framed}

From the last line in the above problem we can see that our SDR problem with constrained, standard margins can be solved if we have an algorithm to check that $\lambda$ of the given form is a TD matrix. But since we know by the stated equivalence of all three SDR problems in combination with Theorem~\ref{Th:SDR:is:NPcomplete} that all of them are NP-complete, we know that this algorithm has to be NP-complete as well. This leads to the following result.
\begin{theorem}
The TDR problem is NP-complete.
\end{theorem}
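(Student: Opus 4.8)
The plan is to establish the two defining properties of NP-completeness separately: that the TDR problem is NP-hard, and that it belongs to NP.

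For NP-hardness, I would reduce the SDR problem with constrained, standard margins to the TDR problem. This SDR variant is NP-complete, since by Lemma \ref{lem:SDRrange} and the rescaling of Lemma \ref{lemma:d-linear} it is the very same decision problem as the SDR problem with unconstrained, identical margins, which is NP-complete by Theorem \ref{Th:SDR:is:NPcomplete}. The reduction is exactly the transformation already written down in \eqref{e:lambdaij-via-dij}: given a non-negative symmetric input $d$, first verify in $O(p^2)$ time that $d(i,i)=0$ (otherwise $d$ is not a semi-metric, the SDR answer is ``no'', and we output a fixed non-TD matrix) and that $\max d>0$ (the constant-zero case being mapped to the all-ones matrix, which is trivially a TD matrix); then output $L$ with $L_{i,i}:=1$ and $L_{i,j} := 1 - d(i,j)/\bigl(2(2^p-2)\max_{k,\ell}d(k,\ell)\bigr)$ for $i\neq j$. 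By the stated equivalence of the three SDR formulations, $d$ is a ``yes'' instance of SDR if and only if this $L$ is the bivariate tail-dependence matrix of a simple max-stable vector with standard margins, i.e.\ a ``yes'' instance of TDR. The point requiring care is that this map runs in polynomial time: although $2^p-2$ grows exponentially, it has only $O(p)$ bits, and each rational entry $L_{i,j}$ has numerator and denominator of bit-length polynomial in the input size $|{\cal I}|$, so $L$ is produced in time polynomial in $|{\cal I}|$.

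For membership in NP, I would exhibit a short, polynomially verifiable certificate. By the generalized Tawn--Molchanov characterization of Theorem \ref{t:TM-model} together with Proposition \ref{p:d-via-cuts}, a matrix $L$ with $L_{i,i}=1$ is a tail-dependence matrix if and only if the linear system in the unknowns $\{\beta(J)\}_{\emptyset\neq J\subseteq[p]}$ given by $\sum_{J\supseteq\{i,j\}}\beta(J)=L_{i,j}$ for $i<j$, together with $\sum_{J\ni i}\beta(J)=1$ for $i\in[p]$ and $\beta(J)\ge 0$ for all $J$, is feasible; indeed, via \eqref{e:theta-lambda-via-beta} any feasible $\beta$ is the weight vector of a TM-model \eqref{e:TM-model} realizing $L$ with standard margins. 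Although this system has $2^p-1$ variables, it has only $\binom{p}{2}+p$ equality constraints, so by Carathéodory's theorem (the same bound invoked after Proposition \ref{p:d-via-cuts}, cf.\ \cite{Matousek13}) feasibility is equivalent to the existence of a feasible solution supported on at most $\binom{p}{2}+p$ sets $J$, whose coordinates, being a basic solution of a rational linear system, have bit-length polynomial in $|{\cal I}|$. The certificate is this sparse list of sets $J$ with their rational weights $\beta(J)$; verifying non-negativity and the two families of linear equations takes time polynomial in $|{\cal I}|$. Hence TDR is in NP.

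The main obstacle is bookkeeping encoding sizes rather than any conceptual difficulty: one must confirm that neither the exponential constant $2^p-2$ in the reduction nor the $2^p-1$ potential cuts $J$ in the certificate ever forces a super-polynomial object, which is precisely where the $O(p)$-bit representation of $2^p-2$ and the Carathéodory sparsification are essential, and one must check soundness of the reduction on the degenerate inputs (constant $d$, or $d$ with non-zero diagonal). Combining the NP-hardness reduction with membership in NP yields that the TDR problem is NP-complete, confirming the conjecture of \cite{Shyamalkumar2020}.
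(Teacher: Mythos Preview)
Your proposal is correct and follows essentially the same route as the paper: the NP-hardness reduction via \eqref{e:lambdaij-via-dij} from the (equivalently NP-complete) SDR variants is exactly the paper's argument, and your membership-in-NP certificate via a sparse Tawn--Molchanov weight vector is the Carath\'eodory argument the paper invokes by citing \cite{Shyamalkumar2020}. If anything, you spell out more carefully than the paper does the bit-length bookkeeping (the $O(p)$-bit encoding of $2^p-2$, the basic-feasible-solution bound on the certificate size) and the degenerate inputs.
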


\begin{proof}

We need to show that the TDR problem is both in NP and NP-hard. That the TDR problem is in NP has been shown in \citep[p.\ 255]{Shyamalkumar2020}, with the help of Caratheodory's theorem. We start with the first statement and follow the typical way to prove this by reducing a known NP-complete problem to TDR. Indeed, any input matrix $d(i,j)$ to any of the three equivalent, and by Theorem~\ref{Th:SDR:is:NPcomplete} NP-complete, SDR problems can be transformed in polynomial time to the matrix $\lambda(i,j):=1-d(i,j)/(2(2^p-2)\max_{i,j \in [p]}d(i,j))$. By the statement of the third SDR problem, the question with input $d$ can be answered by using $\lambda$ as an input to the TDR problem. Thus, an NP-complete problem reduces in polynomial time to the TDR problem and the TDR problem is NP-hard, thus NP-complete.

\end{proof}

\bibliographystyle{agsm}
\bibliography{refs.bib}

\appendix
\section{ Proofs and auxiliary results} \label{sec:proofs}

\subsection{Proofs for Section \ref{sec:notation} }
\begin{proof}[Proof of Proposition \ref{p:h(Y)}] Here, for brevity, we shall write $\{h\in A\}$
for the pre-image set $h^{-1}(A)=\{x\in \R^p\, :\, h(x)\in A\}$. By the continuity of $h$, it follows that for all $a\ge 0$, the set
$\{h\ge a\}$ is closed and $\{h>a\}$ is open. Hence $\{h=a\} = \{h\ge a\}\setminus \{h>a\} \supset \partial \{h>a\}$, 
where $\partial A = A^{\rm cl}\setminus A^{\rm int}$
denotes the boundary of the set $A$. Since $h(0) = 0$ (by continuity and homogeneity), we have that
for all $a>0$, the closed set $\{h\ge a\}$ does not contain $0$ and hence it is bounded away from $0$.  Thus, $\mu(\{ h\ge a\}) <\infty$.   Since
$\{h=t\} = t \cdot\{h=1\},\ t>0$, the scaling property \eqref{e:mu-scaling} 
of $\mu$ implies that $\mu(\{h=t\}) = t^{-\alpha} \mu(\{h=1\})$ and if 
$\mu(\{h=t\})>0$ for some (any) $t>0$, then $\mu(\{h=t\})>0$, for all $t>0$.  On the other hand, we have 
that $\{h\ge a\} = \cup_{t\ge a} \{h=t\}$, where the latter union involves an uncountable
collection of disjoint sets. Thus, $\mu(\{h=t\})$ must vanish
for all $t>0$.  This means that $\mu(\partial\{\mu> a\}) = 0$, or that $\{h>a\}$ are $\mu$-continuity sets for all $a>0$.
This allows us to apply the definition of regular variation \eqref{e:d:RV} and obtain
$$
n\P[h(X)> a_n] = n\P[X\in a_n\cdot \{h>1\} ] \to \mu(\{h>1\}),\ \mbox{ as }n\to\infty.
$$
Now, \eqref{e:mu-spectral} entails
\begin{align*}
\mu(\{h>1\}) &= \int_S \int_0^\infty 1_{\{ h(ru)>1\}} \alpha r^{-\alpha-1} dr \sigma(du)\\
& = \int_S \int_0^\infty 1_{\{ h(u)> 1/r\}} \alpha r^{-\alpha-1} dr \sigma(du)\\
& =  \int_S \int_0^\infty 1_{\{ h^\alpha(u) > x \}} dx  \sigma(du)=\int_S h^\alpha(u) \sigma(du),
\end{align*}
where in the last two displays we used the homogeneity of $h$ and the change of variables $x:=r^{-\alpha}$.

This completes the proof of the first relation in \eqref{e:p:h(Y)}.  The second relation therein follows from the
observation that $\sigma(\cdot)/\sigma(S)$ is a probability distribution.
\end{proof}

\begin{proof}[Proof of Proposition \ref{p:X-max-stable-is-RV}]
Since $X$ has non-negative components, to establish its regular variation, it is enough consider
measures supported only on $\R_+^p$ and show that 
$$
\mu_n (\cdot):= n \P[ X \in n\cdot]\ToM0 \mu,\ \mbox{ as }n\to\infty,
$$
where $\mu[0,x]^c = -\log(\P[X\le x])$ with $[0,x]^c:= \R_+^p \setminus [0,x]$.

Fix an $x \in [0,\infty)^p\setminus\{0\}$. The definition of simple max-stability \eqref{e:max-stability-def} entails
$\P[X \le  n x]^n = \P[X\le x]$.  Thus, for all $n \in \mathbb{N}$,
\begin{equation}\label{e:p:X-max-stable-is-RV-1}
\P[ X\le nx ]^n = \Big(1 - \frac{n \P[ X\in n\cdot [0,x]^c]}{n} \Big)^n = \Big( 1-\frac{\mu_n[0,x]^c}{n}\Big)^n 
= \P[X\le x].
\end{equation}
Observe that $\P[X\le x]$ is \textit{positive}.  Indeed, it is easy to see that since $X$ is non-degenerate
$\xi:=\max_{1\leq i\leq p} x_i^{-1} X_i$ is $1$-Fr\'echet and thus $\P[\xi\leq 1 ] = \P[X\le x] >0$, for all $x\in \R_+^p$.
This means that for the boundedly finite measures  $\mu_n(\cdot)= n \P[ X\in n\cdot]$,  we have 
\begin{equation}\label{e:p:X-max-stable-is-RV-2}
 \lim_{n\to\infty} \mu_n([0,x]^c) = -\log \P[X\le x] .
\end{equation}
The latter relation shows that the sequence of measures $\{\mu_n\}$ is relatively compact in $M_0(\R^p)$, equipped with 
the $M_0$-convergence topology.  Indeed, by \citep[Theorem 2.7]{hult:lindskog:2006}, it suffices to show that for all $\varepsilon>0$ and $\eta>0$, there exists an $M=M(\varepsilon,\eta)>0$, such that
$$
\sup_{n} \mu_n( B(0,\varepsilon)^c) <\infty\ \ \mbox{ and }\ \ \sup_{n} \mu_n\big(\R_+^p\setminus [0,M]^p \big) <\eta.
$$
The first condition follows from \eqref{e:p:X-max-stable-is-RV-2} and since $\P[X \leq x]>0$. 
The second condition follows from the fact that
$-\log(\P[X\le x]) \downarrow 0$, as $x\uparrow \infty$, which is true since $X$ has a valid probability distribution.

The relative compactness of the measures $\{\mu_n\}$ entails that $\mu_{n'}\ToM0 \mu$ for some $\mu\in M_0$ and 
a sub-sequence $n'\to \infty$.  However,  by \eqref{e:p:X-max-stable-is-RV-2} and Proposition \ref{p:de-Haan} we have 
\begin{align}\mu[0,x]^c &= -\log \P[X\le x] \\
&= \int_{E} \max_{1\leq i\leq p} \frac{f_i(u)}{x_i} \nu(du) \\
\label{mu:nu:f}&= \int_{E} \int_0^\infty 1_{[0,x]^c}(r \vec{f}(u))r^{-2}dr \nu(du)
\end{align}
for all 
$x\in [0,\infty)^p\setminus\{0\},$ and the limit measure is uniquely determined by its values on all the complements of 
rectangles containing the origin. Furthermore, we see from \eqref{mu:nu:f} that for non-degenerate $X$, the limit measure $\mu$ is non-degenerate as well. This proves that $X\in RV(\{n\},\mu)$ where $\P[X\le x] = \exp\{-\mu[0,x]^c\}$. 

Having established regular variation, the first equality in Relation \eqref{e:p:max-stable-RV} follows from the
$\mu$-continuity of the set $\{h>1\}$ as argued in the proof of Proposition \ref{p:h(Y)}.  The rest of Relation \eqref{e:p:max-stable-RV} follows from \eqref{mu:nu:f}.

Finally, the representation in \eqref{e:p:max-stable-RV-sigma} follows from the fact that $\sigma$ is determined by 
$\int_S g(u) \sigma(du)$, for all continuous functions $g: S\to \R_+$.  Indeed, for every such $g$, 
the function $h(x):= g(x/\|x\|) \|x\| 1_{\{x\not = 0\}}$ is continuous, non-negative and $1$-homogeneous and hence by 
\eqref{e:p:max-stable-RV}
$$
\int_S g(u) \sigma(du) = \int_S g\Big( \frac{\vec f(z)}{\|\vec f(z)\|}  \Big) \|\vec f(z)\| \nu(dz).
$$
This, since $g$ is arbitrary, proves \eqref{e:p:max-stable-RV-sigma}.
\end{proof}

\begin{proof}[Proof of Corollary \ref{c:theta-lambda-mu}]
    Relation \eqref{e:theta-lambda-mu} follows by applying \eqref{e:p:max-stable-RV} to $h$ replaced by the continuous and homogeneous functions $h_{\max,L} (x):= (\max_{i\in L}x_i)_+$ and  $h_{\min,L} (x):= (\min_{i\in L}x_i)_+$, respectively.  Indeed, observe that
    $$
    n \P\Big[\min_{i\in L} X_i>n\Big] = n\P\Big[h_{\min,L}(X)>n\Big]\to \mu(\{h_{\min,L}>1\}) = \lambda(L),
    \ \ \mbox{ as }n\to\infty.
    $$
    On the other hand, $\{x\, :\, h_{\min,L}(x)>1\} = \bigcap_{i\in L} A_i$. 
 
    The formula for $\lambda(L)$ in 
    Relation \eqref{e:theta-lambda-f} follows from the above and Equation \eqref{e:p:max-stable-RV} since
     $
     h_{\min,L}(\vec{f})=\min_{i\in L} f_i(x).
     $
     The derivations of the formulae for $\theta(K)$ are similar.
\end{proof}

We conclude this section with the auxiliary result, that the spectral distance and the tail-dependence coefficients are linear under max-linear combinations, in the sense of the following lemma.
\begin{lemma}\label{lemma:d-linear}
Let $X^{(t)}=(X^{(t)}_i)_{1\leq i\leq p}$, $1\leq t\leq n$, be independent simple max-stable vectors with tail measures $\mu^{(t)}$, $1\leq t\leq n $, and let $\gamma_t\geq 0$, $1\leq t\leq n$, be some non-negative weights. Define $\bar{X}=\bigvee_{t=1}^{n}\gamma_t X^{(t)}$.
Then, $$d_{\bar{X}}(i,j)=\sum_{t=1}^{n}\gamma_t d_{X^{(t)}}(i,j) \quad \text{ for all } \quad i,j\in[p]$$ and
$$\lambda_{\bar{X}}(L)=\sum_{t=1}^{n}\gamma_t\lambda_{X^{(t)}}(L) \quad \text{ for all } \quad L\subset [p].$$ 
\end{lemma}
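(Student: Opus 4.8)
The plan is to establish the identity for $\lambda_{\bar X}$ first and then read off the spectral-distance identity as an algebraic consequence. Indeed, by \eqref{eq:d-lambda} one has $d_X(i,j) = \lambda_X(i) + \lambda_X(j) - 2\lambda_X(i,j)$ for every simple max-stable $X$, so once $\lambda_{\bar X}(L) = \sum_{t=1}^n \gamma_t \lambda_{X^{(t)}}(L)$ is known for all $L$, specializing to $L = \{i\}, \{j\}, \{i,j\}$ and forming the corresponding linear combination yields $d_{\bar X}(i,j) = \sum_{t=1}^n \gamma_t d_{X^{(t)}}(i,j)$ at once.

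To prove the $\lambda$-identity I would work with the de Haan spectral representation of Proposition \ref{p:de-Haan}. I realize each $X^{(t)}$ as $X^{(t)}_i \stackrel{d}{=} I_t(f^{(t)}_i)$ via \eqref{e:de-Haan}, using mutually independent driving Poisson processes (legitimate since the $X^{(t)}$ are independent), with $f^{(t)}_i \in L_+^1(E_t,\nu_t)$. I then concatenate these onto the disjoint union $(E,\nu) := \bigsqcup_{t=1}^n (E_t,\nu_t)$ and set $\bar f_i := \gamma_t f^{(t)}_i$ on the $t$-th block $E_t$. Each $\bar f_i$ is a non-negative element of $L^1(E,\nu)$, and by the max-linearity of the extremal integral together with the independence of the blocks, $I(\bar f_i) = \bigvee_{t=1}^n \gamma_t I_t(f^{(t)}_i) \stackrel{d}{=} \bigvee_{t=1}^n \gamma_t X^{(t)}_i = \bar X_i$, jointly over $i\in[p]$. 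Hence $(\bar f_i)_{i\in[p]}$ are spectral functions of $\bar X$, so $\bar X$ is simple max-stable and Corollary \ref{c:theta-lambda-mu} applies. Using the formula $\lambda_X(L) = \int_E \min_{i\in L} f_i \, d\nu$ from \eqref{e:theta-lambda-f} and the fact that $\min_{i\in L}(\gamma_t f^{(t)}_i) = \gamma_t \min_{i\in L} f^{(t)}_i$ for $\gamma_t \ge 0$, I would compute
\[
\lambda_{\bar X}(L) = \int_E \min_{i\in L}\bar f_i \, d\nu = \sum_{t=1}^n \int_{E_t} \gamma_t \min_{i\in L} f^{(t)}_i \, d\nu_t = \sum_{t=1}^n \gamma_t \lambda_{X^{(t)}}(L),
\]
which is the desired identity.

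The only genuinely substantive point — and the one I would take care over — is the legitimacy of the concatenated representation, namely that the spectral functions of $\bar X$ really are the blockwise weighted functions $\bar f_i$. This is exactly where the independence hypothesis enters: it lets one couple the $X^{(t)}$ to independent Poisson driving processes, so that the overall extremal integral over the disjoint union factors as the max over blocks and reproduces the correct \emph{joint} law of $\bar X$. Everything after that is linearity of the integral over $E=\bigsqcup_t E_t$ and nonnegativity of the $\gamma_t$ (which also covers the degenerate cases $\gamma_t=0$, consistent with the convention $\lambda = \theta = 0$ for the zero vector). As an alternative route avoiding spectral functions, one may observe that $\theta_{\bar X}(K) = \|\bigvee_{i\in K}\bar X_i\|_1 = \sum_{t=1}^n \gamma_t \|\bigvee_{i\in K} X^{(t)}_i\|_1 = \sum_{t=1}^n \gamma_t \theta_{X^{(t)}}(K)$, since the maximum of independent $1$-Fr\'echet variables adds scale coefficients, and then transfer linearity from $\theta$ to $\lambda$ through the inclusion--exclusion relations \eqref{e:theta-and-lambda}.
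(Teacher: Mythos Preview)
Your proof is correct. It differs from the paper's argument in its choice of representation: the paper computes the CDF of $\bar X$ directly from independence and the homogeneity of the $\mu^{(t)}$, obtaining $\mu_{\bar X}=\sum_{t}\gamma_t\mu^{(t)}$, and then reads off $\lambda_{\bar X}(L)$ from the tail-measure formula \eqref{e:theta-lambda-mu}; the spectral-distance identity is then derived separately from the additivity of the Fr\'echet scale coefficients. You instead build explicit spectral functions for $\bar X$ by concatenating the blocks $(E_t,\nu_t)$ and apply the spectral-function formula \eqref{e:theta-lambda-f}, deducing the $d$-identity afterwards from \eqref{eq:d-lambda}. Both routes use the two halves of Corollary \ref{c:theta-lambda-mu} and are equally short; your approach has the mild advantage of exhibiting a concrete spectral representation of $\bar X$, while the paper's has the advantage of never leaving the level of tail measures. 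Your alternative via $\theta$ and the inclusion--exclusion relations \eqref{e:theta-and-lambda} is also valid and is essentially the scale-coefficient argument the paper uses for $d_{\bar X}$.
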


\begin{proof}[Proof of Lemma \ref{lemma:d-linear}]
    By the independence of $X^{(t)}$, $1\leq t\leq n$, and Proposition \ref{p:X-max-stable-is-RV} it applies
\begin{align}
    &\P\big[\bar{X}\leq x \big]
    = \prod_{t=1}^{n}\P\Big[X^{(t)}\leq \frac{1}{\gamma_t}x\Big]
    = \prod_{t=1}^{n}\exp\Big\{ - \mu^{(t)}\big[0,\frac{1}{\gamma_t}x\big]^c  \Big\}
    =\exp\Big\{ - \sum_{t=1}^{n} \gamma_t \mu^{(t)}[0,x]^c \Big\}
\end{align} 
for all $x \in \mathbb{R}_+^p \setminus\{0\}$, where in the last step the homogeneity of $\mu^{(t)}$ was applied.
Thus, $\bar{X}$ has the tail measure $\mu_{\bar{X}}=\sum_{t=1}^{n}\gamma_t \mu^{(t)}$, i.e. the tail measure of the max-linear combination $\bar{X}$ is the corresponding linear combination of the tail measures of the components. 
In particular, $\bar{X}$ has $1$-Fr\'echet marginals with scale coefficient $\|\bigvee_{t=1}^{n}\gamma_t X^{(t)}_i\|_1=\sum_{t=1}^{n}\gamma_t\|X^{(t)}_i\|_1$. 

Hence, by the definition of the spectral distance $d_{\bar{X}}$ in Definition \ref{def:specdist} we obtain 
\begin{align}\label{eq:d-decomp-gen}
    d_{\bar{X}}(i,j)&=2 \Big\|\bigvee_{t=1}^{n}\gamma_tX^{(t)}_i\vee \bigvee_{t=1}^{n}\gamma_tX^{(t)}_j\Big \|_1 - \Big\|\bigvee_{t=1}^{n}\gamma_tX^{(t)}_i\Big\|_1-\Big\|\bigvee_{t=1}^{n}\gamma_tX^{(t)}_j\Big\|_1\\
    &= 2\sum_{t=1}^{n}\gamma_t\|X^{(t)}_i\vee X^{(t)}_j\|_1 - \sum_{t=1}^{n}\gamma_t\|X^{(t)}_i\|_1 -\sum_{t=1}^{n}\gamma_t\| X^{(t)}_j\|_1 =  \sum_{t=1}^{n}\gamma_t d_{X^{(t)}}(i,j).
\end{align}
By the linear representation of $\mu_{\bar{X}}$ and \eqref{e:theta-lambda-mu} it follows for the tail-dependence coefficients
\begin{align}\label{eq:lambda-decomp-gen}
    \lambda_{\bar{X}}(L)=\mu_{\bar{X}} \Big(\bigcap_{i\in L} A_i \Big) = \sum_{t=1}^{n}\gamma_t\mu^{(t)} \Big(\bigcap_{i\in L} A_i \Big) = \sum_{t=1}^{n}\gamma_t \lambda_{X^{(t)}}(L)
\end{align}
\end{proof}

\subsection{Proofs for Section \ref{sec:TM}}

\begin{lemma}\label{l:formulae} For $\lambda$ and $\theta$ in 
\eqref{e:theta-lambda-via-beta} and $\beta$ in \eqref{e:beta-via-mu}, we have the inversion 
formulae \eqref{e:beta-via-theta}, \eqref{e:beta-via-lambda} as well as \eqref{e:theta-and-lambda}.  Namely, the following formulae hold:
\begin{align*}
&\beta(J) = \sum_{K\, :\, \emptyset \not=K,\ J^c\subset K} (-1)^{|J\cap K|+1} \theta(K),  
 &\beta(J) = \sum_{L\, :\, J \subset L \subset [p]} (-1)^{|L\setminus J|} \lambda(L) \\
&\theta(K) = \sum_{L\, :\, \emptyset\not=L\subset K}(-1)^{|L|-1}\lambda(L),
 &\lambda(L) = \sum_{K\, :\, \emptyset \not=K\subset L} (-1)^{|K|-1} \theta(K). 
\end{align*}
\end{lemma}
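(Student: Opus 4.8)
The plan is to derive all four identities directly from the two defining relations in \eqref{e:theta-lambda-via-beta}, namely $\theta(K) = \sum_{J:\, J\cap K\neq\emptyset}\beta(J)$ and $\lambda(L) = \sum_{J:\, L\subset J}\beta(J)$, without invoking the general M\"obius inversion machinery. The single combinatorial fact I would use throughout is the alternating-sum identity on the Boolean lattice,
\begin{equation}\label{e:alt-sum}
\sum_{M\subset S}(-1)^{|M|} = 1_{\{S=\emptyset\}},\qquad S\subset[p],
\end{equation}
together with its immediate consequence $\sum_{\emptyset\neq M\subset S}(-1)^{|M|-1} = 1_{\{S\neq\emptyset\}}$. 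For each claimed formula the recipe is the same: substitute the relevant defining relation for $\theta$ or $\lambda$, interchange the two finite sums, and collapse the inner sum via \eqref{e:alt-sum}, which in every case reduces to an indicator selecting exactly the surviving term.

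First I would treat \eqref{e:beta-via-lambda}. Substituting $\lambda(L)=\sum_{J':\,L\subset J'}\beta(J')$ into $\sum_{L:\,J\subset L}(-1)^{|L\setminus J|}\lambda(L)$ and swapping the order of summation leaves $\sum_{J':\,J\subset J'}\beta(J')\sum_{L:\,J\subset L\subset J'}(-1)^{|L\setminus J|}$, whose inner sum is $\sum_{M\subset J'\setminus J}(-1)^{|M|}=1_{\{J'=J\}}$, leaving $\beta(J)$. The inclusion--exclusion pair \eqref{e:theta-and-lambda} is handled the same way. For $\theta(K)=\sum_{\emptyset\neq L\subset K}(-1)^{|L|-1}\lambda(L)$, substituting the defining relation for $\lambda$ produces, as the coefficient of $\beta(J)$, the sum $\sum_{\emptyset\neq L\subset K\cap J}(-1)^{|L|-1}=1_{\{K\cap J\neq\emptyset\}}$, which reproduces $\theta(K)$. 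For $\lambda(L)=\sum_{\emptyset\neq K\subset L}(-1)^{|K|-1}\theta(K)$, substituting the defining relation for $\theta$ yields, for each $J$, the coefficient $\sum_{\emptyset\neq K\subset L,\ K\cap J\neq\emptyset}(-1)^{|K|-1}$, which splits via \eqref{e:alt-sum} as $1-1_{\{L\setminus J\neq\emptyset\}}=1_{\{L\subset J\}}$, reproducing $\lambda(L)$. These last two are of course mutually inverse, so establishing one would force the other, but the direct substitution is equally short for both.

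The main obstacle is \eqref{e:beta-via-theta}, which needs a brief case split. The key reparametrization is to write each admissible index set as $K=J^c\cup M$ with $M\subset J$, which is exactly the range $K\supset J^c$; then $|J\cap K|=|M|$, so the sign $(-1)^{|J\cap K|+1}$ becomes $(-1)^{|M|+1}$. Substituting $\theta(K)=\sum_{J':\,J'\cap K\neq\emptyset}\beta(J')$ turns the right-hand side into $\sum_{\emptyset\neq J'}\beta(J')\,S(J,J')$, where $S(J,J')=\sum_{M}(-1)^{|M|+1}$ ranges over $M\subset J$ subject to $J^c\cup M\neq\emptyset$ and $(J^c\cup M)\cap J'\neq\emptyset$. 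I would then split on whether $J'\subset J$. If $J'\not\subset J$, then $J'\cap J^c\neq\emptyset$, the second constraint is automatic, and $S(J,J')=-\sum_{M\subset J}(-1)^{|M|}=0$ by \eqref{e:alt-sum} (using $J\neq\emptyset$); if $J'\subset J$, the constraint becomes $M\cap J'\neq\emptyset$, which also forces $M\neq\emptyset$ and hence $K\neq\emptyset$, and a second application of \eqref{e:alt-sum} gives $S(J,J')=1_{\{J'=J\}}$. In both branches $S(J,J')=1_{\{J'=J\}}$, so the right-hand side collapses to $\beta(J)$. The only points requiring care are the nonemptiness constraint $K\neq\emptyset$ in the edge case $J=[p]$ (where $J^c=\emptyset$), which is automatically covered because $M\cap J'\neq\emptyset$ forces $M\neq\emptyset$, and keeping the exponent $(-1)^{|J\cap K|+1}$ aligned with $(-1)^{|M|+1}$; both are dispatched by the reparametrization $K=J^c\cup M$.
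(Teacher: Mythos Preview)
Your argument is correct. All four identities follow cleanly from the defining relations \eqref{e:theta-lambda-via-beta} together with the alternating-sum identity, and your case split for \eqref{e:beta-via-theta} via the reparametrization $K=J^c\cup M$ is handled carefully, including the edge case $J=[p]$.

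The route, however, differs from the paper's. The paper works with the measure-theoretic representations $\beta(J)=\int \prod_{i\in J}I_i\prod_{j\in J^c}(1-I_j)\,d\mu$, $\lambda(L)=\int\prod_{i\in L}I_i\,d\mu$, and $\theta(K)=\int(1-\prod_{i\in K}(1-I_i))\,d\mu$, where $I_i=1_{A_i}$, and obtains each formula by expanding a product of indicators under the integral. In particular, for \eqref{e:beta-via-theta} the paper does \emph{not} substitute directly: it first establishes \eqref{e:beta-via-lambda} and the $\lambda$-in-terms-of-$\theta$ half of \eqref{e:theta-and-lambda}, then composes these two identities and collapses the resulting double sum (over $L$ with $J\subset L$ and $K\subset L$) using the same alternating-sum trick. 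Your approach is purely combinatorial---it never touches the tail measure $\mu$ or the sets $A_i$---and treats \eqref{e:beta-via-theta} head-on with a single substitution and a dichotomy on $J'\subset J$. What you gain is uniformity: one mechanism (substitute, swap, collapse) works for all four formulas. What the paper's indicator-function expansion buys is a transparent probabilistic interpretation of each step, and for \eqref{e:beta-via-theta} it avoids the case split at the cost of a slightly more involved chain of compositions.
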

\begin{proof} For simplicity, introduce the indicator functions 
$I_i:= 1_{A_i}$, where $A_i = \{x\in \R^p\, :\, x_i>1\}$.  In view of 
\eqref{e:beta-via-mu} and \eqref{e:theta-lambda-mu}, we have
$$
\beta(J) = \int\Big( \prod_{i\in J} I_i \times \prod_{j\in J^c} (1-I_j) \Big) d\mu
$$
as well as
\begin{equation}\label{e:lambda-via-I}
\lambda(L) = \int \Big( \prod_{i\in L} I_i \Big) d\mu.
\end{equation}
This immediately entails
\begin{align*}
\beta(J) &= \int \Big( \sum_{J\subset L \subset [p]} (-1)^{|L\setminus J|} 
\prod_{i\in L} I_i  \Big) d\mu =\sum_{L\, :\, J\subset L \subset [p]} (-1)^{|L\setminus J|} \lambda(L),
\end{align*}
which proves \eqref{e:beta-via-lambda}.

The inclusion-exclusion formula for $\theta$ in terms of the $\lambda$'s is immediate from \eqref{e:lambda-via-I} and the observation that 
$$
\theta(K) = \int \Big(1 - \prod_{i\in K} (1-I_i) \Big) d\mu 
= \sum_{L\, :\,\emptyset \neq  L\subset K} (-1)^{|L|-1} \int \Big(\prod_{i\in L} I_i \Big) d\mu = \sum_{L\, :\, \emptyset \neq L\subset K} (-1)^{|L|-1} \lambda(L).
$$
Now, using \eqref{e:lambda-via-I} we obtain
\begin{align}\label{e:lambda-via-indicators-towards-theta}
\lambda(L) &= \int \Big(\prod_{i\in L} (1-(1-I_i)) \Big) d\mu\nonumber \\
& = \int \Big(1+\sum_{K\, :\, \emptyset \not = K\subset L} (-1)^{|K|} \prod_{i\in K} (1-I_i) \Big) d\mu.
\end{align}
Observe that by Newton's binomial formula:
$$
0= (1+(-1))^{|L|} = \sum_{K\subset L} (-1)^{|K|} = 1 + \sum_{K\, :\, \emptyset \not=K\subset L} (-1)^{|K|},
$$
and hence
$$
1 = \sum_{K\, :\, \emptyset \not=K\subset L} (-1)^{|K|-1}.
$$
Using the latter expression for the constant $1$ in the right-hand side of \eqref{e:lambda-via-indicators-towards-theta}, we obtain
$$
\lambda(L) = \sum_{K\, :\, \emptyset \not=K\subset L} (-1)^{|K|-1} \int \Big( 1- \prod_{i\in K}(1-I_i) 
\Big)d\mu = \sum_{K\, :\, \emptyset \not=K\subset L} (-1)^{|K|-1} \theta(K),
$$
completing the proof of the inclusion-exclusion formula for the $\lambda$'s via the $\theta$'s in
\eqref{e:theta-and-lambda}.

To complete the proof we need to establish the expression of $\beta(J)$'s via the $\theta(K)$'s in \eqref{e:beta-via-theta}.
We do so by passing through the $\lambda(L)$'s first.  Namely, by the established 
\eqref{e:beta-via-lambda} and \eqref{e:theta-and-lambda}, we have
\begin{align*}
    \beta(J) &= \sum_{L\, :\, J\subset L \subset [p]} (-1)^{|L\setminus J|} \lambda(L)\\
    & = \sum_{L\, :\, J\subset L \subset [p]} (-1)^{|L\setminus J|} \Big( \sum_{K\, :\, \emptyset \not=K\subset L} (-1)^{|K|-1} \theta(K) \Big)\\
    & = \sum_{K\, :\, \emptyset \not=K \subset [p]} \theta(K) (-1)^{|J|+|K|-1 - |J\cup K| }
    \times \Big(
    \sum_{L\, :\, (K\cup J) \subset L}  (-1)^{|L| - |J\cup K| } \Big) \\
    & =: \sum_{K\, :\, \emptyset \not=K \subset [p]} \theta(K) (-1)^{|J\cap K|+1} \times C(K,J).
\end{align*}
Observe that 
$C(K,J):= \sum_{L\, :\, (J\cup K)\subset L} (-1)^{|L|-|J\cup K|} =0$ if $(J\cup K)^c \not = \emptyset.$  Indeed, the latter sum is simply $(1+(-1))^{| [p] \setminus (K\cup L) |} = 0$.  On the other hand, if 
$J\cup K = [p]$, we trivially have $C(K,L)=1$.  This, since $J\cup K = [p]$ is equivalent to $J^c\subset K$, immediately implies 
$$
\beta(J) = \sum_{K\, :\, \emptyset \not=K,\ J^c\subset K} \theta(K) (-1)^{|J\cap K|+1}.
$$
This proves \eqref{e:beta-via-theta}.
\end{proof}

\begin{proof}[Proof of Theorem \ref{t:TM-model}] For this proof it is convenient to let
$\|u\| := \max_{i\in [p]} |u_i|$ be the sup-norm.

(`if') Suppose that all $\beta(J)$'s in \eqref{e:beta-via-theta} (or in \eqref{e:beta-via-lambda})
are non-negative and define $X^*$ as in \eqref{e:TM-model}. Clearly, $X^*$ is max-stable and we shall determine its
spectral measure $\sigma^*$ in the sup-norm.  Observe that for all $x\in (0,\infty)^p$ we have
$\beta(J) Z_J 1_J\le x$, if and only if $Z_J \le \min_{i\in J} x_i /\beta(J)$ and since the $Z_J$'s are iid 
standard $1$-Fr\'echet:
$$
\P[ X^*\le x] = \exp\Big\{ - \sum_{\emptyset \neq J\subset [p]} \frac{\beta(J)}{\min_{i\in J} x_i} \Big\}.
$$
Letting
$
\sigma^*(du) := \sum_{\emptyset \neq J\subset [p]} \beta(J) \delta_{1_J}(du),
$
we obtain that 
\begin{equation}\label{e:beta-sigma}
\int_{S} \Big(\max_{i\in [p]} \frac{u_i}{x_i} \Big) \sigma^*(du) = \sum_{\emptyset \neq J\subset [p]} \frac{\beta(J)}{\min_{i\in J} x_i}.
\end{equation}
This shows that 
$$
\P[X^*\le x] = \exp\{-\mu^*[0,x]^c\} =  \exp\Big\{ - \int_S \Big(\max_{i\in [p]} \frac{u_i}{x_i} \Big) \sigma^*(du)\Big\},
$$ 
which in view of \eqref{e:max-stable-CDF-via-sigma} shows that $\sigma^*$ is the spectral measure of $X^*$, where $\mu^*$
denotes the tail measure of $X^*$.

Let now $\emptyset\not=J\subset[p]$ and define the set
$$
C_J:=\{ u\in S\, :\, u_i = 1\, \mbox{ for all $i\in J$ and $u_j=0$, for all $j\in J^c$ }\}.
$$
We will argue that, with $B_J, A_j$ as in \eqref{e:beta-via-mu},
\begin{equation}\label{e:CJ_BJ}
\sigma^*(C_J) = \mu^*(B_J):= \mu^* \Big( \bigcap_{i\in J} A_i \cap \bigcap_{j\in J^c} A_j^c \Big).
\end{equation}
Indeed, by \eqref{e:sigma-spectral}, we have 
$$
\sigma^*(C_J) = \mu^*(\widetilde C_J):= 
\mu^*\{ x\in \R_+^d\, :\, \|x\|>1,\ x/\|x\|\in C_J\}.
$$
Note that if $x\in \widetilde C_J$, then $x_i=\|x\|>1$ for all $i\in J$, and
$x_j = 0 <1$, for all $j\in J^c$, so that $x\in B_J$.  This means that 
$\widetilde C_J\subset B_J$, and hence $\sigma^*(C_J) \le \mu^*(B_J)$.  

By the construction of $\sigma^*$, on the other hand, we have $\sigma^*(C_J) = \sigma^*(\{1_J\}) = \beta(J)$ and since the $B_J$'s partition the set 
$\{\|x\|>1\}\cap \R_+^p$, we get
$$
\mu^*(\{\|x\|>1\})=\mu^*(\{\|x\|>1\}\cap \R_+^p) = \sum_{J} \mu^*(B_J) \ge \sum_J \sigma^*(C_J) = \sum_{J} \beta(J) = \sigma^*(S),
$$
where the last relation follows from \eqref{e:beta-sigma} by setting $x_i=1, i \in [p]$.
Since $\sigma^*(S) = \mu^*(\{\|x\|>1\})$ we obtain from the above inequality that $\mu^*(B_J) = \sigma^*(C_J)=\beta(J)$, for all $J\subset[p]$.

We have thus shown that the functionals $\beta(J)$ that we started with are indeed the ones which determine the extremal (tail dependence) coefficients of $X^*$ via \eqref{e:theta-lambda-via-beta}. This completes the proof of the
`if'  part.\\

(`only if') Conversely, if $\{\theta(K),\ K\subset[p]\}$ (or $\{\lambda(L),\ L\subset[p]\}$)
are the extremal coefficients (tail-dependence coefficients, respectively) of a max-stable
vector $X$ with tail measure $\mu$.  Then, as already argued above \eqref{e:theta-lambda-mu} holds, and hence the $\beta(J)$'s defined as in \eqref{e:beta-via-mu} are non-negative and 
satisfy Relations
 \eqref{e:beta-via-theta} (or \eqref{e:beta-via-lambda}). This completes the proof.
\end{proof}

\subsection{Proofs for Section \ref{sec:NP}}
\begin{proof}[Proof of Lemma \ref{lem:SDRrange}]
Assume that for an input matrix $d$ for the SDR problem with unconstrained, identical margins the answer is positive, which is equivalent to $d$ being $L^1$-embeddable, see the proof of Theorem~\ref{Th:SDR:is:NPcomplete}. According to Proposition~\ref{p:d-via-cuts} and Lemma~\ref{lemma:d-linear} we can then choose the realizing max-stable vector $X$ as a (generalized) TM-model with coefficients $\beta(J), \emptyset \neq J \subset [p]$, such that
$$ c=\sum_{J \ni i} \beta(J), \;\;\; i \in [p],$$
and
\begin{equation}\label{Eq:d:and:beta}d(i,j) = \sum_{J\, :\, \emptyset\not=J\subset [p]} \beta(J) | 1_{J}(i) - 1_{J}(j)|,\ \ i,j\in [p].\end{equation}
We note that the particular choice of $\beta([p]) \geq 0$ does only affect the value of $c$ and is not determined by \eqref{Eq:d:and:beta} as $|1_{[p]}(i)-1_{[p]}(j)|=0$. Thus, for each realizing max-stable vector $X$ with $\|X_i\|=c$ we can for each $\tilde{c}>c$ find a realizing max-stable vector $X$ with $\|X_i\|=\tilde{c}$ by increasing the value of $\beta([p])$ in the generalized TM-model.

Since all $\beta(J)'s$ are non-negative, \eqref{Eq:d:and:beta} implies furthermore that for all $\emptyset \neq J \subsetneq [p]$ the inequality
$$ \beta(J) \leq \max_{i,j \in [p]}d(i,j)$$
holds. Thus we know that for all $i \in [p]$
$$ c = \sum_{J \ni i} \beta(J)\leq \beta([p])+(2^p-2)\max_{i,j \in [p]}d(i,j)$$
As any choice of $\beta([p])\geq 0$ leads to a realizing (generalized) TM-model for given $d$ we see that any value $c \geq (2^p-2)\max_{i,j \in [p]}d(i,j)$
is possible for the marginal scale of this model. 
\end{proof}

\end{document}